\documentclass{amsart}
\setlength{\textheight}{43pc}
\setlength{\textwidth}{28pc}

\usepackage{amsmath,amssymb,amsthm,amscd,amsopn,amsfonts,amsxtra} 
\usepackage{latexsym,array,marvosym,url}
\usepackage[mathscr]{eucal}
\usepackage{graphicx,psfrag,epsfig,color}

\newtheorem{theorem}{Theorem}[section] 
\newtheorem{lemma}[theorem]{Lemma}

\newtheorem{assumption}[theorem]{Assumption}

\theoremstyle{remark}
\newtheorem{remark}[theorem]{Remark}

\theoremstyle{definition}



\begin{document}

\date{Received in final form February 26, 2011}

\title
{Spectral multipliers for  Schr\"odinger operators} 

\author{Shijun Zheng}

\address{Department of Mathematical Sciences\\
Georgia Southern University\\
Statesboro, GA 30460-8093}
\address{ and}
\address{Department of Mathematics \\
          University of South Carolina  \\
         Columbia, SC 29208\\ 
         \quad}

\email{szheng@GeorgiaSouthern.edu}

\urladdr
{http://math.georgiasouthern.edu/\symbol{126}{szheng}}

\subjclass[2000]
{42B15, 35J10} 

\begin{abstract}
We prove a sharp H\"ormander multiplier theorem for 
\mbox{Schr\"odinger}
operators $H=-\Delta+V$ on $\mathbb{R}^n$.   
 The result is obtained under certain condition on a 
weighted $L^\infty$ estimate, coupled with a weighted $L^2$ {estimate} for $H$, which is a weaker condition than that
 for nonnegative operators via the heat kernel approach. 
Our approach is elaborated in one dimension with  potential $V$ belonging to certain critical 
weighted $L^1$ class. Namely, we assume that \mbox{$\int (1+|x|) |V(x)|dx$} is finite 
and $H$ has no resonance at zero. In the resonance case we assume 
$\int (1+|x|^2) |V(x)| dx$ is finite.
\end{abstract}

\maketitle 

\definecolor{orange}{rgb}{0.995, 0.75, 0.35}
\definecolor{purple}{rgb}{0.7, 0.2, 0.5}
\definecolor{royalblue}{rgb}{0.2, 0.7, 0.8}
\definecolor{darkgreen}{rgb}{0.2,0.725,0.25}
\def\al{\alpha}
\def\de{\delta}
\def\eps{\epsilon}
\def\ga{\gamma}
\def\lam{\lambda}
\def\om{\omega}
\def\veps{\varepsilon}
\def\vphi{\varphi}
\def\De{\Delta}
\def\Ga{\Gamma}
\def\Lam{\Lambda}
\def\Om{\Omega}

\def\sh{\sinh}
\def\ch{\cosh}
\def\th{\tanh}
\def\sech{\mathrm{sech}}
\def\iy{\infty}
\def\inv{^{-1}}
\def\pa{\partial}
\def\supp{\mathrm{supp}\,}
\def\sgn{\textrm{sgn}}

\newcommand{\cal}{\mathcal}
\newcommand{\ud}{\mathrm{d}}
\newcommand{\bvt}{\big\vert}
\newcommand{\mf}{\mathfrak}
\newcommand{\one}{\mathbf{1}}
\newcommand{\la}{\langle}
\newcommand{\ra}{\rangle}
\newcommand{\nd}{\noindent}
\newcommand{\td}{\tilde}
\newcommand{\vs}{\vspace}
\newcommand{\n}{\newline}
\newcommand{\np}{\newpage}
\newcommand{\hB}{\hfill$\Box$}
\newcommand{\hr}{\hookrightarrow}
\newcommand{\da}{\dagger}
\newcommand{\crr}{{\color{red}$\dagger$}}

\newcommand{\rk}{{\bf Remark.}\ \ }
\newcommand{\defi}{{\bf Definition:}\ \ }
\newcommand{\nota}{{\bf Notation:}\ \ }

\newcommand{\Z}{\mathbb{Z}}
\newcommand{\R}{\mathbb{R}}
\newcommand{\C}{\mathbb{C}}
\newcommand{\N}{\mathbb{N}}
\newcommand{\bS}{\mathbb{S}}
\newcommand{\bfR}{\mathbf{R}} 

\section{Introduction}\label{S1}
Let  $H=-\De+V $  be a Schr\"{o}dinger operator on $\R^n$, 
 where $\De=\sum^n_{j=1}\frac{\pa^2}{\pa x^2_j}$ and $V$ is real-valued.  
  In this paper we are concerned with proving a spectral multiplier theorem on $L^p$ spaces for $H$ and
we then consider potentials in some critical class $L^1_1$ in one dimension, 
where $V$ may {\em not} be positive. As is well known, spectral multiplier theorem plays a significant role in 
harmonic analysis and PDEs \cite{A94,C88,He90a,DOS02,D01,CuS01,
Sch05b,OZ08, CGT82, CS88}. 

For a Borel measurable function $\phi$: $\R\to\C$ 
we define $\phi(H)=\int \phi(\lam) dE_\lam$ by functional calculus, where
$H=\int \lam dE_\lam$ is the spectral resolution of the selfadjoint operator $H$ acting in $L^2(\R^n)$. 
The spectral multiplier problem is 
to find sufficient condition on a bounded function $\mu$ on $\R$ (with minimal smoothness) 
so that $\mu(H)$ is bounded on $L^p(\R^n)$, $1<p<\iy$.

In the Fourier case, i.e., $V=0$, H\"ormander \cite{Hor60} essentially proved 
(for radial multipliers) the multiplier theorem 
 on $L^p(\R^n)$, under the condition that the scaling-invariant local Sobolev norm on $\mu$ is finite
for $s>n/2$, 
\begin{align}\label{e:mu-Xs*}
\Vert\mu\Vert_{W^s_{2,sloc}}:= \sup_{t>0}\Vert\mu(t\cdot)\chi\Vert_{W_2^s(\R)}<\iy \,.\end{align} 
Here $\chi\in C^\iy_0(\R\setminus\{0\})$ is a fixed $C^\iy$-smooth function with compact support away from zero 
and $W_2^s$ denotes the usual Sobolev space endowed with the norm 
$\Vert f\Vert_{W^s_2}=\Vert (1-\De)^{s/2}f\Vert_2$. 
The proof in \cite{Hor60}  mainly 
requires that 
the kernel $K_\mu(x,y)$ of $\mu(-\De)$ satisfy
\begin{equation}\label{e:hor-cond}
\int_{|x-\bar{y}|>2|y-\bar{y}|} |K_\mu(x,y)-K_\mu(x,\bar{y})|dx\le C
\end{equation}
for all $y$, $\bar{y}$ (for the weak $(1,1)$ estimate). 
However, the 
regularity condition in (\ref{e:hor-cond}) is invalid for $H$ when $V\neq 0$.

 For $V\ge 0$,  Hebisch \cite{He90a} proved a multiplier theorem with $s>\frac{n+1}{2}$ based on heat kernel estimates.
His  approach 
was essentially to control the low energy part of $\mu(H)$ 
by a pointwise decay of the kernel, see (\ref{e:phi-dec}). 
This  heat kernel approach has been recently developed in proving sharp 
multiplier theorems (with $s>n/2$) in various settings for positive elliptic 
operators on manifolds or metric spaces 
\cite{A94,CM96,DM99}, 
see \cite{DOS02} for a comprehensive survey and the references therein. 

The question remains open for general $V$ 
where the heat kernel estimates may not hold. 
In this paper 
we formulate a H\"ormander type spectral multiplier theorem (Theorem \ref{t:m(H)Lp})
for general $H$ on $\R^n$. We show that Theorem \ref{t:m(H)Lp}
is true if the two weighted estimates in Assumption \ref{a:wei-phi-dec}, 
namely a weighted $L^2$ estimate (in high energy) and an integral  form of pointwise decay estimate
(in low energy), are satisfied for $H$.  
In Sections \ref{s:wei-infty-hi}--\ref{s:wei-L2-lowhi}, we elaborate the approach in one dimension by considering potentials in the 
class 
 $L^1_\ga:=\{ f: \int (1+|x|)^\ga |f(x)| dx<\infty\}$, $\ga=1, 2$.  

For a (continuous) function $\phi$, 
let $\phi(H)(x,y)$ denote the kernel of $\phi(H)$, $x,y\in\R^n$ and let $\lam_j=2^{-j/2}$, $j\in\Z$. 
 By $\phi\in {X}(\Om)$, 
where $\Om\subset \R$ and $X$ is a function space on $\R$,  we mean that $\phi\in {X}$ 
 and has support in $\Om$. Throughout this paper $c$ or $C$ will denote an absolute constant 
 and $\chi_\Om$ the characteristic function on the set $\Om$.

\begin{assumption}\label{a:wei-phi-dec} Assume that $H$ satisfies the following two estimates. 

\textup{(a)} (Weighted $L^2$ estimate)
There exists some $s>n/2$ so that for all $j$ and $\phi\in {W}^{s}_2([\frac{1}{4},1]\cup[-1,-\frac{1}{4}])$,
 \begin{equation}\label{e:wei-L2-ineq}  \sup_y \Vert | x-y|^s \phi(\lam_j^2 H)(x,y)\Vert_{L^2_x} 
\le c\lam_j^{s-n/2}\Vert \phi\Vert_{{W}_2^{s}}  \,.
\end{equation}  

\textup{(b)} (Weighted $L^\iy$ estimate) 
There exist a finite measure $d\zeta$ and $0<\eps\le 1$ so that for all $x,y$, $j$ and $\phi\in W^{n+\eps}_2([-1,1])$,
\begin{equation}\label{e:phi-dec-zeta} 
 |\phi(\lam_j^2 H)(x,y)|\le   c
  \lam_j^{-n} \int_{\R^n} (1+ \lam_j^{-1} |x-y-u|)^{-n-\eps}d\zeta(u)\,,
 \end{equation}where $c=c(\Vert\phi\Vert_{W_2^{n+\eps}})$.
\end{assumption} 

The assumption is intrinsic in the sense that it only depends on $H$ and does not depend on the multiplier
$\mu$. Note that when $V\ge 0$, Hebisch \cite{He90a} essentially used in the proof the following pointwise decay 
\begin{equation}\label{e:phi-dec} 
 |\phi(\lam_j^2 H)(x,y)|\le   c \lam_j^{-n} (1+ \lam_j^{-1} |x-y|)^{-n-\eps} , 
\end{equation}
which is implied by the upper Gaussian bound for $e^{-tH}(x,y)$. 
Assumption \ref{a:wei-phi-dec} (b) is a much 
weaker condition than (\ref{e:phi-dec}). 
When $V$ is negative, the decay in (\ref{e:phi-dec}) 
does not hold, not even for $V$ being a Schwartz function, cf. \cite{OZ06,Z08}.

\begin{theorem}\label{t:m(H)Lp} Suppose $H$ satisfies Assumption \ref{a:wei-phi-dec} for some $s>n/2$.  
If $\Vert \mu\Vert_{W^s_{2,sloc}}<\iy$, 
then $\mu(H)$ is bounded on $L^p(\R^n)$, $1<p<\infty$ 
 and has weak type $(1,1)$.
\mbox{Moreover,} 
\begin{equation}\label{e:mu-w11-scal}
\Vert \mu(H)\Vert_{L^1\to weak\text{-}L^1} \le c\Vert\mu\Vert_{W^s_{2,sloc}} \,. 
\end{equation} 
\end{theorem}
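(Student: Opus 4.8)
The plan is to reduce everything to the weak-type $(1,1)$ bound (\ref{e:mu-w11-scal}). Once that is in hand, the full range $1<p<\iy$ follows by Marcinkiewicz interpolation with the elementary $L^2$ estimate
\[
\Vert\mu(H)\Vert_{2\to2}=\Vert\mu\Vert_{L^\iy(\sigma(H))}\le c\Vert\mu\Vert_{W^s_{2,sloc}},
\]
which is valid because the Sobolev embedding $W^s_2(\R)\hr L^\iy(\R)$ holds for $s>n/2\ge\frac12$, so that each rescaled piece $\phi_j$ below is uniformly bounded; the range $2<p<\iy$ is then obtained by duality, using $\mu(H)^*=\bar\mu(H)$ together with $\Vert\bar\mu\Vert_{W^s_{2,sloc}}=\Vert\mu\Vert_{W^s_{2,sloc}}$. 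Thus the whole weight of the theorem rests on the endpoint.

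I would first fix a dyadic partition $\psi\in C^\iy_0([\tfrac14,1]\cup[-1,-\tfrac14])$ with $\sum_{j\in\Z}\psi(\lam_j^2\lam)=1$ for $\lam\ne0$, and set $\mu_j(\lam)=\mu(\lam)\psi(\lam_j^2\lam)$, so that $\mu(H)=\sum_j\mu_j(H)$ with $\mu_j(H)=\phi_j(\lam_j^2H)$ and $\phi_j(\eta)=\mu(\lam_j^{-2}\eta)\psi(\eta)$ supported in $[\tfrac14,1]\cup[-1,-\tfrac14]$. Scaling invariance of the norm yields the crucial uniform bound $\sup_j\Vert\phi_j\Vert_{W^s_2}\le cM$, where $M:=\Vert\mu\Vert_{W^s_{2,sloc}}$; this is exactly the input that Assumption \ref{a:wei-phi-dec} consumes. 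Next, apply a Calder\'on--Zygmund decomposition of $f\in L^1\cap L^2$ at height $\al$: $f=g+\sum_Q b_Q$ with $\Vert g\Vert_\iy\le c\al$, each $b_Q$ supported in a cube $Q$ of side $r_Q$ with $\int b_Q=0$ and $\sum_Q\Vert b_Q\Vert_1\le c\Vert f\Vert_1$, and $\sum_Q|Q|\le c\al^{-1}\Vert f\Vert_1$. The good part is handled by Chebyshev and the $L^2$ bound, $|\{|\mu(H)g|>\al\}|\le c\al^{-2}\Vert g\Vert_\iy\Vert g\Vert_1\le cM^2\al^{-1}\Vert f\Vert_1$. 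Letting $Q^*$ be a fixed dilate and $\Om^*=\bigcup_Q Q^*$, with $|\Om^*|\le c\al^{-1}\Vert f\Vert_1$, the problem reduces, via the mean-zero property $\int b_Q=0$, to the integral H\"ormander-type estimate
\[
\sup_{y\in Q}\int_{|x-y_Q|>cr_Q}|K_\mu(x,y)-K_\mu(x,y_Q)|\,dx\le cM,\qquad K_\mu=\sum_j K_j,\ \ K_j(x,y)=\phi_j(\lam_j^2H)(x,y).
\]

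I would then split the $j$-sum at the cube scale: $\lam_j\le r_Q$ (high energy, kernel concentrated at scale $\lam_j$) against $\lam_j\ge r_Q$ (low energy, kernel spread over scale $\lam_j$). In the high-energy range no cancellation is needed: Cauchy--Schwarz and the weighted $L^2$ estimate (\ref{e:wei-L2-ineq}) give, for $y\in Q$,
\[
\int_{|x-y|>cr_Q}|K_j(x,y)|\,dx\le\Big(\int_{|x-y|>cr_Q}|x-y|^{-2s}\,dx\Big)^{1/2}\big\Vert\,|x-y|^s K_j(x,y)\big\Vert_{L^2_x}\le cM\,(\lam_j/r_Q)^{s-n/2},
\]
and since $s>n/2$ the geometric series $\sum_{\lam_j\le r_Q}(\lam_j/r_Q)^{s-n/2}$ sums to $\le c$. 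In the low-energy range, the pointwise decay (\ref{e:phi-dec-zeta}) only integrates to $\int_{\R^n}|K_j(x,y)|\,dx\le c$ uniformly in $j$, so the sum diverges unless cancellation is used; exploiting $\int b_Q=0$ should produce the Lipschitz gain $|y-y_Q|/\lam_j\lesssim r_Q/\lam_j$, after which $\sum_{\lam_j\ge r_Q}r_Q/\lam_j\le c$ closes the bound.

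The hard part is precisely this low-energy regularity step. Both hypotheses are \emph{size} bounds (weighted $L^2$ in (a), pointwise in (b)), so the needed smoothness of $K_j$ in its second variable is not given directly; the crux is to upgrade (\ref{e:phi-dec-zeta}) to the first-order increment bound
\[
|K_j(x,y)-K_j(x,y_Q)|\le c\,\frac{|y-y_Q|}{\lam_j}\,\lam_j^{-n}\int_{\R^n}\big(1+\lam_j^{-1}|x-y_Q-u|\big)^{-n-\eps}\,d\zeta(u)
\]
for $|y-y_Q|\le\lam_j$. I expect this to require feeding the hypotheses the rescaled symbols $\eta\mapsto\eta^{1/2}\phi_j(\eta)$ (or composing $\phi_j$ with the group generated by $H$), converting the second-variable increment back into a size bound for a spectral localization at the same scale. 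One must also reconcile the two smoothness orders, $s>n/2$ entering through (\ref{e:wei-L2-ineq}) versus $n+\eps$ entering through (\ref{e:phi-dec-zeta}); because the low-energy sum converges with room to spare, a mild additional mollification of the low-energy pieces should be harmless and permit the application of (b) even when $s<n+\eps$.
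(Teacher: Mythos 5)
Your overall architecture---reduction to the weak type $(1,1)$ endpoint via $L^2$-boundedness, interpolation and duality, the Calder\'on--Zygmund decomposition, and the treatment of the high-energy pieces by Cauchy--Schwarz against the weighted $L^2$ estimate (\ref{e:wei-L2-ineq}) with the geometric series in $(\lam_j/r_Q)^{s-n/2}$---coincides with the paper's proof (Lemma \ref{l:j-horm-cond}). The genuine gap is exactly where you flag it: the low-energy step. Your plan is to upgrade the size bound (\ref{e:phi-dec-zeta}) to the first-order increment bound
\[
|K_j(x,y)-K_j(x,y_Q)|\le c\,\frac{|y-y_Q|}{\lam_j}\,\lam_j^{-n}\int_{\R^n}\big(1+\lam_j^{-1}|x-y_Q-u|\big)^{-n-\eps}\,d\zeta(u),
\]
and this cannot be done: both parts of Assumption \ref{a:wei-phi-dec} are pure size bounds on kernels, and no Lipschitz information in the second variable can be extracted from them; regularity of $\phi(\lam_j^2H)(x,y)$ in $y$ would require control of $\nabla$ composed with spectral projections of $H$, which is a strictly stronger hypothesis (and, as the introduction notes, the H\"ormander regularity condition (\ref{e:hor-cond}) actually fails for $H$ when $V\neq 0$). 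Your fallback of feeding rescaled symbols such as $\eta\mapsto\eta^{1/2}\phi_j(\eta)$ into the hypotheses only yields further size bounds of the same kind and does not convert a $y$-increment into a spectral localization.

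The paper escapes this by the Hebisch/Duong--McIntosh device, which never uses the mean-zero property of the bad functions at all. Writing $\mu(H)b_k=\mu(H)(1-\Phi_{j_k}(H))b_k+\mu(H)\Phi_{j_k}(H)b_k$ with $2^{-j_k}=\ell(I_k)^2$, the first term is your high-energy estimate (size only, no cancellation). For the second term one does \emph{not} expand $\mu$ dyadically and does not difference any kernel; instead one invokes the $L^2$-boundedness of $\mu(H)$ and proves the single estimate
\[
\Big\Vert\sum_k\Phi_{j_k}(H)b_k\Big\Vert_{L^2}^2\le c\,\al\,\Vert f\Vert_1 ,
\]
which follows from Assumption (b) alone via Lemma \ref{l:Phi-iint-dec}: averaging the bound (\ref{e:phi-dec-zeta}) over the cube $I_k$ dominates $|\Phi_{j_k}(H)b_k|$ by $c\al$ times (an approximate identity at scale $\ell(I_k)$) convolved with $\chi_{I_k}$ and $d\zeta$, using the Calder\'on--Zygmund property $|I_k|^{-1}\Vert b_k\Vert_1\le c\al$; pairing against $h\in L^2$, the bound $\sup_t|\rho_t*h|\le M_{HL}h$ and $\Vert\sum_k\chi_{I_k}\Vert_2=(\sum_k|I_k|)^{1/2}\le(c\al^{-1}\Vert f\Vert_1)^{1/2}$ close the estimate by duality. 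Here the averaging property of the cubes replaces cancellation, and the smoothness mismatch you worried about (between $s$ and $n+\eps$) never arises, because Assumption (b) is only ever applied to the fixed cutoff $\Phi\in C^\iy([-1,1])$, not to the multiplier $\mu$.
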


That the critical exponent $\frac{n}{2}$ is sharp is well-known in the literature \cite{
C91,Se89,DOS02}. Note that the condition in (\ref{e:mu-Xs*}) implies $\mu\in L^\iy$ 
by Sobolev embedding 
\begin{equation}\label{e:mu-infty-Ws}
\Vert \mu\Vert_\iy\le c\Vert \mu\Vert_{W^s_{2,sloc}}
\end{equation}
whenever $s>1/2$. Also, note that 
one has an equivalent norm
for  $\Vert\cdot\Vert_{W^s_{2,sloc}}$ 
if in (\ref{e:mu-Xs*}) $\chi$ is replaced with any other $\vphi$ in $C^\iy_0(\R\setminus\{0\})$.

\begin{remark} From the proof given in Section \ref{s:pf-w11} we easily observe that
Theorem \ref{t:m(H)Lp} 
actually holds for any self-adjoint operator $L$ in place of $H$ 
that satisfies Assumption \ref{a:wei-phi-dec} (a)  and\\ 
(b') There exist $d\zeta_k\in M$, $k\in\Z$, $M$ the set of finite measures, 
 with $0<\eps\le 1$ and
$\sum_k\Vert \zeta_k\Vert_M<\iy $, so that for all $x,y,j$ 
\begin{equation}\label{e:phi-dec-rhoj-zetaj} 
 |\Phi(\lam_j^2 L)(x,y)|\le   
 c\sum_{k,\pm} \lambda_k^{-n}(1+ {\lambda_k}^{-1} |\cdot|)^{-n-\eps} *d\zeta_k(\pm x\pm y),
 \end{equation}
where $\Phi\in C^\iy([-1,1])$ is given as in (\ref{e:Phi-phij}),
 $f*d\zeta(x)=\int f(x-u)d\zeta(u)$ is the usual convolution. 
\end{remark}
 
Applying Theorem \ref{t:m(H)Lp} to the one dimensional $H_V:=-d^2/dx^2+V$, we obtain the following
theorem.
\begin{theorem}\label{t:m(H)-V-Lp} Suppose $V$ is in $L^1_1(\R)$ and assume that there is no resonance at zero.  If for some $s>1/2$,
$\Vert \mu\Vert_{W^s_{2,sloc}}$ is finite, 
then the conclusions of Theorem  \ref{t:m(H)Lp} hold. 
Furthermore, the conclusions also hold true for all $V\in L^1_2(\R)$. 
\end{theorem}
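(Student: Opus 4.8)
The plan is to deduce Theorem \ref{t:m(H)-V-Lp} from the abstract Theorem \ref{t:m(H)Lp} by verifying that the one-dimensional operator $H_V=-d^2/dx^2+V$ satisfies Assumption \ref{a:wei-phi-dec} with $n=1$ and some $s>1/2$; once (a) and (b) are in hand, Theorem \ref{t:m(H)Lp} applies verbatim. The tool that makes this feasible is the scattering theory of $H_V$ for $V\in L^1_1$: the spectrum consists of finitely many negative eigenvalues together with the absolutely continuous part $[0,\iy)$, and the a.c.\ part is diagonalized by a distorted Fourier transform built from the Jost solutions $f_\pm(x,\lam)=e^{\pm i\lam x}m_\pm(x,\lam)$. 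The bound states contribute only a finite-rank operator with smooth, exponentially decaying kernel, which trivially obeys the weighted $L^2$ estimate and, in (b), can be absorbed into $d\zeta$ as a smooth finite-mass density; so the whole analysis reduces to the a.c.\ part, whose kernel I would write as $\phi(\lam_j^2 H_V)_{ac}(x,y)=\frac{1}{2\pi}\int_0^\iy \phi(\lam_j^2\lam^2)\sum_\pm \psi_\pm(x,\lam)\overline{\psi_\pm(y,\lam)}\,d\lam$ in terms of the physical scattering states $\psi_\pm$.

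The key structural step is to substitute the Jost representation into $\psi_\pm$ and peel off the free part. Writing $m_\pm=1+(m_\pm-1)$ and expanding the transmission and reflection coefficients, I would split the kernel into a leading term in which $\psi_\pm$ is replaced by a pure exponential -- this term is exactly a rescaled free kernel $\phi(\lam_j^2(-\De))(x,y)$, controlled by the classical Fourier-multiplier bounds -- plus correction terms each carrying a factor $m_\pm-1$ or a reflection coefficient $R_\pm$. The correction factors are then expressed through the Volterra integral equation for $m_\pm$, which bounds $m_\pm(x,\lam)-1$ and its $\lam$-derivatives by $\int_x^{\pm\iy}(1+|t|)|V(t)|\,dt$ and supplies the convolution-against-$V$ structure that will ultimately produce the measure $d\zeta$.

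For the high-energy weighted $L^2$ estimate (a), $\phi$ is supported in $[\frac14,1]\cup[-1,-\frac14]$, so after the change of variable $E=\lam^2$ the weight $|x-y|^s$ can be traded, via $s$-fold integration by parts in $\lam$, for $s$ derivatives of $\phi$. The uniform bounds $\sup_{x,\lam}|m_\pm(x,\lam)|\le C$ and the control of $\pa_\lam^k m_\pm$ by $\|V\|_{L^1_1}$ let me reproduce the free weighted-$L^2$ estimate for each term, and the homogeneity of the rescaling yields the factor $\lam_j^{s-1/2}$ demanded in (\ref{e:wei-L2-ineq}). Since this is the high-energy regime the potential acts as an essentially smooth perturbation, so no resonance issues arise here.

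The heart of the matter, and the expected main obstacle, is the low-energy weighted $L^\iy$ estimate (b). Here $\phi$ is supported in $[-1,1]$, so all energies up to $\lam_j^{-2}$ contribute and $V$ is genuinely not a small perturbation. Inserting the Volterra representation of $m_\pm-1$ into the kernel and carrying out the $\lam$-integration, one obtains -- after the scaling $\lam\mapsto\lam_j\lam$ -- convolutions of the free profile $\lam_j^{-1}(1+\lam_j^{-1}|\cdot|)^{-1-\eps}$ against a fixed measure $d\zeta$ built from $(1+|t|)|V(t)|\,dt$ together with point masses from the scattering matrix, matching exactly the right side of (\ref{e:phi-dec-zeta}). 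The no-resonance hypothesis enters through the behaviour of the transmission coefficient $T(\lam)=2i\lam/W(\lam)$ as $\lam\to0$: when there is no resonance one has $W(0)\neq0$, hence $T(\lam)=O(\lam)$, and this extra vanishing at zero energy is what keeps the low-energy integral convergent and $d\zeta$ of finite total mass under the sole hypothesis $V\in L^1_1$. In the exceptional (resonance) case $W(0)=0$ and $T(0)\neq0$, the zero-energy singularity is worse by one power of $\lam$, and one must then require $\int(1+|x|^2)|V|\,dx<\iy$, i.e.\ $V\in L^1_2$, to restore the finiteness of $d\zeta$; this is precisely what forces the two hypotheses in the statement.
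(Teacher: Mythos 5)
You follow the paper's skeleton --- reduce to $H_{ac}$, write the kernel through $m_\pm$, $t$, $r_\pm$, and verify Assumption \ref{a:wei-phi-dec} so that Theorem \ref{t:m(H)Lp} applies --- but two of the steps you rely on would fail as stated. First, your verification of (a) rests on the claim $\sup_{x,\lam}|m_\pm(x,\lam)|\le C$ together with uniform control of $\pa_\lam m_\pm$. This is false: the correct bounds are one-sided, $|m_\pm(x,k)|\le c(1+\max(0,\mp x))$ and $|\dot m_\pm(x,k)|\le c(1+\max(0,\mp x))/|k|$ (Lemma \ref{l:der-m+}), so on the ``wrong'' half-line $m_\pm$ grows linearly in $|x|$, uniformly in $k$. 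This is exactly why the paper splits into the cases $x>0>y$, $x>y>0$, $y<x<0$, uses the relations $t(k)m_\mp(x,k)=e^{\pm 2ikx}r_\pm(k)m_\pm(x,k)+m_\pm(x,-k)$ (Lemma \ref{l:t-m+m-}) to trade wrong-sided factors for reflection coefficients, and, in the case $y<x<0$ --- where the weight $x-y$ is incompatible with the phases $x+y-2u$ that this substitution produces --- must integrate by parts term by term in a Volterra expansion of $m_-(x,-k)$ (Case c of Lemma \ref{l:wei-L2-mu-lo}); your plan has no substitute for this, the paper's most delicate step. Moreover, (a) is required for \emph{every} $j\in\Z$, i.e.\ at every dyadic scale, not only at high energy: for $j\to-\iy$ one needs $\dot t(k),\dot r_\pm(k)=O(1/k)$ as $k\to 0$, which holds precisely when $\nu=W(0)\neq 0$ or $V\in L^1_2$ (Lemma \ref{l:trm-asym-low}(b)); with a resonance and only $V\in L^1_1$ the available bound is $\dot t(k)=O(1/k^2)$ near zero, and the factor $c\lam_j^{s-1/2}$ demanded in (\ref{e:wei-L2-ineq}) is lost. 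So your statement that ``no resonance issues arise'' in part (a) is wrong. (Also, ``$s$-fold integration by parts'' is not literal for fractional $s$; the paper obtains $1/2<s<1$ by interpolating the endpoint cases $s=0$ and $s=1$, Lemmas \ref{l:j-ker-L2} and \ref{l:wei-L2-mu-lo}.)

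Second, the mechanism you propose for the low-energy part of (b) cannot work. Expanding $t$, $r_\pm$ through the Volterra/Neumann series --- a geometric series in $(2ik)^{-1}(\nu_0+\hat b(k))$ --- is legitimate only for $|k|>k_0(\Vert V\Vert_{L^1_1})$; that is the paper's \emph{high-energy} Lemma \ref{l:high-al-t-r}, and at low energy the series diverges, so ``inserting the Volterra representation and carrying out the $\lam$-integration'' produces no convolution structure there. The missing idea is Wiener's lemma (Lemmas \ref{l:Wiener} and \ref{l:Wiener-Stein}): when $\nu\neq 0$ one has $W(k)\neq 0$ for all $k$, the localized Wronskian $\chi W$ is the Fourier transform of an $L^1$ function and is nonvanishing on $\supp\Psi_{j_0}$, and dividing $-2ik\Psi_{j_0}(k)$ by $\chi(k)W(k)$ in the Wiener algebra shows that $\Psi_j t$ and $\Psi_j r_\pm$ are Fourier transforms of measures of the form $c\de+L^1$, uniformly for $j<j_0$ (Lemma \ref{l:low-t-r}); this is what feeds the low-energy kernel bound of Lemma \ref{l:low-ker-dec-mu}. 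Relatedly, your account of the dichotomy is backwards: in the resonance case $t(0)\neq 0$, so there is no ``zero-energy singularity worse by one power of $\lam$''; the difficulty is rather that $W(0)=0$ forbids the Wiener division, and one must instead exploit the cancellation $\nu=0$ in $t(k)^{-1}-1=-\int V(t)\,dt\int_0^\iy B_+(t,y)\,\frac{e^{2iky}-1}{2ik}\,dy$, where the factor $\frac{e^{2iky}-1}{2ik}$ costs a weight $|y|$ that, through Lemma \ref{l:B-L1}(b), is exactly what requires the extra moment $V\in L^1_2$.
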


A typical example for $\mu$ is  $\mu_\ga(\xi)=|\xi|^{i\ga}$, $\ga\in \R$. Hence 
$H_V^{i\ga}$ is bounded on $L^p$, $1<p<\iy$ and maps $L^1$ to weak-$L^1$. 

Let $\{\Phi, \varphi_j\}\in C_0^\infty ({\R}) $ be a dyadic system satisfying 
$\;  \supp \;\Phi \subset \{ x: |x|\le 1\}$, $\supp\; \varphi
\subset \{ x: \frac14\le |x|\le 1\} $ and 
\begin{align}
&\sum_{j=-\infty}^\infty \varphi_j(x) =1,    \qquad \forall x\neq 0,     \label{eq:dyadic-id}\\
&\Phi(x)  +\sum_{j=1}^\infty \varphi_j(x) =1,    \quad   \forall x , \label{e:Phi-phij}
\end{align}
where $\varphi_j(x) =\varphi( 2^{-j} x)$, and note that $\Phi(x)\equiv 1$ on $[-\frac12,\frac12]$.

Using the dyadic system above we will make the high and low energy cutoffs of $\mu(H)$ in the proof of 
Theorem \ref{t:m(H)Lp}.
As in \cite{JN94,Z06b,DP05}, 
we can also define  $B^{\al,q}_p(H)$ and $F^{\al,q}_p(H)$,
the Besov spaces and Triebel-Lizorkin spaces associated with $H$.  
We can show that the sharp spectral multiplier theorem also hold on these spaces, see the statement in 
\mbox{Theorem \ref{t:m(H)-B-F}.}  

\subsection{Weighted estimates for the kernel of $\phi(2^{-j}H_V)$} Let $V\in L^1_1(\R)$ and assume $0$ is not a resonance or let $V\in L^1_2(\R)$ in general. From \cite{DT79} or Section \ref{s:wei-infty-hi},  $H_V$ has resonance at 0 means that the Wronskian 
vanishes at $0$, i.e., $\nu:=W(0)=0$.
From \mbox{Theorem \ref{t:m(H)Lp}} and the remark that follows we know that 
the main technical difficulty in proving Theorem \ref{t:m(H)-V-Lp} is to verify the two weighted estimates in 
Assumption \ref{a:wei-phi-dec} (a), (b').

 The proofs of (\ref{e:wei-L2-ineq}) and (\ref{e:phi-dec-rhoj-zetaj})  for $H_V$  require some new and refined formulas and asymptotic estimates
for $m_\pm(x,k)$, the modified Jost functions,  and $t(k)$, $r_\pm(k)$, the associated transmission and reflection coefficients.
The main tools are Volterra integral equations for $m_\pm(x,k)$ as well as its Fourier transforms. 
These are motivated by and developed from the treatment in \cite{DT79}.

For the $L^\iy$ estimates in (\ref{e:phi-dec-rhoj-zetaj}) for  
the low energy, we use  Wiener's lemma 
 in order to prove the existence of finite measures $d\zeta_k$, which are
actually $L^1$ functions up to a delta measure, see \cite{GS04} for a similar treatment when
considering the dispersive estimates for $H_V$.

For the $L^2$ estimates in (\ref{e:wei-L2-ineq}) for the high energy, we prove (\ref{e:wei-L2-ineq}) for $1/2<s<1$ by interpolating
between the cases $s=0$ and $s=1$, which can be viewed as Plancherel formula for $D^s\phi_j$ with respect to 
the Fourier transform associated to $H_V$. 

The remaining of the paper is organized as follows. In Section 2 we prove the weak (1,1) estimate
for general $H$ under the hypothesis in Assumption \ref{a:wei-phi-dec}.
Sections \ref{s:wei-infty-hi} to \ref{s:wei-L2-lowhi} are devoted to the proof of Theorem \ref{t:m(H)-V-Lp}, which is
quite long verification of the estimates in (\ref{e:wei-L2-ineq}) and (\ref{e:phi-dec-rhoj-zetaj})
in one dimension. 
In certain cases it involves 
delicate and subtle technicalities. 

\section{Proof of weak-(1,1) boundedness}\label{s:pf-w11} 

In this section we mainly give the proof of Theorem \ref{t:m(H)Lp}. Since $\mu\in L^\iy$, $\mu(H)$ is bounded on $L^2$.
Hence by interpolation and duality it is sufficient to show that $\mu(H)$ has weak type (1,1), which will follow from Lemma \ref{l:j-horm-cond},
 Lemma \ref{l:Phi-iint-dec} and Calder\'on-Zygmund decomposition. 
The proof is a modification of the arguments in \cite{He90a} and \cite{DM99}.
Let $\{\Phi,\vphi_j\}$ be as in (\ref{eq:dyadic-id}), (\ref{e:Phi-phij}). 
Write $\mu_j=\mu\vphi_j$,  $\Phi_j(x)=\Phi(2^{-j}x)$, $j\in\Z$. 
\begin{lemma} \label{l:j-horm-cond} Let $H$ satisfy Assumption \ref{a:wei-phi-dec} (a) with $s>n/2$. 
  Let $y\in I$, $I\subset\R^n$ a cube with length $t=\ell(I)=2^{-j_I/2}$, $j_I\in\Z$.
Then\\
(a) For all $j\ge j_I$, 
\[\int_{|x-y|\ge 2t} | \big(\mu_j(1-\Phi_{j_I})\big)(H)(x,y)| dx \le c(2^{j/2}t)^{\frac{n}{2}-s} 
\Vert \mu\Vert_{W^s_{2,sloc}}\,.\]
(b)\[
\int_{|x-y|\ge 2t} \sum_{j=-\infty}^\infty| \big(\mu_j(1-\Phi_{j_I})\big)(H) (x,y)| dx \le c\Vert \mu\Vert_{W^s_{2,sloc}}\,.
\]
\end{lemma}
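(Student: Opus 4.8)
The plan is to handle each dyadic piece $\psi_j:=\mu_j(1-\Phi_{j_I})$ separately by rescaling it to the unit dyadic shell so that Assumption \ref{a:wei-phi-dec}(a) applies, then to convert the resulting weighted $L^2_x$ bound into the unweighted $L^1_x$ tail bound by Cauchy--Schwarz (losing exactly an integrable power of $t$, which is where $s>n/2$ enters), and finally to sum a geometric series to obtain (b).

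For (a), I would first record the scaling identity. Setting $\phi(\sigma):=\psi_j(2^j\sigma)$, functional calculus gives $\phi(\lam_j^2 H)=\phi(2^{-j}H)=\psi_j(H)$; moreover, since $\vphi_j(2^j\sigma)=\vphi(\sigma)$, we have $\phi(\sigma)=\mu(2^j\sigma)\,\vphi(\sigma)\,(1-\Phi(2^{j-j_I}\sigma))$, which is supported in $[\tfrac14,1]\cup[-1,-\tfrac14]$. Hence Assumption \ref{a:wei-phi-dec}(a) yields
\[\sup_y\big\Vert |x-y|^s\,\psi_j(H)(x,y)\big\Vert_{L^2_x}\le c\,\lam_j^{s-n/2}\Vert\phi\Vert_{W^s_2}.\]
I would then insert the weight $|x-y|^s$ and apply Cauchy--Schwarz:
\begin{align*}
\int_{|x-y|\ge 2t}|\psi_j(H)(x,y)|\,dx
&\le\Big(\int_{|x-y|\ge 2t}|x-y|^{-2s}\,dx\Big)^{1/2}\\
&\quad\times\big\Vert |x-y|^s\,\psi_j(H)(x,y)\big\Vert_{L^2_x}.
\end{align*}
The first factor is $\sim t^{n/2-s}$, and it is precisely the hypothesis $s>n/2$ that makes $\int_{|x-y|\ge 2t}|x-y|^{-2s}dx$ converge. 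Since $t=\lam_{j_I}$, the product $t^{n/2-s}\lam_j^{s-n/2}$ equals $(2^{j/2}t)^{n/2-s}$, the claimed power.

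It remains to bound $\Vert\phi\Vert_{W^s_2}$ by $\Vert\mu\Vert_{W^s_{2,sloc}}$ uniformly in $j$. By scale invariance together with the noted equivalence of the local Sobolev norm under a change of the bump function, $\Vert\mu(2^j\cdot)\vphi\Vert_{W^s_2}\le c\Vert\mu\Vert_{W^s_{2,sloc}}$, so the only issue is the extra factor $g_j(\sigma):=1-\Phi(2^{j-j_I}\sigma)$. On $\supp\vphi$ this factor is identically $1$ as soon as $j-j_I\ge 3$ (since then $2^{j-j_I}|\sigma|\ge 2>1$ there), while for the remaining values $j-j_I\in\{0,1,2\}$ it is one of finitely many fixed smooth functions with bounded derivatives. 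Multiplication by $g_j$ is therefore bounded on $W^s_2$ uniformly in $j$. I expect this fractional product estimate---a smooth symbol with uniformly bounded norms acting on $W^s_2$ for non-integer $s$---to be the one delicate point, requiring a Leibniz-type bound; everything else is bookkeeping of scales together with the $s>n/2$ integrability.

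For (b), I would first note the support restriction: $\vphi_j$ lives in $\{2^{j-2}\le|\sigma|\le 2^j\}$ whereas $1-\Phi_{j_I}$ vanishes on $\{|\sigma|\le 2^{j_I-1}\}$, so $\psi_j\equiv 0$ whenever $j\le j_I-1$ (for $j=j_I-1$ the two supports meet only where $1-\Phi_{j_I}=0$). Thus only $j\ge j_I$ contribute, and summing the estimate from (a) over these $j$, with $m:=j-j_I$ and $2^{j/2}t=2^{m/2}$, gives
\begin{align*}
&\sum_{j\ge j_I}\int_{|x-y|\ge 2t}|\psi_j(H)(x,y)|\,dx\\
&\qquad\le c\Vert\mu\Vert_{W^s_{2,sloc}}\sum_{m\ge 0}2^{m(n/2-s)/2}.
\end{align*}
The series converges exactly because $s>n/2$, which yields the constant bound asserted in (b).
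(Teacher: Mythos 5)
Your proposal is correct and follows essentially the same route as the paper's (very terse) proof: rescale $\mu_j(1-\Phi_{j_I})$ to the unit shell, apply Assumption \ref{a:wei-phi-dec}(a) together with the Cauchy--Schwarz splitting $|x-y|^{-s}\cdot|x-y|^s$ (with $s>n/2$ making the tail integral converge), and deduce (b) from the support observation that $\td{\mu}_j\equiv 0$ for $j\le j_I-1$ plus a geometric series. The only difference is that you spell out the details the paper leaves implicit, notably the uniform bound $\Vert\mu(2^j\cdot)\vphi(1-\Phi(2^{j-j_I}\cdot))\Vert_{W^s_2}\le c\Vert\mu\Vert_{W^s_{2,sloc}}$, which you handle correctly.
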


\begin{proof} Inequality (a) is consequence of Assumption \ref{a:wei-phi-dec} (a) and Schwarz inequality. 
Let $\td{\mu}_j={\mu}_j(1-\Phi_{j_I})$.  
  We have for $s>n/2$, $j\ge j_I$, 
\begin{align*}
&\int_{|x-y|\ge 2t}  |\td{\mu}_j(H)(x,y)| dx \\
=&\int_{|x-y|\ge 2t}|x-y|^{-s} |x-y|^s  |\td{\mu}_j(H)(x,y)| dx \\
\le&c_{n,s}(2^{j/2}t)^{n/2-s} \Vert \mu\Vert_{W^s_{2,sloc}}\,. 
\end{align*}
 
(b) is an easy consequence of (a). Note that since $\supp\vphi_j\subset \{2^{j-2}\le|\xi|\le2^j\}$ 
and  $\supp(1-\Phi_{j_I})\subset \{|\xi|\ge 2^{j_I-1}\}$, 
it follows that $\td{\mu}_j(\xi)=0$ if $j\le j_I-1$.
\end{proof}

\begin{lemma}\label{l:Phi-iint-dec} 
Let $H$ satisfy Assumption \ref{a:wei-phi-dec} (b) with some finite measure $d\zeta$
and $\eps\in (0,1]$. 
Let $y\in I$, $I$ a cube with length $t=\ell(I)= 2^{-j_I/2}$, $j_I\in\Z$ 
and volume $|I|$. Then for all $x$ and all $y\in I$
\begin{align*}  |\Phi_{j_I}(H)(x,y)|\le   c |I|\inv \int_{u\in\R^n} \int_{z\in I} 2^{j_In/2} (1+2^{j_I/2}|x-z-u|)^{-n-\eps} dz d\zeta(u)\,.
 \end{align*}
\end{lemma}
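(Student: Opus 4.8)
The plan is to apply Assumption \ref{a:wei-phi-dec}(b) directly to the test function $\phi=\Phi$ and then pass from the fixed point $y$ to an average over the cube $I$. First I would note that, by the definitions $\lam_j=2^{-j/2}$ and $\Phi_{j_I}(x)=\Phi(2^{-j_I}x)$, one has $\Phi_{j_I}(H)=\Phi(\lam_{j_I}^2 H)$ with $\lam_{j_I}=2^{-j_I/2}=t$. Since $\Phi\in C^\iy_0$ with $\supp\Phi\subset\{|x|\le 1\}$, it certainly lies in $W^{n+\eps}_2([-1,1])$, so Assumption \ref{a:wei-phi-dec}(b) applies with $\phi=\Phi$ and $j=j_I$ and yields the pointwise bound
\[
|\Phi_{j_I}(H)(x,y)|\le c\,2^{j_In/2}\int_{\R^n}(1+2^{j_I/2}|x-y-u|)^{-n-\eps}\,d\zeta(u),
\]
valid for every $x$ and the given $y$, where $c=c(\Vert\Phi\Vert_{W^{n+\eps}_2})$.

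The remaining and only substantive step is to replace the fixed centre $y$ by an average over $z\in I$. The key observation is that both $y$ and $z$ lie in the cube $I$ of side $t=2^{-j_I/2}$, so $|y-z|\le\sqrt{n}\,t$, while the dyadic scaling is tuned so that $2^{j_I/2}t=1$. Hence from the triangle inequality $|x-z-u|\le|x-y-u|+\sqrt{n}\,t$ I would deduce that the weights centred at $y$ and at $z$ are comparable,
\[
1+2^{j_I/2}|x-y-u|\le(1+\sqrt{n})\,(1+2^{j_I/2}|x-z-u|),
\]
uniformly in $x$, $u$ and $z\in I$. Raising this to the power $-(n+\eps)$ gives the bound $(1+2^{j_I/2}|x-y-u|)^{-n-\eps}\le(1+\sqrt{n})^{n+\eps}(1+2^{j_I/2}|x-z-u|)^{-n-\eps}$, and since the left-hand side does not depend on $z$ I may average it over $I$:
\[
(1+2^{j_I/2}|x-y-u|)^{-n-\eps}\le c\,|I|\inv\int_{z\in I}(1+2^{j_I/2}|x-z-u|)^{-n-\eps}\,dz.
\]

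Finally I would multiply this inequality by $2^{j_In/2}$, integrate against the finite measure $d\zeta(u)$, and interchange the order of integration by Fubini--Tonelli (legitimate since the integrand is nonnegative) to arrive at the claimed estimate. I expect no real obstacle here beyond bookkeeping: the whole argument is the combination of Assumption \ref{a:wei-phi-dec}(b) with the elementary cube-averaging comparison, the one point requiring care being the uniformity of the weight comparison in $x$ and $u$, which is guaranteed precisely by the scale matching $2^{j_I/2}t=1$.
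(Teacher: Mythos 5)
Your proposal is correct and is essentially the paper's own proof: apply Assumption \ref{a:wei-phi-dec}(b) to $\phi=\Phi$ at scale $j=j_I$, then use the scale matching $2^{j_I/2}\ell(I)=1$ to replace the weight centred at $y\in I$ by its average over $z\in I$ (the paper records exactly this comparison as $\sup_{y\in I}(1+|x-y|/t)^{-n-\epsilon}\le c\min_{y\in I}(1+|x-y|/t)^{-n-\epsilon}\le c|I|^{-1}\int_I(1+|x-z|/t)^{-n-\epsilon}\,dz$). One cosmetic slip: raising your displayed inequality $1+2^{j_I/2}|x-y-u|\le(1+\sqrt{n})(1+2^{j_I/2}|x-z-u|)$ to the power $-(n+\epsilon)$ gives the reverse of the bound you then state; what you actually need is the symmetric inequality $1+2^{j_I/2}|x-z-u|\le(1+\sqrt{n})(1+2^{j_I/2}|x-y-u|)$, which is precisely what your stated triangle inequality $|x-z-u|\le|x-y-u|+\sqrt{n}\,t$ yields, so the argument goes through unchanged.
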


\begin{proof}  Since $\Phi\in C^\iy([-1,1])\subset W_2^{n+\eps}([-1,1])$, 
according to  (\ref{e:phi-dec-zeta}), $\Phi_{j_I}(H)(x,y)$ is dominated by 
\begin{align*}
 &c\int_{\R^n} \lam_j^{-n}(1+\lam_j\inv|x-y-u|)^{-n-\eps}d\zeta(u)\\
\le&c |I|^{-1} \int_{\R^n} \int_{z\in I} 2^{j_In/2} (1+2^{j_I/2}|x-z-u| )^{-n-\eps} dz d\zeta(u), \quad\forall x\in\R^n,  y\in I,
\end{align*}
where $\lam_j=2^{-j_I/2}$ and we observed that for all $x$ and $t=\ell(I)$
\begin{align*}
\sup_{y\in I} (1+ |x-y|/t)^{-n-\eps} 
\le c \min_{y\in I} (1+ |x-y|/t)^{-n-\eps}\le
\frac{c}{|I|}\int_I  (1+ |x-z|/t)^{-n-\eps} dz .
\end{align*}
\end{proof}

\subsection{Proof of the weak-(1,1)}\label{ss:w-11}  
Let $f\in L^1\cap L^2$. For any given $\al>0$, apply the C-Z decomposition to obtain that $f=g+b$
for some $g\in L^1\cap L^2$, 
and $b\in L^1$ with $b=\sum_k b_k$, 
where $\supp\,b_k\subset I_k$, $I_k$ being disjoint 
cubes in $\R^n$ with lengths $\ell(I_k)$ equal to integer powers of $\sqrt{2}$ and 
\begin{itemize}
\item[(i)] $\displaystyle |g(x)|\le c\al$\qquad a.e. $x$
\item[(ii)] $\displaystyle |I_k|\inv\int_{I_k} |f(x)|dx\le c\al$
\item[(iii)] $\sum_k |I_k|\le c\al\inv\Vert f\Vert_1\,.$
\end{itemize}

We will prove that  there exists a constant $C$ such that $\forall f\in L^1\cap L^2$,
\begin{align}\label{e:mu-w11}
\vert \{x: |\mu(H)f(x)|>\al\}|\le C\al\inv\Vert f\Vert_1\big(\Vert \mu\Vert_{W^s_{2,sloc}}+
\Vert \mu\Vert_\iy^2+ 1\big).
\end{align}

Since $\mu\in L^\iy$, Chebeshev inequality gives
\begin{align*}
\vert&\{x: |\mu(H)g(x)|>\al/2\}|\le (\al/2)^{-2}\Vert \mu(H)g\Vert_2^2\\ 
\le& c\Vert \mu\Vert_\iy^2\al\inv \Vert f\Vert_1 \,.
\end{align*}

The main task is to deal with the ``bad'' function $b$. 
Let $\Phi$ 
be as in (\ref{e:Phi-phij}), $\Phi_j(x)=\Phi(2^{-j}x)$.
Write 
\[ \mu(H)b(x)=\sum_k \mu(H)(1-\Phi_{j_k}(H)) b_{k}(x)
+ \sum_k \mu(H)\Phi_{j_k}(H) b_{k}(x),\] 
where $2^{-j_k}= \ell(I_k)^2$. 
Denote by $I_k^*$ the cube having length $5\sqrt{n}$ times the length of $I_k$ with the same
center as $I_k$. We need to show
\begin{align*}
\vert&\{x\in\R^n\setminus \cup_kI_k^*: 
 |\mu(H) b(x)| >\al/2 \}\vert\\
\le&  |\{x\in\R^n\setminus \cup_kI_k^*:  \sum_k |\mu(H)(1-\Phi_{j_k}(H)) b_{k}(x)|>\al/4\}\\
+&    |\{x\in\R^n\setminus \cup_kI_k^*:  |\sum_k \mu(H)\Phi_{j_k}(H) b_{k}(x)|>\al/4\} \\
\le &c\Vert \mu\Vert_{W_{2,sloc}^s}\al^{-1} \Vert f\Vert_1\,.
\end{align*}

\nd{\bf a. High energy cut-off.} If $x\notin \cup_kI_k^*$, then $I_k\subset \{y: |y-x|>2\sqrt{n}\,t_k \}$, 
$t_k
=2^{-j_k/2}$. 
We have 
\begin{equation*}
 \mu(H)(1-\Phi_{j_k}(H ))b_k(x)=\int_{|y-x|>2t_k} \big(\mu(1-\Phi_{j_k})\big)(H)(x,y) b_k(y)dy .
\end{equation*}
 Applying Lemma \ref{l:j-horm-cond} (b) for $s>n/2$, we obtain
\begin{align*}  
&|\{x\notin \cup I_k^*:  |\sum_k \big(\mu(1-\Phi_{j_k})\big)(H )b_k(x)|>\al/4 \}| \\
 \le& c (\al/4)^{-1} \int_{\R^n\setminus\cup I_k^*}  |\sum_k \big(\mu(1-\Phi_{j_k})\big)(H)b_k(x)| dx \\
\le& c \al^{-1} \int  \sum_k |b_k(y)| dy\int_{|y-x|>2t_k} |\big(\mu(1-\Phi_{j_k})\big)(H)(x,y)|dx\\
 \le& c\Vert \mu\Vert_{W_{2,sloc}^s}\al^{-1}\Vert  f\Vert_1,
\end{align*}
where we note that
\begin{align*}
&\int_{|x-y|>2t_k} |\big(\mu(1-\Phi_{j_k})\big)(H)(x,y)| dx\\
\le&\int_{|x-y|>2t_k} \sum_j |\big(\mu_j(1-\Phi_{j_k})\big)(H)(x,y)| dx
\le c\Vert \mu\Vert_{W_{2,sloc}^s}. \end{align*}


\nd{\bf b. Low energy cut-off.} 
Since $\mu(H)$ is bounded on $L^2$, the proof is complete if we can show
\begin{equation}\label{e:sum-L2}
\int |\sum_k   \Phi_{j_k}(H)b_k(x)|^2 dx\le c \al  \Vert f\Vert_1. 
\end{equation}
To show this 
let $\rho_j=2^{jn/2}(1+2^{j/2}|\cdot | )^{-n-\eps}$. 
According to Lemma \ref{l:Phi-iint-dec},   $\forall h\in L^2$,
\begin{align*} 
&\vert\la  \sum_k \Phi_{j_k}(H)  b_k  , h \ra\vert
=\vert \sum_k \int h(x)dx \int_{y\in I_k} \Phi_{j_k}(H)(x,y)  b_k(y)dy\vert   \\
\le & \sum_k   
|I_k |^{-1} \int
|b_k(y)| dy\int |h(x)|dx\int_{z\in I_k}\int_u  
\rho_j(x-z-u) d\zeta(u) dz\\   
\le& 
c\al 
  \int
  \sum_k\chi_{I_k}(z)dz  \int (M_{HL} h) (z+u)    d\zeta(u)  \\ 
\le & c \al  \Vert\sum_k \chi_{I_k}\Vert_2 \Vert M_{HL} h* d\td{\zeta} \Vert_2\qquad (\text{$d\td{\zeta}=d\zeta(-\cdot)$ a finite measure})\\
\le & c \al  (\sum_k |I_k|)^{1/2} \Vert h\Vert_2
\le c \al^{1/2}\Vert f\Vert_1^{1/2} \Vert h\Vert_2, 
\end{align*}
which proves (\ref{e:sum-L2}) by duality. 
We have used the fact that if  $\rho_t= t^{-n}\rho(x/t)$ is any approximation  to the identity
so that $\rho\in L^1(\R^n)$ is positive and decreasing,  then 
\[
 \sup_{t>0} |\rho_t*f(x)| \le M_{HL}f(x),
 \]
where $M_{HL}$ denotes the Hardy-Littlewood maximal function on $\R^n$. 

Therefore (\ref{e:mu-w11}) is established. In view of (\ref{e:mu-infty-Ws}), the weak-(1,1) bound 
in (\ref{e:mu-w11-scal}) follows via the same argument above if, for given $\al>0$, instead of decomposing 
$f$ at height $\al$, one decomposes $f$ at height  
\mbox{$\al/ \max(\Vert \mu\Vert_{W_{2,sloc}^s}, \Vert \mu\Vert_\iy)$,} see e.g., \cite{CS01} for details.\hB


\subsection{Besov and Triebel-Lizorkin spaces}\label{ss:b-tl-H}
For a general selfadjoint operator acting on $L^2(\R^n)$, one can define 
the associated Besov and Triebel-Lizorkin spaces \cite{E95,JN94,OZ08}. 
Let $H=-\De+V$ on $\R^n$. 
Under the same conditions for $H$ and $\mu$ as in Theorem \ref{t:m(H)Lp}, 
we can show that $\mu(H)$ is bounded on these generalized spaces 
(cf. Theorem \ref{t:m(H)-B-F}, where $W^s_2$ is replaced with an 
abstract space). 
Like in the Fourier case \cite{BL76,Tr83}, the spectral multiplier theorems on them 
are closely related to some of the main results in Littlewood-Paley 
theory for $H$ (interpolation, embedding and identification) 
 \cite{JN94, E96, OZ06, 
 DP05}.

Let $\alpha \in \mathbb{R}$ and $1\le p,q\le\infty$. The homogeneous {\em Besov space} 
associated with $H$, denoted by $
\dot{B}_p^{\alpha,q}(H)$, 
is defined to be the completion of the Schwartz class $\cal{S}(\R^n)$, 
where the norm $\Vert \cdot\Vert_{\dot{B}_p^{\alpha,q}(H)}$ is given by 
\begin{equation}\label{e:B-norm}
\Vert f\Vert_{\dot{B}_p^{\alpha,q}(H)} 
= \bigg(\sum_{j=-\infty}^{\infty} 2^{j\alpha q} \Vert \vphi_j(H)f \Vert_p^q\bigg)^{1/q} .
\end{equation}

Similarly, the homogeneous {\em Triebel-Lizorkin space} $\dot{F}_p^{\alpha,q}(H)$ is defined by the norm 
\begin{equation*}
\Vert f\Vert_{\dot{F}_p^{\alpha,q}(H)} 
=\Vert \bigg(\sum_{j=-\infty}^{\infty} 2^{j\alpha q} \vert \vphi_j(H)f \vert^q\bigg)^{1/q}\Vert_p\,.
\end{equation*}

For $s\in\R$ let $X^s\subset \cal{S}'(\R)$ be a Banach 
space endowed with a norm $\Vert \cdot\Vert_{X^s}$\,, where $\cal{S}'(\R)$ is the space of tempered distributions
 on $\R$.
Further assume that $\{X^s\}_{s\in\R}$ satisfies the following properties.
\begin{itemize}
\item[a)] $\displaystyle C^\iy_0(\R)\subset X^s$, \quad $\forall s$
\item[b)] $\displaystyle X^{1/2+\eps}\subset L^\iy(\R)\cap C(\R)$, \quad  $\forall \eps>0$
\item[c)] $\displaystyle \Vert uv\Vert_{X^s}\le c\Vert u\Vert_{X^s} \Vert v\Vert_{X^s}$, \quad $\forall u,v\in X^s$, 
 $s>n/2$.
\end{itemize}
Examples of $X^s$ include $W^s_p(\R)$, $p\in (1,\iy)$ and $B^{s,q}_p(\R)$, $p,q\in (1,\iy)$, the classical Sobolev and Besov spaces,
see \cite[\S 6.8]{BL76} or \cite{Tr83}.


\begin{theorem}\label{t:m(H)-B-F} 
Suppose $H=-\De+V$ verifies Assumption \ref{a:wei-phi-dec} (a), (b) with $X^s, X^{n+\eps}$ replacing $W^s_2$,
 $W^{n+\eps}$ respectively.
Let $\mu$ satisfy for some $s>n/2$
\begin{equation*}
\Vert \mu\Vert_{X^s_*}:=\sup_{t>0}\Vert \mu(t\cdot)\chi\Vert_{X^s}<\iy,
\end{equation*}
where $\chi$ is a fixed function in $C^\iy_0(\R\setminus\{0\})$. 
Then $\mu(H)$ extends
to a bounded  operator on $\dot{B}_p^{\alpha,q}(H) $ for $ 1<p<\infty$,
$1\le q\le \infty$, $\alpha \in \R$ and  
$\dot{F}_p^{\alpha,q}(H) $ for $ 1<p<\infty$, $1<q< \infty$, $\alpha \in \R$.
\end{theorem}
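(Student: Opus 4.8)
The plan is to exploit the dyadic structure of the norms in (\ref{e:B-norm}) together with the scalar $L^p$ theorem already proved. The starting observation is the \emph{almost diagonal} identity
\[\vphi_j(H)\mu(H)=(\vphi_j\mu)(H)=(\vphi_j\mu)(H)\,\td\vphi_j(H),\]
where $\td\vphi_j=\td\vphi(2^{-j}\cdot)$ is a fattened cut-off with $\td\vphi\in C^\iy_0(\R\setminus\{0\})$ equal to $1$ on $\supp\vphi$ and supported in a slightly larger annulus meeting only finitely many of the $\supp\vphi_{j'}$, say $|j'-j|\le N$; since $\supp(\vphi_j\mu)\subset\supp\vphi_j$, functional calculus gives the identity. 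This reduces everything to two ingredients: (i) a uniform-in-$j$ bound for the single-scale multipliers $(\vphi_j\mu)(H)$, and (ii) an almost-orthogonality step writing $\td\vphi_j(H)f=\sum_{|j'-j|\le N}(\td\vphi_j\vphi_{j'})(H)f$. For (i) I would apply Theorem \ref{t:m(H)Lp} (in the $X^s$-formulation) to the multiplier $\vphi_j\mu$, noting that the algebra property (c) of $X^s$ together with the scale invariance of $\Vert\cdot\Vert_{X^s_*}$ gives $\Vert\vphi_j\mu\Vert_{X^s_*}\le c\Vert\mu\Vert_{X^s_*}$ \emph{uniformly} in $j$; the multipliers $\td\vphi_j\vphi_{j'}$ entering (ii) are smooth bumps with uniformly bounded $X^s_*$-norm, so they too are $L^p$-bounded uniformly by Theorem \ref{t:m(H)Lp}.

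For the Besov spaces the outer $\ell^q$ index does not interact with the $L^p$ integration, so only scalar bounds are needed. Combining (i) and (ii) gives
\[\Vert\vphi_j(H)\mu(H)f\Vert_p\le c\Vert\mu\Vert_{X^s_*}\sum_{|j'-j|\le N}\Vert\vphi_{j'}(H)f\Vert_p.\]
Multiplying by $2^{j\al}$, absorbing the harmless factor $2^{(j-j')\al}\le 2^{N|\al|}$, and summing the resulting finite band in $\ell^q$ (by the triangle inequality in $\ell^q$, valid for all $1\le q\le\iy$) yields $\Vert\mu(H)f\Vert_{\dot{B}_p^{\al,q}(H)}\le c\Vert\mu\Vert_{X^s_*}\Vert f\Vert_{\dot{B}_p^{\al,q}(H)}$. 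This accounts for the full range $1\le q\le\iy$ claimed in the Besov case.

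The Triebel-Lizorkin spaces require a genuinely vector-valued ($\ell^q$-valued) upgrade, which explains the restriction $1<q<\iy$. Here I would view the diagonal family $\mathcal{T}(g)_j=(\vphi_j\mu)(H)g_j$ as a single operator with operator-valued kernel $K(x,y)=\mathrm{diag}(K_j(x,y))$ on $\ell^q$, whose norm is $\sup_j|K_j(x,y)|$. A base bound on $L^q(\ell^q)$ follows by Fubini from the uniform scalar $L^q$ bounds of (i), while the operator-valued H\"ormander condition is exactly what the proof of Theorem \ref{t:m(H)Lp} furnishes: the high-energy part is controlled by Lemma \ref{l:j-horm-cond}(b), whose $\sum_j$ dominates the required $\sup_j$, and the low-energy part by the kernel decay of Lemma \ref{l:Phi-iint-dec}, which converts into an $L^p(\ell^q)$ bound through the Fefferman-Stein vector-valued maximal inequality. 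The vector-valued Calder\'on-Zygmund theorem (Benedek-Calder\'on-Panzone) then extends $\mathcal{T}$ to $L^p(\ell^q)$ for all $1<p<\iy$, and duality handles the adjoint family $(\vphi_j\mu)(H)^*$. Feeding $g_j=\td\vphi_j(H)f$ into $\mathcal{T}$ and using the vector-valued form of the almost-orthogonality step completes $\Vert\mu(H)f\Vert_{\dot{F}_p^{\al,q}(H)}\le c\Vert\mu\Vert_{X^s_*}\Vert f\Vert_{\dot{F}_p^{\al,q}(H)}$.

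The main obstacle is the Triebel-Lizorkin case, and within it the two vector-valued inputs: verifying the operator-valued (equivalently $\ell^q$-valued) H\"ormander condition uniformly in $j$ — for which the decisive point is that Lemma \ref{l:j-horm-cond}(b) bounds the \emph{sum} over $j$ and hence controls the supremum defining the $\ell^q$ operator norm — and recasting the low-energy decay of Lemma \ref{l:Phi-iint-dec} as a Fefferman-Stein maximal estimate so that it survives the passage from the scalar $L^p$ setting to $L^p(\ell^q)$. The weight bookkeeping across the finite band $|j'-j|\le N$ and the reconstruction from the $\td\vphi_j$-pieces are then routine once these two estimates are in hand.
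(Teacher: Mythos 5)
Your proposal is correct and follows essentially the same route as the paper: the Besov case reduces to the scalar $L^p$ theorem, and the Triebel--Lizorkin case hinges on exactly the diagonal vector-valued operator $T_\mu=\{\mu_j(H)\}$, $\{f_j\}\mapsto\{\mu_j(H)f_j\}$, that the paper identifies as the key step, handled by a vector-valued (operator-valued kernel) version of the weak-$(1,1)$ argument of Section \ref{s:pf-w11} built on Lemma \ref{l:j-horm-cond} and Lemma \ref{l:Phi-iint-dec} (details the paper defers to \cite{OOZ}). One minor simplification: in the Besov case your fattened-cutoff and almost-orthogonality machinery is unnecessary, since $\vphi_j(H)\mu(H)f=\mu(H)\vphi_j(H)f$ by functional calculus, so the scalar theorem applied once to $\mu(H)$ on $L^p$ already yields $2^{j\al}\Vert\vphi_j(H)\mu(H)f\Vert_p\le c\Vert\mu\Vert_{X^s_*}\,2^{j\al}\Vert\vphi_j(H)f\Vert_p$, and the $\ell^q$ sum is taken directly.
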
 

Note that Theorem \ref{t:m(H)Lp} holds under the same hypothesis in Theorem \ref{t:m(H)-B-F} with the same proof 
given in this section. 
The statement for $B_p^{\alpha,q}(H)$ follows immediately from (\ref{e:B-norm}). 
To show the statement for $F_p^{\alpha,q}(H)$,  we need 
to prove, as a key step, that the operator $T_\mu:=\{{\mu}_j(H)\}$ maps $L^1(\ell^q)$ continuously to weak-$L^1(\ell^q)$,
where $\mu_j=\mu\vphi_j$ and $T_\mu$ 
is given by $\{f_j\}\mapsto \{{\mu}_j(H)f_j\}$. This can be achieved by 
a vector-valued version of the proof 
in Subsection \ref{ss:w-11}.  The details are presented in \cite{OOZ}.

Under additional smoothness condition on $V$, one can identify $F_p^{\alpha,q}(H)=F_p^{2\alpha,q}(\R^n)$, which
allows us to obtain the boundedness of $\mu(H)$ on 
$F_p^{\alpha,q}$ and $B_p^{\alpha,q}$ spaces on $\R^n$ 
 according to Theorem \ref{t:m(H)-B-F}, cf. \cite{OZ08,Tr83}.

\begin{remark} We would like to mention that 
the boundedness of $\mu(H)$ on $L^p$, $1<p<\iy$ can also be obtained from wave operator method 
\cite{Y95,W99,DF06}.  
However our results give the endpoint estimate $L^1\to\text{weak-}L^1$ 
and also the boundedness for $F^{\al,q}_p$ spaces (including Sobolev space),
which consequently lead to interpolation and embedding 
results.
The reason is that wave operator method can transfer the integrability 
but somehow lose 
the pointwise information. \end{remark}

\section{Weighted $L^\iy$ estimates: High energy}\label{s:wei-infty-hi} 
Let $V\in L^1(\R)$. 
Then $H_V$ has the form domain $W^1_2(\R)$,
whose absolute continuous spectrum $\sigma_{ac}(H_V)=[0,\iy)$ and singular continuous spectrum is empty. 
The pure point spectrum $\sigma_{pp}(H_V)$ is finite provided that $\int (1+|x|) |V|dx<\infty$. 
Let $H_{pp}$ and $H_{ac}$ denote the projections of $H_V$ onto
the pure point and absolute continuous subspaces of $L^2(\R)$ respectively. 
From \cite[C.3]{Si82} we know that the eigenfunctions have exponential decay $\lesssim e^{-c|x|}$, $c>0$.    
\footnote{$A\lesssim B$ stands for the usual notion $A\le c B$ for some absolute constant $c$.}
It follows that $\Vert \mu(H_{pp})f\Vert\le c\Vert f\Vert_p$, $1\le p\le \iy$. Hence 
in view of the remark following Theorem \ref{t:m(H)Lp}, 
it suffices to verify 
(\ref{e:wei-L2-ineq}) and (\ref{e:phi-dec-rhoj-zetaj}) for $H_{ac}$ in place of $H$. 
As we will show, 
(\ref{e:phi-dec-rhoj-zetaj}) is a result of
Lemma \ref{l:high-ker-dec-mu} and Lemma \ref{l:low-ker-dec-mu}, and (\ref{e:wei-L2-ineq}) is a result of 
interpolation between Lemma \ref{l:wei-L2-mu-lo} and Lemma \ref{l:j-ker-L2}.

\subsection{Kernel formula} 
Let $R_V(z)=(H_V-z)\inv$ be the resolvent of $H_V$, $z\in \C\setminus [0,\iy)$.
For $\phi\in C(\R)$, $\phi(H)$ has the resolvent expression \cite[XIII.6]{RS} 
\begin{align}\label{e:spec-mea-H}
\phi(H_{ac})f(x)=&
\frac{1}{\pi}\int_0^\infty \phi(\lam)\Im R_V(\lam+i0)f d\lam\notag \\ 
=&\frac{1}{2\pi i} \int_{0}^\infty \phi(\lam)[R_V(\lam+i0)-R_V(\lam-i0)] f d\lam.
\end{align}

Let $W(\lam)$ be the Wronskian of $f_+$, $f_-$, then
 for $\lam\neq 0$ 
\begin{align*}
& R_V(\lam^2\pm i0)(x,y)=\begin{cases}
\frac{f_+(x,\pm\lam)f_-(y,\pm\lam)}{W(\pm\lam)}& x>y\\
\frac{f_+(y,\pm\lam)f_-(x,\pm\lam)}{W(\pm\lam)}& x<y,\\
\end{cases}
\end{align*}
where $f_\pm(x,z)$ are the Jost functions that solve
for $\Im z\ge 0$
\begin{equation}\label{e:jost-eq}
-f''_\pm(x,z)+V(x)f_\pm(x,z)=z^2f_\pm(x,z)
\end{equation}
and satisfy the asymptotics
\begin{align*}
f_\pm(x,z)\to
\begin{cases}e^{\pm izx} &x\to\pm \iy,\\
\frac{1}{t(z)}e^{\pm izx}+\frac{r_\mp(z)}{t(z)}e^{\mp izx} &x\to\mp \iy,
\end{cases}
\end{align*}
 $t(z), r_\pm(z)$ being the transmission and reflection coefficients
respectively, see \cite{DT79, GS04}.

Let $m_\pm(x,z)=e^{\mp iz x}f_\pm(x,z )$ be the {\em modified Jost functions}.
We obtain, from 
formula (\ref{e:spec-mea-H}) of the spectral measure of $H_{ac}$, that  
\begin{equation}\label{e:ker-phi-m}
\phi(H_{ac})(x,y)=\frac{1}{2\pi }\int_{-\iy}^\iy \phi(\lam^2)m_+(x,\lam)m_-(y,\lam)
t(\lam)e^{i\lam(x-y)}\, d\lam \,, 
\end{equation}
where $t(\lam)= -2i\lam/W(\lam)$, see e.g. \cite{GS04,OZ08}.\footnote{Since the kernel formula coincides
with the one using Lippmann-Schwinger scattering eigenfunctions,  (\ref{e:ker-phi-m}) is valid for 
both $x>y$ and $x<y$. }

\subsection{Fourier transforms of $m_\pm(x,k)$} The following lemma for $m_\pm$, $t$, $r_\pm$ 
are basically recorded 
from \cite{DT79}, 
see also \cite{OZ08}. Let $B_\pm(x,y)$ be the pair of functions
 satisfying the Marchenko equations in (\ref{e:B+marchenko}), (\ref{e:B-marchenko}).
\begin{lemma}\label{l:t-r-m-B} Let $V\in L^1_1$. Then
\begin{align*}
&m_+(x,k)=1+\int_0^\infty B_+(x,y)e^{2iky}dy\\
&m_-(x,k)=1+\int^0_{-\infty} B_-(x,y)e^{-2iky}dy\\  
&t(k)\inv= 1-\frac{1}{2ik}\int_{-\iy}^\iy V(t)m_\pm(t,k)dt \\
&\qquad\;\,= 1-\frac{\nu}{2ik}-\frac{1}{2ik}\int_{-\iy}^\iy V(t) dt\int_0^\infty B_+(t,y)(e^{2iky}-1) dy\\   
 &\qquad\;\, =  1-\frac{\nu_0}{2ik}-\frac{1}{2ik}\int_{-\iy}^\iy V(t) dt\int_0^\infty B_+(t,y) e^{2iky} dy\\ 
&r_\pm(k)t(k)\inv= \frac{1}{2ik}\int_{-\iy}^\iy e^{\mp 2ikt}V(t)m_\mp(t,k)dt , 
\end{align*}  where $\nu_0=\int_{-\iy}^\iy V(t)dt$ and  
\begin{equation}\label{e:nu-W}
\nu:=W(0)=\int_{-\iy}^\iy V(t)m_+(t,0)dt=\int_{-\iy}^\iy V(t)dt (1+\int_0^\infty B_+(t,y)dy ),\end{equation}
 see \cite[Remark 9, p.152]{DT79}.  \end{lemma}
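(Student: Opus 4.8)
The plan is to establish the three families of identities in sequence: the one-sided Fourier (transformation-operator) representations of the modified Jost functions $m_\pm$, then the Wronskian formula for $t\inv$, and finally its two rewritings together with the reflection identities, the latter being algebraic once the first two are in hand. Throughout I would work on the closed upper half-plane $\Im k\ge 0$, where the Jost solutions of (\ref{e:jost-eq}) are defined and analytic in the interior.

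First I would recast (\ref{e:jost-eq}) with its data at $\pm\iy$ as Volterra integral equations for $m_\pm=e^{\mp ikx}f_\pm$. Using the free resolvent kernel $k\inv\sin k(y-x)$ of $\pa_x^2+k^2$ one finds
\[
m_+(x,k)=1+\int_x^\iy \frac{e^{2ik(y-x)}-1}{2ik}\,V(y)\,m_+(y,k)\,dy,
\]
and the mirror-image equation for $m_-$ over $(-\iy,x]$. Using $|(2ik)\inv(e^{2ik(y-x)}-1)|\le\min(y-x,|k|\inv)$ for $\Im k\ge0$ and $y\ge x$, the Neumann series converges and gives $m_\pm$ bounded and analytic in $\Im k>0$ and continuous up to the real axis; the weight $V\in L^1_1$ is what makes these estimates uniform down to $k=0$.

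Next, to produce $m_+(x,k)=1+\int_0^\iy B_+(x,y)e^{2iky}\,dy$, I would pass to the transformation-operator kernel through $f_+(x,k)=e^{ikx}+\int_x^\iy K(x,y)e^{iky}\,dy$ and the substitution $y=x+2u$, giving $B_+(x,u)=2K(x,x+2u)$. The kernel $K$ is characterized in terms of $V$ alone by the Goursat data $K(x,x)=\tfrac12\int_x^\iy V$, $K_{xx}-K_{yy}=VK$, $K(x,\cdot)\to0$ (equivalently the integral equation (\ref{e:B+marchenko})); iterating this and estimating the successive kernels against the weight $(1+|x|)$ yields a series with $B_+(x,\cdot)\in L^1(0,\iy)$ together with control of its $x$-dependence. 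This integrability is precisely what lets one set $k=0$ and read off $m_+(x,0)=1+\int_0^\iy B_+(x,y)\,dy$, and it is where $V\in L^1_1$ is essential. The representation for $m_-$ via $B_-$ and (\ref{e:B-marchenko}) is symmetric.

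Finally I would compute the Wronskian: evaluating $W(f_+,f_-)$ against the asymptotics at $+\iy$ gives $W(k)=-2ik/t(k)$, consistent with $t(k)=-2ik/W(k)$. Letting $x\to-\iy$ in the Volterra equation for $m_+$ and splitting off the oscillatory factor,
\[
m_+(x,k)=1-\frac{1}{2ik}\int_x^\iy V m_+\,dy+\frac{e^{-2ikx}}{2ik}\int_x^\iy e^{2iky}V m_+\,dy,
\]
one matches the non-oscillating part to the limit $t\inv$ and the $e^{-2ikx}$ part to $r_-t\inv$, which yields the first coefficient formula and $r_-t\inv=(2ik)\inv\int e^{2iky}V m_+\,dy$; the $m_-$ computation gives the companion with $r_+$, and the equality of the two expressions for $t\inv$ is the constancy of $W$. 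The two remaining forms of $t\inv$ then follow by inserting the Fourier representation and writing $e^{2iky}=1+(e^{2iky}-1)$, using $\nu_0=\int V$ and (\ref{e:nu-W}). The main obstacle is the third step: justifying the one-sided representation rigorously — the support on $[0,\iy)$, the $L^1$-integrability and $x$-decay of $B_+$, and the passage to $k=0$ — all under the critical weight $V\in L^1_1$; the limits $x\to-\iy$ taken inside the integrals must also be justified with this decay, whereas the Wronskian identities and the final rewritings are routine.
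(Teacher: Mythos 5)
Your proposal is correct, and it reconstructs precisely the standard inverse-scattering argument of Deift--Trubowitz \cite{DT79} --- Volterra equations for $m_\pm$, the transformation-operator representation $m_\pm = 1+\widehat{B_\pm}$ with $B_\pm$ controlled via the Marchenko equations under the $L^1_1$ weight, and the matching of asymptotics as $x\to\mp\infty$ to extract $t(k)^{-1}$ and $r_\pm(k)t(k)^{-1}$ --- which is exactly the source the paper invokes: the paper gives no proof of this lemma, stating that it is ``basically recorded from \cite{DT79}.'' So your argument is essentially the paper's (cited) approach, and nothing further is needed.
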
 

Let $\hat{f}(k)=\int f(x)e^{-ikx}dx$ and $g^\vee(x)=\int g(k)e^{ikx}dk$. The following lemma
gives estimates on the Fourier transforms of $m_\pm$, 
which is an easy consequence of Lemma \ref{l:t-r-m-B} and Lemma \ref{l:B-L1} (c). 
\begin{lemma}\label{l:m+m-rho} Let $V\in L^1_1$. 
Let $x>0, y<0$. Then there exists a constant $c=c(\Vert V\Vert_{L^1_1})$ independent of $x,y$ such that $\forall u$
\begin{align*} 
&|m_+(x,\pm\cdot)^\vee(u)|
\le 2\pi\de+ c\chi_{\{\pm u<0\}}
\rho^+(\mp u/2)\in \R_+\de+L^1(\R_\mp) \\
&|m_-(y,\pm\cdot)^\vee(u)|
\le 2\pi\de+ c \chi_{\{\pm u<0\}}\rho^-(\pm u/2)
\in \R_+\de+L^1(\R_\mp) , 
\end{align*}
where $\de$ is the Dirac measure at zero, 
$\rho^+(u)=\int_{u}^\iy |V(t)|dt$, $\rho^-(u)=\int^{u}_{-\iy} |V(t)|dt$, $\R_+=(0,\iy)$ and $\R_-=(-\iy,0)$.
\end{lemma}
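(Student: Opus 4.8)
The plan is to substitute the integral representations of $m_\pm$ recorded in Lemma~\ref{l:t-r-m-B} directly into the inverse Fourier transform, read off the result as a Dirac mass plus an explicit reflection/dilation of the Marchenko kernel $B_\pm$, and then dominate that kernel by $\rho^\pm$ via the pointwise bound of Lemma~\ref{l:B-L1}(c).

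First I would fix $x>0$ and apply $(\,\cdot\,)^\vee$ in $k$ to $m_+(x,\pm k)=1+\int_0^\infty B_+(x,y)e^{\pm 2iky}\,dy$. Because $\int e^{iku}\,dk=2\pi\de(u)$, the constant term contributes $2\pi\de$; for the integral term, the bound of Lemma~\ref{l:B-L1}(c) gives $B_+(x,\cdot)\in L^1(0,\infty)$, so Fubini permits interchanging the $y$- and $k$-integrations and yields $\int_0^\infty B_+(x,y)\,2\pi\de(\pm 2y+u)\,dy$. Resolving the delta forces $y=\mp u/2$, which lies in $(0,\infty)$ precisely when $\pm u<0$, so that
\[ m_+(x,\pm\cdot)^\vee(u)=2\pi\de(u)+\pi\,B_+(x,\mp u/2)\,\chi_{\{\pm u<0\}}. \]
The same computation applied to $m_-(y,\pm k)=1+\int_{-\infty}^0 B_-(y,w)e^{\mp 2ikw}\,dw$ gives, for $y<0$,
\[ m_-(y,\pm\cdot)^\vee(u)=2\pi\de(u)+\pi\,B_-(y,\pm u/2)\,\chi_{\{\pm u<0\}}. \]

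To convert these identities into the stated bounds I would estimate the kernels. By Lemma~\ref{l:B-L1}(c) one has $|B_+(x,y)|\lesssim\rho^+(x+y)$; since $x>0$ and $\rho^+$ is nonincreasing, $\rho^+(x+y)\le\rho^+(y)$, whence $|B_+(x,\mp u/2)|\lesssim\rho^+(\mp u/2)$ on $\{\pm u<0\}$. Symmetrically, $|B_-(y,w)|\lesssim\rho^-(y+w)\le\rho^-(w)$ for $y<0$ because $\rho^-$ is nondecreasing, so $|B_-(y,\pm u/2)|\lesssim\rho^-(\pm u/2)$. Taking total variations in the two identities and absorbing the factor $\pi$ into $c$ produces exactly the claimed pointwise estimates.

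Finally, the membership in $\R_+\de+L^1(\R_\mp)$ is read off immediately: the atomic part is $2\pi\de$ with positive mass, and the absolutely continuous remainder is supported on the correct half-line thanks to the factor $\chi_{\{\pm u<0\}}$. Its integrability follows from Tonelli and the hypothesis $V\in L^1_1$; for instance $\int_{-\infty}^0\rho^+(-u/2)\,du=2\int_0^\infty\rho^+(v)\,dv=2\int_0^\infty t|V(t)|\,dt<\infty$, and the three remaining cases are identical. I expect no genuine obstacle here: all the analytic content sits in the kernel bound of Lemma~\ref{l:B-L1}(c), and the only points demanding care are the bookkeeping of signs when the delta $\de(\pm 2y+u)$ is resolved and the routine justification of the Fubini interchange, both of which are controlled once $B_\pm(x,\cdot)\in L^1$ is known.
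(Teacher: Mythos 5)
Your proposal is correct and takes essentially the same route as the paper: the paper's proof of this lemma is precisely the remark that it follows from the representation $m_\pm(x,k)=1+\int B_\pm(x,y)e^{\pm 2iky}\,dy$ of Lemma \ref{l:t-r-m-B} combined with the pointwise bound $|B_\pm(x,y)|\le e^{\gamma^\pm(x)}\rho^\pm(x+y)$ of Lemma \ref{l:B-L1}(c), which is exactly what you carry out. Your sign bookkeeping for the resolved delta, the use of $x>0$ (resp.\ $y<0$) both for the monotonicity of $\rho^\pm$ and for the uniform bound on $e^{\gamma^\pm}$, and the Tonelli computation giving the $L^1(\R_\mp)$ membership are all correct.
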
 

The next lemma provides series expansions for $t(k)$, $r_\pm(k)$ in the high energy, 
whose proofs will be postponed till the end of this section.
\begin{lemma}\label{l:high-al-t-r} 
 Let $V\in L^1_1$,  then there are $a_\pm(\R)$, 
 $b\in L^1(\R_-)$ such that for $|k|>k_0:=k_0(\Vert V\Vert_{L^1_1})>1$  \begin{align*}
&t(k)= 
          1+\sum_{n=1}^\infty (2ik)^{-n}(\nu_0+\hat{b}(k))^{n}\\
&r_\pm(k)= (-\nu_0+\hat{a_\pm}(k)) \sum_{n=1}^\infty (2ik)^{-n}(\nu_0+\hat{b}(k))^{n-1}\,,
\end{align*}
where $\nu_0=\int V(t)dt$ and $k_0$ is a fixed constant depending on $\Vert V\Vert_{L^1_1}$ and 
 $\Vert a_\pm\Vert_1$, $\Vert b\Vert_1\le c(\Vert V\Vert_{L^1_1})$.      
\end{lemma}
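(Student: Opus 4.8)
The plan is to read off closed Neumann-series formulas for $t(k)$ and $r_\pm(k)$ directly from the representations of $t(k)\inv$ and $r_\pm(k)t(k)\inv$ already recorded in Lemma \ref{l:t-r-m-B}, and then to justify convergence of the resulting geometric series in the high-energy region $|k|>k_0$ by a uniform smallness estimate. The two non-computational ingredients are the uniform $L^1$ bounds on the Marchenko kernels from Lemma \ref{l:B-L1} and the choice of the threshold $k_0$.

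First I would treat $t(k)$. Starting from the third identity in Lemma \ref{l:t-r-m-B},
\[ t(k)\inv = 1 - \frac{\nu_0}{2ik} - \frac{1}{2ik}\int_{-\iy}^\iy V(t)\,dt\int_0^\iy B_+(t,y)e^{2iky}\,dy, \]
set $\beta(y)=\int_{-\iy}^\iy V(t)B_+(t,y)\,dt$ for $y>0$ and define $b(x)=\tfrac12\beta(-x/2)$ for $x<0$, $b\equiv0$ on $\R_+$. The change of variables $x=-2y$ then gives $\hat b(k)=\int V(t)\,dt\int_0^\iy B_+(t,y)e^{2iky}\,dy$, so that $t(k)\inv=1-\tfrac{1}{2ik}(\nu_0+\hat b(k))$ with $b$ supported in $\R_-$. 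That $b\in L^1(\R_-)$ with $\Vert b\Vert_1\le c(\Vert V\Vert_{L^1_1})$ follows from Fubini, since $\Vert b\Vert_1=\int_0^\iy|\beta(y)|\,dy\le\int|V(t)|\int_0^\iy|B_+(t,y)|\,dy\,dt$, together with the uniform Marchenko bound on $\int_0^\iy|B_+(t,y)|\,dy$ from Lemma \ref{l:B-L1}. Consequently $|\nu_0+\hat b(k)|\le\Vert V\Vert_1+\Vert b\Vert_1\le c(\Vert V\Vert_{L^1_1})$ uniformly in $k$, so I may pick $k_0=k_0(\Vert V\Vert_{L^1_1})>1$ with $|(\nu_0+\hat b(k))/(2ik)|\le c/(2k_0)<1$ for all $|k|>k_0$; expanding $t(k)=(t(k)\inv)\inv=\sum_{n\ge0}\big((\nu_0+\hat b(k))/2ik\big)^n$ yields the claimed formula, with convergence absolute and uniform on $|k|>k_0$.

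For $r_\pm(k)$ I would use $r_\pm(k)=t(k)\cdot\frac{1}{2ik}\int_{-\iy}^\iy e^{\mp2ikt}V(t)m_\mp(t,k)\,dt$ from the last line of Lemma \ref{l:t-r-m-B}. Substituting $m_\mp(t,k)=1+\int B_\mp(t,y)e^{\mp2iky}\,dy$ and again applying Fubini with a linear change of variables exhibits $\int e^{\mp2ikt}V(t)m_\mp(t,k)\,dt$ in the form $-\nu_0+\hat a_\pm(k)$ with $a_\pm\in L^1(\R)$ and $\Vert a_\pm\Vert_1\le c(\Vert V\Vert_{L^1_1})$, the integrability and norm bound coming once more from Lemma \ref{l:B-L1}. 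Multiplying this by the geometric series already obtained for $t(k)$ and reindexing $n\mapsto n-1$ produces the stated expansion for $r_\pm(k)$ on the same range $|k|>k_0$, the convergence being inherited from that of the $t(k)$ series.

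The main obstacle is the uniform $L^1$ control of the auxiliary kernels, namely $\Vert a_\pm\Vert_1,\Vert b\Vert_1\le c(\Vert V\Vert_{L^1_1})$: this is precisely what keeps $\nu_0+\hat b(k)$ bounded (so that the series sums for large $k$) and simultaneously identifies the correction terms as Fourier transforms of integrable functions. Everything rests on the quantitative Marchenko-kernel estimates of Lemma \ref{l:B-L1}; granting those, the remaining work is the bookkeeping with Fubini, the change of variables, and summation of a geometric series.
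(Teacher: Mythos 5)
Your treatment of $t(k)$ is correct and coincides with the paper's own proof: both expand $t=(t\inv)\inv$ in a geometric series starting from the third identity of Lemma \ref{l:t-r-m-B}, identify $\hat{b}(k)=\hat{\beta}(-2k)$ with $\beta(y)=\chi_{(0,\iy)}(y)\int V(t)B_+(t,y)\,dt$, obtain $\beta\in L^1$ from Lemma \ref{l:B-L1} (a) via Fubini, and choose $k_0$ from the resulting uniform bound on $|\nu_0+\hat{b}(k)|$. For $r_\pm$ your route is genuinely different from the paper's: you work directly with $r_\pm(k)t(k)\inv=\frac{1}{2ik}\int e^{\mp2ikt}V(t)m_\mp(t,k)\,dt$, whereas the paper introduces $\al_\pm(k)=(1+r_\pm(k))t(k)\inv=1+\frac{1}{2ik}\int(e^{\mp2ikt}-1)V(t)m_\mp(t,k)\,dt$ and then sets $r_\pm=\al_\pm t-1$. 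This difference is not cosmetic, and it is exactly where your writeup has a flaw.

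The step that fails is your claim that Fubini and a change of variables "exhibit" $\int e^{\mp2ikt}V(t)m_\mp(t,k)\,dt$ in the form $-\nu_0+\hat{a}_\pm(k)$. Substituting $m_\mp(t,k)=1+\int B_\mp(t,y)e^{\mp2iky}\,dy$ and changing variables gives
\begin{equation*}
\int e^{\mp 2ikt}V(t)m_\mp(t,k)\,dt=\hat{V}(\pm2k)+\widehat{W_\mp}(\pm2k),\qquad W_\mp\in L^1(\R)\ \text{by Lemma \ref{l:B-L1} (a)},
\end{equation*}
which tends to $0$ as $|k|\to\iy$ by Riemann--Lebesgue; hence it is of the form $\hat{a}_\pm(k)$ with $a_\pm\in L^1$ and can \emph{never} equal $-\nu_0+\hat{a}_\pm(k)$ unless $\nu_0=0$. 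In the paper, the constant $-\nu_0$ arises honestly from the subtracted $1$ in the factor $(e^{\mp2ikt}-1)$ paired against $V(t)\cdot1$, which is how (\ref{e:al-nu0-ak}) acquires the term $-\nu_0/2ik$; that mechanism is absent from your direct route. In fact, tracing the paper's final step carefully, $r_\pm=\al_\pm t-1=(\al_\pm-1)t+(t-1)$ and $t-1=\frac{(\nu_0+\hat{b})t}{2ik}$, so the two copies of $\nu_0$ cancel and one gets $r_\pm=(\hat{a}_\pm+\hat{b})\sum_{n\ge1}(2ik)^{-n}(\nu_0+\hat{b})^{n-1}$, i.e.\ a prefactor that is purely the Fourier transform of an $L^1$ function --- in agreement with what your computation actually produces, and with the asymptotics $2ik\,r_\pm(k)\to0$ implied by the same identities. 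So your method, carried out honestly, proves the formula with prefactor $\hat{a}_\pm(k)$ alone; you should state that, rather than assert a match with the displayed $-\nu_0+\hat{a}_\pm(k)$, which your computation contradicts. Note that this discrepancy is harmless downstream: the application in Lemma \ref{l:high-ker-dec-mu} only requires the prefactor to be the Fourier transform of a finite measure of the form $c\de+L^1$, which holds in either version.
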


We will also need the relations between $m_+$ and $m_-$ \cite[Ch.2, 
p.144]{DT79}.
\begin{lemma}\label{l:t-m+m-}  Let $V\in L^1_1$. 
\begin{align*}
&t(k)m_-(x,k)=e^{2ikx}r_+(k)m_+(x,k)+m_+(x,-k)\\ 
&t(k)m_+(x,k)=e^{-2ikx}r_-(k)m_-(x,k)+m_-(x,-k) . 
\end{align*} 
\end{lemma}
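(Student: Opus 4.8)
The plan is to derive both identities from the classical scattering relations among the Jost solutions of (\ref{e:jost-eq}), and then translate into the modified functions via the substitution $f_\pm(x,k)=e^{\pm ikx}m_\pm(x,k)$. The starting point is that for real $k\neq 0$ the pair $f_+(\cdot,k)$, $f_+(\cdot,-k)$ forms a basis of the two-dimensional solution space of (\ref{e:jost-eq}): their constant Wronskian, computed from the asymptotics $f_+(x,\pm k)\to e^{\pm ikx}$ as $x\to+\iy$, is a nonzero constant of the form $\pm 2ik$. Hence $f_-(\cdot,k)$ admits a unique representation $f_-(x,k)=A(k)f_+(x,k)+B(k)f_+(x,-k)$.

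First I would pin down $A(k),B(k)$ by matching the $x\to+\iy$ asymptotics of both sides. The left-hand side behaves like $\tfrac{1}{t(k)}e^{-ikx}+\tfrac{r_+(k)}{t(k)}e^{ikx}$, while the right-hand side tends to $A(k)e^{ikx}+B(k)e^{-ikx}$; equating the coefficients of the linearly independent exponentials $e^{\pm ikx}$ gives $A(k)=r_+(k)/t(k)$ and $B(k)=1/t(k)$. Multiplying through by $t(k)$ yields the Jost-function form $t(k)f_-(x,k)=r_+(k)f_+(x,k)+f_+(x,-k)$. Substituting $f_-(x,k)=e^{-ikx}m_-(x,k)$, $f_+(x,k)=e^{ikx}m_+(x,k)$ and $f_+(x,-k)=e^{-ikx}m_+(x,-k)$, then cancelling the common factor $e^{-ikx}$, produces exactly the first displayed identity.

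The second identity I would obtain by the mirror argument, now using that $f_-(\cdot,k)$, $f_-(\cdot,-k)$ are independent (their Wronskian is read off from $f_-(x,\pm k)\to e^{\mp ikx}$ as $x\to-\iy$). Expanding $f_+(x,k)=C(k)f_-(x,k)+D(k)f_-(x,-k)$ and matching the $x\to-\iy$ asymptotics, where $f_+(x,k)\to\tfrac{1}{t(k)}e^{ikx}+\tfrac{r_-(k)}{t(k)}e^{-ikx}$, gives $C(k)=r_-(k)/t(k)$ and $D(k)=1/t(k)$; the same change of variables to the modified functions then delivers the second line. As an alternative to asymptotic matching, both coefficient pairs can be fixed by evaluating the relevant constant Wronskians directly and invoking $t(\lam)=-2i\lam/W(\lam)$, which avoids any appeal to the expansions beyond the definitions of $t,r_\pm$.

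Since existence and analyticity of the Jost solutions for $V\in L^1_1$, their stated asymptotics, and the definitions of $t,r_\pm$ are already available, there is no genuine analytic obstacle here; the statement is essentially \cite[Ch.2, p.144]{DT79}. The only points requiring care are bookkeeping ones: keeping the signs in the exponential prefactors consistent when passing between $f_\pm$ and $m_\pm$, and noting that the identities are asserted for real $k\neq 0$, the value $k=0$ being precisely where the Wronskian degenerates, i.e.\ the resonance case $\nu=W(0)=0$.
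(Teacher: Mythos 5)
Your proof is correct. Note that the paper itself offers no proof of this lemma at all: it is quoted verbatim from \cite[Ch.\ 2, p.\ 144]{DT79}, and your argument --- expanding $f_-(\cdot,k)$ in the basis $\{f_+(\cdot,k),f_+(\cdot,-k)\}$ of the solution space (valid since their constant Wronskian is $-2ik\neq 0$ for real $k\neq 0$), fixing the coefficients $A(k)=r_+(k)/t(k)$, $B(k)=1/t(k)$ from the $x\to+\infty$ asymptotics, passing to $m_\pm$ via $f_\pm(x,k)=e^{\pm ikx}m_\pm(x,k)$, and mirroring for the second identity --- is exactly the standard derivation that underlies that citation, so in substance you have supplied the proof the paper delegates to the reference. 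One peripheral remark in your last sentence should be corrected: the Wronskian that degenerates at $k=0$ in your argument is $W\bigl(f_+(\cdot,k),f_+(\cdot,-k)\bigr)=-2ik$, which vanishes at $k=0$ for \emph{every} potential; this is not the same object as $\nu=W(0)$, the Wronskian of $f_+(\cdot,0)$ and $f_-(\cdot,0)$, whose vanishing defines a resonance. So the restriction to $k\neq 0$ is unrelated to whether zero is a resonance, and your ``i.e.'' conflates two different Wronskians; this does not affect the validity of the proof itself.
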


\subsection{High energy cutoff for $\Phi_j(H_{ac})(x,y)$}\label{ss:weiLiy-hi-Phij} We are ready to prove (\ref{e:phi-dec-rhoj-zetaj}) for the high energy. 
\begin{lemma}\label{l:high-ker-dec-mu}  Let $V\in L^1_1$ and $\Phi\in C^\iy([-1,1])$ as in (\ref{e:Phi-phij}). Then 
there exists a finite measure $d\zeta_{high}$ 
in $\R_+\de+L^1$ 
such that for all $x,y$ and $j\ge j_0:=j_0(\Vert V\Vert_{L^1_1})$,
\begin{align}\label{e:hi-Phij-zeta} 
|\big((1-\Phi_{j_0})\Phi_j\big)(H_{ac})(x,y)|
\le\sum_\pm(\rho_0+\rho_j)*d\zeta_{high}(\pm x\pm y) ,
\end{align}
where 
$j_0$ is a fixed 
number depending on $\Vert V\Vert_{L^1_1}$ only, 
$0\le\rho_0(x)\le c_N(1+|x|)^{-N}$, $0\le\rho_j(x)\le c_N2^{j/2}(1+2^{j/2}|x|)^{-N}$, $\forall N$.
\end{lemma}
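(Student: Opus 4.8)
The plan is to insert the high-energy expansions of $m_\pm$ and $t$ into the kernel formula (\ref{e:ker-phi-m}) and to read off the claimed bound as a single convolution. Writing $\phi=(1-\Phi_{j_0})\Phi_j$, which as a function of $\lambda^2$ is supported in the annulus $\{2^{(j_0-1)/2}\le|\lambda|\le 2^{j/2}\}$, I would first fix $j_0=j_0(\Vert V\Vert_{L^1_1})$ so large that $2^{(j_0-1)/2}>k_0$, where $k_0$ is the constant of Lemma \ref{l:high-al-t-r}; then the series for $t(\lambda)$ from that lemma is valid on $\supp\phi(\cdot^2)$. Using Lemma \ref{l:m+m-rho} I write $m_+(x,\lambda)=\int e^{i\lambda u}\,d\beta^x_+(u)$ and $m_-(y,\lambda)=\int e^{i\lambda v}\,d\beta^y_-(v)$, where $\beta^x_+,\beta^y_-$ are supported on half-lines and, crucially, admit the $x,y$-independent dominations $|d\beta^x_+|\le d\gamma_+$, $|d\beta^y_-|\le d\gamma_-$ with $d\gamma_\pm\in\R_+\de+L^1$ finite (finiteness of the $L^1$ parts is exactly $\int\rho^\pm(\cdot/2)<\iy$, i.e.\ $V\in L^1_1$). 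Substituting into (\ref{e:ker-phi-m}) and integrating in $\lambda$ first gives
\[
\phi(H_{ac})(x,y)=\iint K(x-y+u+v)\,d\beta^x_+(u)\,d\beta^y_-(v),\qquad K(w)=\frac{1}{2\pi}\bigl(\phi(\cdot^2)\,t\bigr)^\vee(w).
\]

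The heart of the matter is to bound $K$ by $(\rho_0+\rho_j)*d\sigma$ for a finite, $j$-independent measure $d\sigma\in\R_+\de+L^1$. I split $\phi(\lambda^2)t(\lambda)=\phi(\lambda^2)+\phi(\lambda^2)(t(\lambda)-1)$. For the first piece, $(\phi(\cdot^2))^\vee$ is the transform of a smooth function with two cut-off scales, an inner edge at $|\lambda|\sim2^{j_0/2}$ and an outer edge at $|\lambda|\sim2^{j/2}$, so repeated integration by parts yields the two-scale decay $|(\phi(\cdot^2))^\vee(w)|\le c_N(\rho_0+\rho_j)(w)$ for every $N$, which is precisely where $\rho_0$ (scale $1$) and $\rho_j$ (scale $2^{-j/2}$) come from. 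For the second piece I substitute $t(\lambda)-1=\sum_{n\ge1}(2i\lambda)^{-n}(\nu_0+\hat b(\lambda))^n$ from Lemma \ref{l:high-al-t-r}. Each summand factors as $\phi(\lambda^2)(2i\lambda)^{-n}$ times $(\nu_0+\hat b(\lambda))^n$; the first factor is again a two-scale smooth function (now of size $\lesssim2^{-nj_0/2}$ on its support), so its transform is $\le C_n(\rho_0+\rho_j)$ with $C_n\lesssim p(n)2^{-nj_0/2}$ for a fixed polynomial $p$, while $(\nu_0+\hat b)^n$ transforms to the finite measure $(\nu_0\de+b^\vee)^{*n}$ of mass $\le(|\nu_0|+\Vert b\Vert_1)^n=:M^n$. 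Taking inverse transforms turns products into convolutions, and summing the series, which converges once $2^{j_0/2}>M$ (a further constraint absorbed into $j_0=j_0(\Vert V\Vert_{L^1_1})$), gives $|K|\le(\rho_0+\rho_j)*d\sigma$ with $d\sigma=c\de+\sum_n C_n\,|\nu_0\de+b^\vee|^{*n}$ finite.

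Feeding this back in and using $|d\beta^x_+|\le d\gamma_+$, $|d\beta^y_-|\le d\gamma_-$,
\[
|\phi(H_{ac})(x,y)|\le\iint (\rho_0+\rho_j)*d\sigma\,(x-y+u+v)\,d\gamma_+(u)\,d\gamma_-(v)=(\rho_0+\rho_j)*d\zeta_{high}(x-y),
\]
where $d\zeta_{high}=c\,d\sigma*\widetilde{d\gamma_+}*\widetilde{d\gamma_-}$ is finite and lies in $\R_+\de+L^1$ (a $\de$ convolved with $\de+L^1$ factors stays $\de+L^1$). This is the $x-y$ term of (\ref{e:hi-Phij-zeta}); the remaining three translates $\pm x\pm y$ arise because the uniform $\rho^\pm$ dominations of Lemma \ref{l:m+m-rho} require $x>0$, $y<0$. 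To cover the other three sign quadrants I would split $\int_\R d\lambda$ into $\lambda\gtrless0$, so that both $m_\pm(\cdot,\pm\lambda)^\vee$ enter with Fourier support on the two half-lines, and use the reflection symmetry $V(x)\leftrightarrow V(-x)$, $m_+\leftrightarrow m_-$ together with the identities of Lemma \ref{l:t-m+m-} to reduce each quadrant to the basic case; the half-line supports then land the convolution argument at $x-y$, $x+y$, $-x-y$, or $-x+y$.

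The step I expect to be the main obstacle is the bound on $K$: making the high-energy series for $t$ converge uniformly in $j$ while simultaneously extracting the correct two-scale decay $\rho_0+\rho_j$, since the factors $(2i\lambda)^{-n}$ must be absorbed into the smooth annular cut-off without destroying either the per-term decay or the summability. The bookkeeping over the four sign quadrants needed to produce all four translates is comparatively routine.
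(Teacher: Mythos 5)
Your strategy is essentially the paper's own: work in the quadrant $x>0>y$, expand $t(k)$ by the high-energy geometric series of Lemma \ref{l:high-al-t-r}, exploit the convolution structure coming from the Fourier representations of $m_\pm$ together with the $x,y$-independent dominations of Lemma \ref{l:m+m-rho} (valid precisely for $x>0$, $y<0$, with $L^1$ mass finite because $V\in L^1_1$), choose $j_0$ so that $2^{j_0/2}$ beats both $k_0$ and the mass $|\nu_0|+\Vert b\Vert_1$, and reach the remaining sign quadrants through the identities of Lemma \ref{l:t-m+m-} (the paper's Cases b and c, which also require the $r_\pm$ series). The convolution bookkeeping, the membership $d\zeta_{high}\in\R_+\delta+L^1$, and the choice of $j_0$ are all as in Section \ref{s:wei-infty-hi}.

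The genuine gap is the $n=1$ term of the series — exactly the step you flag as the main obstacle, and your proposed justification does not cover it. Your per-term bound $|(\tilde{\psi}_j(k)k^{-n})^\vee|\le C_n(\rho_0+\rho_j)$ is argued by "repeated integration by parts on a two-scale smooth function of size $2^{-nj_0/2}$ on its support". This is sound for $n\ge2$: there $\Vert\tilde{\psi}_j k^{-n}\Vert_{L^1_k}\lesssim 2^{-(j_0-1)(n-1)/2}$ uniformly in $j$, so the trivial bound controls the near field $|\xi|\lesssim 1$ and integration by parts the far field. For $n=1$ it fails, because
\[
\int_{2^{(j_0-1)/2}\le|k|\le 2^{j/2}}|k|^{-1}\,dk\;\sim\;(j-j_0)
\]
diverges logarithmically as $j\to\infty$; no size-and-support argument can then give a near-field bound uniform in $j$, and in fact the constant one can obtain for $n=1$ is $O(1)$ rather than the $O(2^{-j_0/2})$ your summation scheme posits (harmless for convergence, since it is a single term, but a sign that the scaling heuristic breaks there). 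What rescues this term is a cancellation: $\tilde{\psi}_j(0)=0$, i.e.\ $\int\tilde{\psi}_j^\vee=0$, which yields the paper's identity (\ref{e:psij-k-xi}),
\[
(\tilde{\psi}_j(k)k^{-1})^\vee(\xi)=-i\int_\xi^\infty\tilde{\psi}_j^\vee(u)\,du=i\int_{-\infty}^\xi\tilde{\psi}_j^\vee(u)\,du ,
\]
so that integrating the two-scale decay of $\tilde{\psi}_j^\vee$ over the favorable half-line (chosen according to the sign of $\xi$) gives the uniform bound $c_N(1+|\xi|)^{-N}$. Mere oddness of $\tilde{\psi}_j(k)/k$ is not enough — estimating $|\sin(k\xi)|\le\min(1,|k\xi|)$ still leaves a $\log(2^{j/2}|\xi|)$ loss. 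Note also that the same repair is needed in your other quadrants, since the $r_\pm$ expansion in Lemma \ref{l:high-al-t-r} likewise begins with a $(2ik)^{-1}$ factor.
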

\begin{proof}  
In the following we always assume 
$x>y$. The estimates for $x<y$ follow by symmetry. 
We divide the discussions into three cases. a)  $x>0,y<0$, 
b) $x>0, y>0$, and  c) $x<0,y<0$.

Let $\tilde{\psi}_j(k)=(1-\Psi_{j_0}(k))\Psi_j(k)$,
$\Psi_j(k)=\Phi_j(k^2)$. Let $j_0:= \max(2+[2\log_2k_0] , 
2\log_2\Vert d\sigma\Vert_M)$, 
$d\sigma= |\nu_0|\de + |b|$, where $k_0, b$ are the same as in Lemma \ref{l:high-al-t-r}.

\nd{\bf Case a.} $x>0$, $y<0$.  According to (\ref{e:ker-phi-m}) and Lemma \ref{l:high-al-t-r}, we have 
for  $j\ge j_0$, 
\begin{align*}
&2\pi\big((1-\Phi_{j_0})\Phi_j\big)(H_{ac})(x,y)\\
 =&\sum_{n=0}^\infty  (1/2i)^n \int \tilde{\psi_j}(k) k^{-n}   (\nu_0 + \hat{b}(k))^{n}
  m_+(x,k) m_-(y,k)e^{i(x-y)k} dk\\ 
:=&\sum_{n=0}^\iy I_n(x,y). \end{align*} 
By Lemma \ref{l:m+m-rho},  
if $x>0$, $y<0$, 
 \begin{align*} 
 |m_+(x,\cdot)^\vee*m_-(y,\cdot)^\vee(u)|
\le
d\zeta_0:=c\de+\rho_1\in \R_+\de+L^1(\R_-).
\end{align*} 

If $n=0$,
\begin{align*}
&|I_0(x,y)|
 =\frac{1}{4\pi^2}|\int \tilde{\psi_j}^\vee(x-y-u)   m_+(x,\cdot)^\vee* m_-(y,\cdot)^\vee(u) du|\\
  \le & \int |\tilde{\psi_j}^\vee(x-y-u)|  d\zeta_0(u)  ,
    \end{align*}
 where since $\Psi\in C^\iy_0$, we have
\begin{align*}
|\tilde{\psi_j}^\vee(x)|\le 2^{j_0/2}(1+2^{j_0/2}|x|)^{-N}+2^{j/2}(1+2^{j/2}|x|)^{-N}\end{align*}
 by writing  
\begin{align}\label{e:psi-jj0} &\tilde{\psi_j}^\vee(\eta)= \Psi_j^\vee(\eta)-\Psi_{j_0}^\vee(\eta)\notag\\
=& 2^{j/2}\Psi^\vee(2^{j/2}\eta)-  2^{j_0/2} \Psi^\vee( 2^{j_0/2}\eta) .
\end{align}

For $n=1$, observe that
\begin{align} 
 &(\tilde{\psi_j}(k) k^{-1})^\vee (\xi)=
\frac{1}{2i} ( \int_\xi^\infty \tilde{\psi_j}^\vee(u)   du - 
\int_{-\infty}^\xi \tilde{\psi_j}^\vee(u)   du)\notag\\
=&-i \int_\xi^\infty \tilde{\psi_j}^\vee(u)   du= 
i \int_{-\infty}^\xi \tilde{\psi_j}^\vee(u)   du,\label{e:psij-k-xi}\end{align}
where $\int  \tilde{\psi_j}^\vee(u)du=2\pi \tilde{\psi_j}(0)=0$. 
It is easy to see from (\ref{e:psij-k-xi}) and (\ref{e:psi-jj0}) that for each $N\in \N$ there exists a constant $c_N>0$ such that for all 
$j\ge j_0$,
\begin{align*}
\int_{\xi}^\infty |\tilde{\psi_j}^\vee(\eta)|   d\eta 
\le c_N (1+|\xi|)^{-N}\qquad \forall \xi . 
   \end{align*}
Thus   \begin{align*}
|I_1(x,y)|= &|\int \tilde{\psi_j}(k)k^{-1} (\nu_0+\hat{b}(k) ) m_+(x,k) m_-(y,k)e^{i(x-y)k} dk|\\
 \le & \int | (\tilde{\psi_j}k^{-1})^\vee(x-y-u) | (|\nu_0|\de+ |b|)*d\zeta_0(u) \\
  \le& c_N\int (1+|x-y-u| )^{-N}  d\zeta_1(u) ,
    \end{align*} 
where $d\zeta_1= (|\nu_0|\de+|b|) * d\zeta_0$ 
is in $\R_+\de+L^1(\R_-)$. 

If $n\ge 2$, we have by integration by parts: for $j\ge j_0$, $N\ge 1$, 
\begin{align*} & |(1+\xi^N) \int \tilde{\psi_j}(k) k^{-n} e^{ik\xi} dk|\\ 
=& |\int_{2^{(j_0-1)/2}\le |k|\le 2^{j/2}} e^{ik\xi} (1+i^N\partial_k^N)\big[\tilde{\psi_j}(k) k^{-n}\big] dk|\\ 
\le & c_{j_0,N}  n^{N-1} 2^{-(j_0-1)n /2} \qquad  \forall \xi. 
\end{align*}
Hence, with $d\sigma= |\nu_0|\de + |b(u)|$, 
\begin{align*}
&\sum_{n=2}^\iy |I_n(x,y)|\\
\le& c_N\sum_{n=2}^\infty n^{N-1}2^{-j_0 n/2}  2^{-n/2}\int (1+|x-y-u|)^{-N}  \overbrace{d\sigma*\cdots*d\sigma}^n
 *d\zeta_0(u)\\
\le& c_N\int (1+|x-y-u|)^{-N}d\td{\zeta}(u) ,
\end{align*}
where by our choice $j_0>2\log_2\Vert d\sigma\Vert_M-1$  so that  
\begin{equation*}
d\td{\zeta}:=\sum_{n=2}^\infty n^{N-1} 2^{-j_0 n/2}  2^{-n/2}\overbrace{d\sigma*\cdots*d\sigma}^n
 *d\zeta_0(u) \end{equation*} 
is a finite measure in $\R_+\de+L^1$. Combining the the above estimates for $I_n(x,y)$, $n=0,1$ and $\ge 2$, we thus establish 
(\ref{e:hi-Phij-zeta}) in Case a. 
   
\nd {\bf Case b.} $x>y>0$.   
By (\ref{e:ker-phi-m}) and Lemma \ref{l:t-m+m-}
\begin{align*}
&2\pi  \big((1-\Phi_{j_0}) \Phi_j\big)(H_{ac})(x,y)\\
=& \int \td{\psi}_j^\vee(x+y-u)  (r_+(\cdot)m_+(x,\cdot) m_+(y,\cdot))^\vee(u) du  \\
+& \int \td{\psi}_j^\vee(x-y-u)  (m_+(x,\cdot) m_+(y,-\cdot))^\vee(u) du .
\end{align*}

Similar to Case a, using Lemma \ref{l:m+m-rho} and the formula for $r_+(k)$ in Lemma \ref{l:high-al-t-r}  we obtain
that there exists some finite measure  $d\zeta_2 
\in \R_+\delta+L^1$ so that for all
$x>0, y>0$ and $j\ge j_0$,
\begin{align*}
 &| \big((1-\Phi_{j_0})\Phi_j\big)(H_{ac})(x,y) |\\
\le& 
\int |\tilde{\psi_j}^\vee(x-y-u)| d\zeta_2(u) +
c_N\sum_\pm \int (1+|x\pm y- u|)^{-N}  d\zeta_2(u) . 
\end{align*}

\nd {\bf Case c.} $0>x>y$. Similar to Case (b), we obtain that there exists some finite measure  $d\zeta_3\in \R_+\delta+L^1$ so that for all
 $0>x>y$ and $j\ge j_0$,
\begin{align*}
 &| \big((1-\Phi_{j_0})\Phi_j\big)(H_{ac})(x,y) |\\
\le& c_N\int (1+|x+ y+ u|)^{-N}  d\zeta_3(u) 
 +\int |\tilde{\psi_j}^\vee(x-y-u)| d\zeta_3(u).
\end{align*}
\end{proof}

\subsection{Proof of Lemma \ref{l:high-al-t-r}} 
By Lemma \ref{l:t-r-m-B}, if $|k|>k_0=k_0(\Vert V\Vert_{L^1_1})$ 
large enough, we have a geometric series expansion 
\begin{align*}
& t(k)=(1-\frac{\nu_0}{2ik}- \frac{1}{2ik}\int V(t) dt\int_0^\infty B_+(t,y)e^{2iky}dy)^{-1}\\
=&\sum_{n=0}^\infty (2i k)^{-n}   (\nu_0+ \int V(t) dt\int_0^\infty B_+(t,y)e^{2iky} dy)^{n}\\
 =&\sum_{n=0}^\iy (2ik)^{-n}(\nu_0+\hat{b}(k) )^{n} ,
\end{align*}
where $\hat{b}(k)=\hat{\beta}(-2k)$ and 
$\beta(y)= \int V(t) \chi_{(0,\infty)}(y) B_+(t,y) dt$, which is in $L^1(\R_+)$ 
 by Lemma \ref{l:B-L1} (a).

Let \begin{equation}\label{e:al-rt}
\al_\pm(k)= (1+r_\pm(k))t(k)^{-1} , \end{equation}
then there exist $a_{\pm}\in L^1(\R)$ such that
\begin{align}\label{e:al-nu0-ak}
\al_\pm(k)= 1-  \frac{\nu_0}{2ik}+  \frac{1}{2ik}\hat{a_\pm}(k)\, \qquad\forall k\neq 0 .
\end{align}
Indeed, similar to the way we deal with $t(k)$, write
\begin{align*}
&\al_+(k)=1+\frac{1}{2ik}\int (e^{-2ikt}-1)   V(t)m_-(t,k)dt\\ 
=&1+  \frac{1}{2ik}\int (e^{-2ikt}-1)   V(t)dt + 
\frac{1}{2ik}\int (e^{-2ikt}-1)   V(t)dt
\int^0_{-\infty}  B_-(t,y) e^{-2iky}dy \\
=& 1-  \frac{\nu_0}{2ik}+  \frac{1}{2ik}  \hat{V}(2k) + 
\frac{1}{2ik}[ \big( \chi_{(-\infty,\iy)}(y)\int^{\iy}_yV(t)B_-(t,y-t)dt\big)^\wedge(2k)\\
-&\big( \chi_{(-\infty,0)}(y)   \int V(t)
 B_-(t,y)dt \big)^\wedge(2k) ] .   \end{align*}     
 It is easy to see from Lemma \ref{l:B-L1} (a) that the last two functions of $y$ in the parentheses
are in $L^1$ if $V\in L^1_1$. 
Thus (\ref{e:al-nu0-ak}) holds for $\al_+(k)$ with some $a_+\in L^1$, and so \begin{align*}
&r_+(k)= \al_+(k)t(k)-1\\
=&(-\nu_0+\hat{a_+}(k)) \sum_{n=1}^\infty (2ik)^{-n}(\nu_0+\hat{b}(k))^{n-1} . 
\end{align*} 
Similarly we obtain the formulas for $\al_-(k)$ and $r_-(k)$. \hB
 
\section{Weighted $L^\iy$ estimates: Low energy}\label{s:wei-infty-low}  
In this section we prove (\ref{e:phi-dec-rhoj-zetaj})  for $\Phi_j(H_{ac})(x,y)$ for $j< j_0$, where
$j_0$ is taken to be the same number as in Lemma \ref{l:high-ker-dec-mu}. 
Recall that $\Psi_j(k)=\Phi_j(k^2)=\Phi(2^{-j}k^2)$. The following lemma gives Fourier transform formulas of $t,r_\pm$ for the low energy.  
\begin{lemma}\label{l:low-t-r}  
a) Let $V\in {L^1_1}$ and $\nu\neq 0$. 
 Then there exist $f_1,g_{1,\pm}\in L^1$ such that for all $j< j_0$,
\[(\Psi_j(k)t(k) )^\vee(u)= \Psi_j^\vee* f_1 (u)   
\] 
\[(\Psi_j(k)r_\pm(k) )^\vee(u)= \Psi_j^\vee* (g_{1,\pm}-\de) (u) ,  
\]  
equivalently, 
\begin{align*} 
&\Psi_j(k)t(k)= \Psi_j(k) \hat{f}_1(k) \\
&\Psi_j(k)r_\pm(k)= \Psi_j(k) (\hat{g}_{1,\pm}(k)-1) . \end{align*} 
b) Let $V\in L^1_2$ and $\nu=0$. Then there exist $f, g_\pm$ in $L^1$ such that 
\begin{align*} 
& t(k)= 1+\hat{f}(k)\\
& r_\pm(k)=\hat{g_\pm}(k) .     
\end{align*}
\end{lemma}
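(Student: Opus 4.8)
The plan is to prove both parts by reducing the claimed identities to membership in the Wiener algebra $\mathcal{A}=\C\oplus\widehat{L^1}$ of a low-energy truncation of $t$ and $1+r_\pm$, and then inverting with Wiener's lemma. Fix a smooth $\eta$ with $\eta\equiv1$ on $[-2^{j_0/2},2^{j_0/2}]$ and compact support; since $\supp\Psi_j\subset\{|k|\le 2^{j/2}\}$ for $j<j_0$, the stated formulas follow once I produce $f_1,g_{1,\pm}\in L^1$ with $\widehat{f_1}=\eta\, t$ and $\widehat{g_{1,\pm}}=\eta(1+r_\pm)$, because on $\supp\Psi_j$ one has $\eta\equiv1$. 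Recall $t(k)=2ik/W(k)$, and from Lemma \ref{l:t-r-m-B} together with Lemma \ref{l:B-L1} I can write $-W(k)=2ik\,t(k)^{-1}=2ik-\nu-2ik\,\widehat{\psi}(k)$ with $\psi\in L^1$, the factor $e^{2iky}-1$ in Lemma \ref{l:t-r-m-B} supplying the cancellation of $1/k$ that turns the $B_+$-integral into the transform of an $L^1$ function. An analogous computation, using the expressions for $r_\pm t^{-1}$ and $\alpha_\pm$ in Lemmas \ref{l:t-r-m-B} and \ref{l:high-al-t-r}, writes $r_\pm(k)=N_\pm(k)/W(k)$ with $N_\pm$ a constant plus an $\widehat{L^1}$ term plus $2ik\,\widehat\psi$.

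For part (a) the no-resonance hypothesis $\nu=W(0)\neq0$ is decisive. First I note $W(k)\neq0$ for every real $k$: for $k\neq0$ the Jost solutions $f_\pm(\cdot,k)$ are linearly independent by their distinct exponential asymptotics, so their Wronskian cannot vanish, while $W(0)=\nu\neq0$ handles the origin. Thus $1/W$ is continuous on $\R$. Since $W$ grows like $2ik$ and is not in $\mathcal{A}$, I replace it by a regularization $\widetilde W:=\eta_1 W+(1-\eta_1)\nu$, where $\eta_1\equiv1$ on $\supp\eta$ has slightly larger compact support; then $\widetilde W=\nu+\widehat{G}$ with $G\in L^1$, $\widetilde W(\infty)=\nu\neq0$, and $\widetilde W=W$ on $\supp\eta$. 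Wiener's lemma gives $1/\widetilde W\in\mathcal{A}$. Finally $\eta\, t=(\eta\cdot2ik)\,\widetilde W^{-1}$ and $\eta(1+r_\pm)=\eta+(\eta N_\pm)\,\widetilde W^{-1}$ are products and sums of functions in $\widehat{L^1}$ and $\mathcal{A}$, hence lie in $\widehat{L^1}$ (note $\eta\cdot2ik$, $\eta\cdot2ik\widehat\psi$ and $\eta N_\pm$ all belong to $\widehat{L^1}$ by localization); reading off the inverse transforms yields $f_1$ and $g_{1,\pm}$, and restricting to $\supp\Psi_j$ gives the two identities.

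For part (b), with $\nu=0$ the $1/(2ik)$ pole disappears from $t^{-1}$: Lemma \ref{l:t-r-m-B} now gives $t(k)^{-1}=1-\widehat\psi(k)\in\mathcal{A}$ with $\widehat\psi(\infty)=0$, where the stronger decay $V\in L^1_2$ is used both to guarantee the requisite $L^1$-membership via the corresponding moment bounds on $B_\pm$, and, crucially, to ensure $W'(0)\neq0$ so that $t(k)=2ik/W(k)$ is regular and nonvanishing at the origin. Since $t^{-1}$ is then nonvanishing on all of $\R$ (again $W(k)\neq0$ for $k\neq0$, and the simple zero of $W$ at $0$ cancels that of $2ik$), the global Wiener lemma yields $t=1+\widehat{f}$ with $f\in L^1$. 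The reflection coefficients are $r_\pm=t\cdot(r_\pm t^{-1})$, and the $L^1_2$-version of the $r_\pm t^{-1}$ formula shows $r_\pm\in\widehat{L^1}$, i.e. $r_\pm=\widehat{g_\pm}$.

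The main obstacle is concentrated in part (a): proving $W(k)\neq0$ for all real $k$ and packaging $1/W$ into the Wiener algebra. The non-vanishing on $\R\setminus\{0\}$ is standard scattering theory, but the construction of $\widetilde W\in\mathcal{A}$ that is simultaneously nonvanishing everywhere (including the transition region of $\eta_1$) and equal to $W$ on the required band $[-2^{j_0/2},2^{j_0/2}]$ is the delicate point; the cutoffs must be chosen so that $\eta_1W+(1-\eta_1)\nu$ never vanishes. In part (b) the analogous difficulty is the $k=0$ degeneracy created by $\nu=0$, which is exactly what the extra moment $V\in L^1_2$ is there to control.
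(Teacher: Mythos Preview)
Your overall strategy---reduce to membership in the Wiener algebra $\mathcal A=\C\oplus\widehat{L^1}$ and invert---is the same as the paper's, and part (b) matches essentially line for line.  In part (a), however, there is a genuine slip: the representation $-W(k)=2ik-\nu-2ik\,\hat\psi(k)$ with $\psi\in L^1$ requires $V\in L^1_2$, not $L^1_1$.  Indeed, the cancellation $\frac{e^{2iky}-1}{2ik}=\int_0^y e^{2ik\xi}\,d\xi$ leads to $\psi(\xi)\sim\chi_{(0,\infty)}(\xi)\int V(t)\int_\xi^\infty B_+(t,y)\,dy\,dt$, and Lemma~\ref{l:B-L1}(b)---which needs the extra moment---is what puts this in $L^1$.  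Under $L^1_1$ alone the available identity is $W(k)=-2ik+\nu_0+\hat b(k)$ with $b\in L^1$ (via Lemma~\ref{l:B-L1}(a)); this is the form the paper uses, and your conclusion $\widetilde W\in\mathcal A$ still follows from it, so the strategy survives once the formula is corrected.

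The second difference is methodological.  The paper avoids your regularization $\widetilde W=\eta_1 W+(1-\eta_1)\nu$ entirely by invoking the \emph{local} Wiener lemma (Lemma~\ref{l:Wiener}): since $\Psi_{j_0}$ has compact support and $\chi W\in\widehat{L^1}$ is nonvanishing there, one gets $\Psi_{j_0}/(\chi W)=\hat f_0$ directly.  This sidesteps the transition-region nonvanishing problem you correctly flag as delicate.  Your construction can be salvaged---for instance by placing the transition of $\eta_1$ at large $|k|$, where $\Im W(k)\approx-2k$ dominates the real perturbation $(1-\eta_1)\nu$, so that $\widetilde W$ cannot vanish---but the compact-support Wiener lemma is the cleaner tool and is exactly what the paper deploys.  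Your remark that $V\in L^1_2$ forces $W'(0)\neq 0$ in part (b) is correct and is equivalent to the paper's citation of the fact that $t(0)\neq 0$ in the resonance case.
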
 
We postponed the proof till Subsections \ref{ss:pf-tr-a} and \ref{ss:pf-tr-b}.
Combining Lemma \ref{l:low-t-r} and Lemma \ref{l:m+m-rho} we readily obtain the following lemma. 
\begin{lemma}\label{l:low-t-m+m-reson}   
a) Let $V\in L^1_1$ and $\nu\neq 0$. Then there exist positive functions $h_1$,
$h_2$ and $h_3$ in $L^1$ independent of $x,y$ such that\\ 
(i) $\forall x>0, y<0$,  \begin{align*} |(\Psi_j(k) t)^\vee *m_+(x,\cdot)^\vee*m_-(y,\cdot)^\vee(u)|
 \lesssim |\Psi_j^\vee|*(\de+h_1)(u)\end{align*}
(ii) $\forall x>0, y>0$, \begin{align*}
 |(\Psi_j(k)r_+)^\vee *m_+(x,\cdot)^\vee*m_+(y,\cdot)^\vee(u)|
 \lesssim |\Psi_j^\vee|*(\de+h_2)(u)\end{align*} 
(iii) $\forall x<0, y<0$,  \begin{align*} |(\Psi_j(k)r_-)^\vee *m_-(x,\cdot)^\vee*m_-(y,\cdot)^\vee(u)|
\lesssim |\Psi_j^\vee|*(\de+h_3)(u) .
\end{align*}  
b) Let $V\in L^1_2$ and $\nu=0$. Then there exist positive functions 
$f_1$, $f_2$ and $f_3$ in $L^1$, independent of $x,y$, such that\\ 
(i)  $\forall x>0, y<0$,  \begin{align*}
 |t^\vee *m_+(x,\cdot)^\vee*m_-(y,\cdot)^\vee(u)|\lesssim \de+f_1(u)\end{align*}
(ii)  $\forall x>0, y>0$,  \begin{align*} 
 |r_+^\vee *m_+(x,\cdot)^\vee*m_+(y,\cdot)^\vee(u)|\lesssim \de+f_2(u) \end{align*}
(iii) $\forall x<0, y<0$, \begin{align*} |r_-^\vee *m_-(x,\cdot)^\vee*m_-(y,\cdot)^\vee(u)|\lesssim \de+f _3(u).
\end{align*} \end{lemma}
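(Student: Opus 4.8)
The plan is to obtain Lemma~\ref{l:low-t-m+m-reson} as a pure convolution-calculus consequence of the two preceding lemmas, with no new analytic input. The structural fact I would isolate first is that the class $\R_+\de+L^1(\R)$ (a nonnegative multiple of the Dirac measure at the origin plus a nonnegative $L^1$ function) is closed under convolution: this follows from $\de*\de=\de$, $\de*L^1=L^1$, and Young's inequality $L^1*L^1\subset L^1$, together with the observation that the $\de$-coefficient of a convolution is the product of the $\de$-coefficients of the factors. I would also record the two elementary pointwise bounds I will use repeatedly: $|F*G|(u)\le(|F|*|G|)(u)$, and the monotonicity of convolution with respect to positive measures, i.e.\ if $|F|\le\sigma$ and $|G|\le\tau$ as total-variation measures with $\sigma,\tau\ge0$, then $|F*G|\le\sigma*\tau$.

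For part (a) I would start from Lemma~\ref{l:low-t-r}(a), which writes the low-energy multipliers as $(\Psi_j t)^\vee=\Psi_j^\vee*f_1$ and $(\Psi_j r_\pm)^\vee=\Psi_j^\vee*(g_{1,\pm}-\de)$ with $f_1,g_{1,\pm}\in L^1$ fixed (independent of $x,y$). Combining with Lemma~\ref{l:m+m-rho}, which majorizes each relevant $|m_\pm(x,\cdot)^\vee|$ by the positive measure $2\pi\de+\tilde\rho_\pm$ with $\tilde\rho_\pm\in L^1$ again independent of $x,y$, case (i) becomes, pointwise in $u$,
\[
|(\Psi_j t)^\vee*m_+(x,\cdot)^\vee*m_-(y,\cdot)^\vee|\le|\Psi_j^\vee|*\big(|f_1|*(2\pi\de+\tilde\rho_+)*(2\pi\de+\tilde\rho_-)\big),
\]
and the inner convolution lies in $L^1$ because $f_1$ carries no $\de$; setting $h_1$ equal to it gives the claim (and one may trivially insert the harmless $\de$). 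Cases (ii) and (iii) are identical except that the $-\de$ in $(\Psi_j r_\pm)^\vee$ contributes, after $|g_{1,\pm}-\de|=|g_{1,\pm}|+\de$, a term whose $\de$-coefficient is the finite constant $4\pi^2$; absorbing that constant into $\lesssim$ yields the stated $|\Psi_j^\vee|*(\de+h_{2})$, resp.\ $|\Psi_j^\vee|*(\de+h_{3})$.

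For part (b) the argument is the same but shorter: by Lemma~\ref{l:low-t-r}(b) one has $t^\vee\in\R_+\de+L^1$ and $r_\pm^\vee\in L^1$ outright, with no $\Psi_j$ cutoff needed (the hypotheses $V\in L^1_2$, $\nu=0$ already place $t,r_\pm$ in the Wiener-type class on the whole line). Convolving these against the majorants $2\pi\de+\tilde\rho_\pm$ of Lemma~\ref{l:m+m-rho} and collecting the single surviving $\de$ term produces the bounds $\de+f_i$ directly.

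The step requiring the most care --- and the only place uniformity could fail --- is verifying that the majorants $h_i$, $f_i$ can be chosen independently of $x,y$; this is exactly what the $x,y$-independence of $\rho^\pm$ in Lemma~\ref{l:m+m-rho} and of $f_1,g_{1,\pm},f,g_\pm$ in Lemma~\ref{l:low-t-r} guarantees, so the only genuine bookkeeping is tracking the finite $\de$-coefficients and confirming that the residual absolutely continuous parts stay in $L^1$. I do not expect any obstacle beyond this accounting, which is why the statement is flagged as ``readily obtained.''
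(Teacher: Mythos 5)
Your proposal is correct and follows essentially the same route as the paper: the paper gives no separate proof, stating only that the lemma is ``readily obtained'' by combining Lemma \ref{l:low-t-r} with Lemma \ref{l:m+m-rho}, which is precisely the convolution bookkeeping (closure of $\R_+\de+L^1$ under convolution, majorization by positive measures, absorption of the finite $\de$-coefficients into $\lesssim$) that you carried out. Your attention to the $x,y$-independence of the majorants is exactly the point the paper's cited lemmas are designed to guarantee.
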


Thus the estimate in (\ref{e:phi-dec-rhoj-zetaj}) for the low energy cutoff
follows from Lemma \ref{l:low-t-m+m-reson} by proceeding the way similar to (but much simpler than) the high energy case
in Subsection \ref{ss:weiLiy-hi-Phij}. 
\begin{lemma}\label{l:low-ker-dec-mu}  Let $V\in L^1_1$ and $H_V$ has
no resonance at zero or $V\in L^1_2 $. 
Then 
there exist a finite measure $d\zeta_{low}\in \R_+\de+L^1$ such that for all $j< j_0$ 
\begin{align*} 
|\Phi_j(H_{ac})(x,y)|\le c\sum_\pm\int \vert \Psi_j^\vee(\pm x\pm y-u)\vert d\zeta_{low}(u) .  
\end{align*}  
\end{lemma}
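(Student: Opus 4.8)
The plan is to follow the same three-case scheme used for the high energy estimate in Lemma~\ref{l:high-ker-dec-mu}, but the argument is genuinely simpler because no geometric series in $k\inv$ is needed: the subtle behaviour of $t(k)$ and $r_\pm(k)$ near $k=0$ has already been packaged into Lemma~\ref{l:low-t-m+m-reson}, where the products $\Psi_j t$ and $\Psi_j r_\pm$ (resonance-free case) or $t,r_\pm$ themselves (resonance case) are controlled. I would begin from the kernel formula~(\ref{e:ker-phi-m}) with $\phi=\Phi_j$, using $\Phi_j(\lam^2)=\Psi_j(\lam)$, so that
\begin{equation*}
\Phi_j(H_{ac})(x,y)=\frac{1}{2\pi}\int \Psi_j(k)\,t(k)\,m_+(x,k)\,m_-(y,k)\,e^{ik(x-y)}\,dk,
\end{equation*}
and then, exactly as before, assume $x>y$ (the case $x<y$ following by symmetry) and split into a) $x>0>y$, b) $x>y>0$, c) $0>x>y$.

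In case a) the integral is the value at $x-y$ of the inverse Fourier transform of a product of band-limited factors, i.e.\ up to an absolute constant $(\Psi_j t)^\vee*m_+(x,\cdot)^\vee*m_-(y,\cdot)^\vee(x-y)$, and Lemma~\ref{l:low-t-m+m-reson} a)(i) (or b)(i) when $V\in L^1_2$, $\nu=0$) bounds this by $c\,|\Psi_j^\vee|*(\de+h)(x-y)$ with $h\in L^1$. Cases b) and c) differ only in that the pairing $m_+m_-$ is now ``mismatched'' (both arguments have the same sign), so I would first invoke Lemma~\ref{l:t-m+m-} to rewrite $t(k)m_-(y,k)=e^{2iky}r_+(k)m_+(y,k)+m_+(y,-k)$ in case b) (and the dual identity for $t(k)m_+(x,k)$ in case c)). This is precisely what produces the $x+y$ arguments: the $e^{2iky}$ term combines with $e^{ik(x-y)}$ to $e^{ik(x+y)}$ and, after inverse transform, lives at $x+y$ where it is controlled by Lemma~\ref{l:low-t-m+m-reson} a)(ii)/(iii); the remaining $m_+(y,-k)$ term stays at $x-y$ and is controlled by Lemma~\ref{l:m+m-rho}. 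In the resonance case the factor $\Psi_j$ simply convolves in, since $t$ and $r_\pm$ are bounded near the origin and handled alone by part b) of Lemma~\ref{l:low-t-m+m-reson}. Collecting the finitely many measures produced --- using that $\R_+\de+L^1$ is closed under convolution and finite sums and that $|\Psi_j^\vee|$ is even --- into a single $d\zeta_{low}\in\R_+\de+L^1$ yields the claimed bound summed over $\pm x\pm y$.

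The step I expect to demand the most care is the sign bookkeeping in cases b) and c): one must track which of $\pm x\pm y$ each piece lands on after the substitution from Lemma~\ref{l:t-m+m-}, and verify that the auxiliary factor $m_+(y,-\cdot)^\vee$ carrying the reflected spectral variable still belongs to $\R_+\de+L^1$. The latter is exactly the point where the two-sided formulation of Lemma~\ref{l:m+m-rho} --- which estimates $m_\pm(\cdot,+k)^\vee$ and $m_\pm(\cdot,-k)^\vee$ simultaneously --- is used; for $y>0$ it gives $m_+(y,-\cdot)^\vee\in\R_+\de+L^1(\R_+)$, whose convolution with $m_+(x,\cdot)^\vee\in\R_+\de+L^1(\R_-)$ is again in $\R_+\de+L^1$. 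Everything else is routine once these convolution identities are in place, which is the sense in which this low energy estimate is ``much simpler'' than its high energy counterpart.
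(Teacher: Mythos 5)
Your proposal is correct and follows exactly the route the paper intends: the paper itself omits the details, stating only that the bound ``follows from Lemma~\ref{l:low-t-m+m-reson} by proceeding the way similar to (but much simpler than) the high energy case,'' which is precisely your three-case scheme using the kernel formula (\ref{e:ker-phi-m}), Lemma~\ref{l:t-m+m-} to generate the $x+y$ terms, and Lemma~\ref{l:m+m-rho} for the leftover $m_+(y,-\cdot)^\vee$ factor. Your sign bookkeeping and the closure of $\R_+\de+L^1$ under convolution are exactly the points the paper treats as routine, so nothing is missing.
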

The detail of the proof is straightforward and hence omitted. 

\subsection{Fourier transforms of $t(k)$, $r_\pm(k)$}
 Lemma \ref{l:low-t-r} 
tells that in the cases of $V\in L^1_1$, $\nu\neq 0$ and $V\in L^1_2$,
low energy cut-offs of $t(k), r_\pm(k)$ are the Fourier transforms of $L^1$ functions up to $c\de$.
We will show that this is true by Wiener's lemma  
\cite[Lemma 6.3]{Katz76}. 
\begin{lemma}\label{l:Wiener} (Wiener)   Let 
$f,  h\in L^1(\R)$.   Suppose $\supp \,\hat{f}$ is compact and
$\hat{h}$ is nonzero on $\supp \,f$.   Then there exists some $g\in L^1(\R)$ such that 
$  \hat{f}= \hat{h} \hat{g} $ or $ \hat{f}/\hat{h}= \hat{g} $.
\end{lemma}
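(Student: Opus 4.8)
The plan is to recognize this as the division form of Wiener's lemma and to prove it inside the Wiener algebra $A(\R)=\{\hat g:g\in L^1(\R)\}$, equipped with the norm $\Vert \hat g\Vert_{A}=\Vert g\Vert_1$, under which it is a commutative Banach algebra with pointwise multiplication (since $\widehat{g_1*g_2}=\hat g_1\hat g_2$). Reading the hypotheses in the way that makes the quotient meaningful, I take $K:=\supp\hat f$ to be compact with $\hat h$ nonvanishing on $K$; the goal is to produce $g\in L^1$ with $\hat f/\hat h=\hat g$, equivalently $\hat f=\hat h\hat g$.

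First I would reduce the global division to a purely local inversion of $\hat h$. Since $\hat h\in C_0(\R)$ by Riemann--Lebesgue and $\hat h\neq 0$ on the compact set $K$, there are $\eta>0$ and a bounded open $U\supset K$ with $|\hat h|\ge \eta$ on $\overline U$. Choose $\phi\in C^\iy_0(\R)$ with $\phi\equiv 1$ on $K$ and $\supp\phi\subset U$; note $\phi\in A(\R)$ because every $C^\iy_0$ function is the Fourier transform of a Schwartz (hence $L^1$) function, and $\hat f=\phi\hat f$ since $\phi\equiv1$ on $\supp\hat f$. Consequently $\hat f/\hat h=\hat f\cdot(\phi/\hat h)$, so it suffices to show that the compactly supported continuous function $\phi/\hat h$ (set equal to $0$ off $U$) lies in $A(\R)$; the claimed identity then follows because $A(\R)$ is closed under multiplication.

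The main obstacle, and the heart of the matter, is exactly this local reciprocal statement: that $1/\hat h$, cut off by a smooth bump, belongs to $A(\R)$. I would establish it by the standard Wiener argument. Working in the unitization $\C\cdot 1\oplus A(\R)$ (needed because constants do not vanish at infinity and so are not themselves in $A$), I fix a point $\xi_0\in\overline U$ and write $\hat h=\hat h(\xi_0)(1-u)$ near $\xi_0$. Multiplying by a bump $\theta$ supported in a small neighborhood of $\xi_0$ makes $\Vert \theta u\Vert_{A}<1$: this uses the standard estimate that the $A$-norm of a narrow smooth cutoff of a continuous function vanishing at $\xi_0$ can be made arbitrarily small by shrinking the support. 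The Neumann series $\hat h(\xi_0)^{-1}\sum_{n\ge0}(\theta u)^n$ then converges in $\C\cdot 1\oplus A(\R)$ to an element agreeing with $1/\hat h$ on a neighborhood of $\xi_0$, showing that $1/\hat h$ belongs locally to $A$ at every point of $U$. Away from $K$ and near infinity the function $\phi/\hat h$ is identically zero, hence trivially belongs locally to $A$ there.

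Finally I would invoke Wiener's localization principle: a continuous function that belongs locally to $A(\R)$ at every point of $\R$ and at infinity is globally in $A(\R)$. Covering the compact set $\overline U$ by finitely many of the neighborhoods above and patching with a partition of unity, together with the trivial membership off $K$, yields $\phi/\hat h\in A(\R)$. Thus $\hat f/\hat h=\hat f\cdot(\phi/\hat h)=\hat g$ for some $g\in L^1(\R)$, which is the assertion. All of the ingredients---the Banach-algebra structure of $A(\R)$, the small-norm cutoff estimate, and the localization principle---are contained in \cite[Lemma 6.3]{Katz76} and its surrounding discussion, so the proof reduces to assembling them in this order.
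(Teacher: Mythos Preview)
Your argument is correct and follows the classical route through the Wiener algebra $A(\R)$: local inversion by Neumann series in the unitization, then patching via Wiener's localization principle. You also rightly interpret the hypothesis as ``$\hat h$ nonzero on $\supp\hat f$'' (the printed ``$\supp f$'' is a slip).

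As for comparison with the paper: there is nothing to compare. The paper does not prove Lemma~\ref{l:Wiener}; it merely quotes it from \cite[Lemma~6.3]{Katz76} (and likewise quotes the variant Lemma~\ref{l:Wiener-Stein} from \cite{St70}). Your write-up is essentially a sketch of the proof one finds in Katznelson, so you have supplied what the paper deliberately outsourced.
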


The following variant of Wiener's lemma 
can be found in e.g. \cite[Ch.V, $\S$3]{St70}.   
\begin{lemma}\label{l:Wiener-Stein} Let $g\in L^1(\R)$ such that 
$\hat{g}(x)+1$ is nonzero for all $x$.  Then there exists a function $f\in L^1(\R)$ such that
\[  \hat{f}(x)+1= \frac{1}{\hat{g}(x)+1}\,. 
\]
\end{lemma}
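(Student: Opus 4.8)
The plan is to recognize Lemma \ref{l:Wiener-Stein} as the classical Wiener $1/f$ inversion theorem for the unitized Wiener algebra $A(\R):=\{\hat g:g\in L^1(\R)\}$, equipped with pointwise multiplication and the norm $\Vert\hat g\Vert_{A}:=\Vert g\Vert_1$, and to deduce it from the localized division result (Lemma \ref{l:Wiener}) by a local-to-global argument. Write $F:=1+\hat g$. By the Riemann--Lebesgue lemma $\hat g(x)\to 0$ as $|x|\to\iy$, so $F$ is continuous, tends to $1$ at infinity, and is nowhere zero by hypothesis; hence on the one-point compactification $\R\cup\{\iy\}$ one has $\delta:=\inf_x|F(x)|>0$. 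The goal is to prove $F^{-1}-1\in A(\R)$, for then $1/(1+\hat g)=1+\hat f$ with $f\in L^1(\R)$, and the limiting value $1$ at infinity pins down the additive constant.

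First I would establish local invertibility near infinity. Choosing $R$ so large that $|\hat g(x)|<1/2$ for $|x|>R$, the Neumann series $\sum_{n\ge 0}(-\hat g)^n$ represents $F^{-1}$ on this region, and after multiplication by a smooth cutoff equal to $1$ near infinity it converges in the $A$-norm; thus $F^{-1}$ agrees near $\iy$ with a member of $1+A(\R)$. Next, local invertibility at a finite point $x_0$: set $z_0:=F(x_0)\neq 0$ and $w:=(\hat g-\hat g(x_0))/z_0\in A(\R)$, so that $w(x_0)=0$ and $F=z_0(1+w)$. Since $w$ is continuous with $w(x_0)=0$, one can pick a smooth cutoff $\eta$ supported in a small neighborhood of $x_0$ with $\Vert\eta w\Vert_{A}<1$; then $z_0^{-1}\eta\sum_{n\ge 0}(-w)^n$ is a genuine element of $A(\R)$ coinciding with $F^{-1}$ near $x_0$. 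In this way $F^{-1}$ belongs locally to $\C\cdot 1+A(\R)$ at every point of $\R\cup\{\iy\}$.

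Finally I would glue these local representations. By the localization principle that underlies Lemma \ref{l:Wiener}---a bounded continuous function coinciding near each point of $\R$ with an element of $A(\R)$, and near infinity with a constant plus an element of $A(\R)$, is itself of the form $c\cdot 1+\hat f$ with $f\in L^1(\R)$---we conclude $F^{-1}\in\C\cdot 1+A(\R)$. Letting $|x|\to\iy$ in $F^{-1}=c+\hat f$ forces $c=1/1=1$, whence $\frac{1}{1+\hat g}=1+\hat f$ with $f\in L^1(\R)$, as claimed.

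The main obstacle is the passage from pointwise smallness to $A$-norm smallness, both in the local step and in the gluing. The pointwise bound $|w|<1/2$ does not by itself make the Neumann series converge in $A(\R)$; what is needed is $\Vert\eta w\Vert_{A}<1$, and this is secured by shrinking the support of the cutoff $\eta$, using that $\Vert\eta w\Vert_{A}\to 0$ as the support contracts to $x_0$ (a consequence of $w(x_0)=0$, continuity of translation on $L^1$, and uniform $L^1$-control of the inverse transforms of the cutoffs). Controlling these $A$-norms well enough that the finite subcover furnished by compactness of $\R\cup\{\iy\}$ assembles into a single global $L^1$ function is the crux; once that is in hand the algebraic identity is immediate.
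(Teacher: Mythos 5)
You should first be aware that the paper does not prove this lemma at all: it is stated as a known variant of Wiener's lemma, with the proof deferred to Stein \cite[Ch.~V, \S 3]{St70}, just as the companion Lemma~\ref{l:Wiener} is quoted from Katznelson \cite{Katz76}. So there is no in-paper argument to compare with; what your proposal does is reconstruct the classical localization proof contained in those references, and in outline it is correct: invertibility of $F=1+\hat{g}$ in the unitized Wiener algebra $A(\R)$ is a local property over the one-point compactification $\R\cup\{\infty\}$, local inverses exist at every point because $F$ is nowhere zero, and a partition of unity subordinate to a finite subcover glues them into a global element of $\C\cdot 1+A(\R)$, the constant being forced to equal $1$ at infinity by Riemann--Lebesgue. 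The one step where your write-up, taken literally, would fail is the point at infinity: the pointwise bound $|\hat{g}(x)|<1/2$ for $|x|>R$ gives no control on any $A$-norm, multiplying the Neumann series $\sum(-\hat{g})^n$ by a cutoff does not make it converge in $A(\R)$, and pushing the support of the cutoff toward infinity does not help by itself. The standard repair --- which is also what makes your finite-point step rigorous --- is the splitting $g=g_1+g_2$ with $\Vert g_2\Vert_1<1/2$ and $\hat{g}_1$ compactly supported (Fej\'er approximation, i.e.\ the continuity of translation in $L^1$ that you invoke): outside $\supp\hat{g}_1$ one has $F=1+\hat{g}_2$, and $\sum_{n\ge 1}(-\hat{g}_2)^n$ converges in $A$-norm globally, so $F^{-1}$ agrees near infinity with an element of $1+A(\R)$; at a finite point $x_0$ the same splitting together with a bound such as $\Vert\psi\Vert_{A}\le c\,\Vert\psi\Vert_2^{1/2}\Vert\psi'\Vert_2^{1/2}$ applied to $\eta_\delta(\hat{g}_1-\hat{g}_1(x_0))$ yields $\Vert\eta_\delta w\Vert_A\to 0$ as the support shrinks. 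With these details supplied, your local-to-global scheme is exactly the textbook proof; the alternative, slicker route covered by the same citations is Gelfand theory, in which $\R\cup\{\infty\}$ is identified with the maximal ideal space of the unitization of $(L^1(\R),*)$ and nonvanishing of the Gelfand transform of $\delta+g$ gives invertibility at once.
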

Recall from \cite[Theorem 1]{DT79} that (i) if 
$\nu=0$, then $t(k)\neq 0$, $\forall k$. 
(ii) if 
$\nu\neq 0$, then $t(0)=0$ but $t(k)\neq 0$, $\forall k\neq 0$ 
(cf. also Lemma \ref{l:trm-asym-low}).

Since $W(k)=-2ik/t(k)$, by Lemma \ref{l:t-r-m-B} 
\begin{align}
W(k)
 =& -2ik (1-\frac{\nu}{2ik}- \int V(t) dt(\int_0^\infty B_+(t,y)\frac{e^{2iky}-1}{2ik} dy)\label{e:W-vb-1}\\
 =& -2ik (1-\frac{\nu_0}{2ik}- \int V(t) dt\int_0^\infty B_+(t,y)\frac{e^{2iky}}{2ik} dy) .\label{e:W-vb}
\end{align} 

\subsection{Proof of Lemma \ref{l:low-t-r} (a)}\label{ss:pf-tr-a} In this case          
$\nu=W(0)\neq 0$,  hence $W(k)\neq 0$, $\forall k$. 
 Write 
\begin{align}\label{e:Psi-t-k-W}
&\Psi_{j_0}(k)t(k)
= \frac{-2ik\Psi_{j_0}(k)}{\chi(k)W(k)} ,
\end{align}
where we take $\chi\in C^\infty_0$ with $\chi(x)=1$ on $\supp\,\Psi_{j_0}$. 
From (\ref{e:W-vb}) we have \begin{align*}  W(k)
 = 
 -2ik+{\nu_0}+ \big( \chi_{(0,\infty)}(\cdot) \int V(t) B_+(t,\cdot)dt\big)^\vee(2k) ,
\end{align*}
where we note that in terms of Lemma \ref{l:B-L1} (a), 
 the function $y\mapsto   \chi_{(0,\infty)}(y)\int V(t) B_+(t,y)dt $ is in $L^1$ provided
 $V\in L^1_1$.
 Thus we find that $\chi W$, which is nonzero on the support of $\Psi_{j_0}$,    
  is the Fourier transform of
 an $L^1$ function.   According to Wiener's lemma (Lemma \ref{l:Wiener}), 
 \begin{equation}\label{e:Psi-t-f}
 \Psi_{j_0}(k)t(k) = \hat{f}_1(k) \end{equation}
for some $f_1\in L^1$.    Hence for $j< j_0$
\begin{align*}
(\Psi_j(k)t(k) )^\vee=c\Psi_j^\vee * (\Psi_{j_0}t(k))^\vee
=  \Psi_j^\vee* f_1 ,
\end{align*} where note that $\Phi_{j_0}(k)\equiv 1$ on support of $\Phi_j(k)$.

Let  
$\al_\pm(k)= (1+r_\pm(k))t(k)^{-1}$, then 
 \begin{align*} &\Psi_j(k) r_\pm(k)=  \Psi_j(k) \al_\pm(k)t(k)-\Psi_j(k) . \end{align*}
It is sufficient to deal with the first term.  
By (\ref{e:al-nu0-ak}) 
there exist $a_\pm\in L^1$ such that
\begin{align*}
2ik\al_\pm(k)
=&  2ik- \nu_0+
\hat{a}_\pm(k) . \end{align*} 
Thus $\Psi_{j_0}(k)(-2ik)\al_\pm(k) = \hat{g_{0,\pm}}(k) $ for some $g_{0,\pm}\in L^1$.
We have for $j<j_0$,   
\begin{align*}
(\Psi_j(k) \al_\pm(k)t(k))^\vee  
=&  c\Psi_{j}^\vee*(\Psi_{j_0}(k)\frac{t(k)}{-2ik})^\vee *(\Psi_{j_0}(k)(-2ik)\al_\pm(k) )^\vee \\
=& c\Psi_{j}^\vee*(\frac{\Psi_{j_0}(k)}{W(k)})^\vee*g_{0,\pm} \\
=& \Psi_{j}^\vee*g_{1,\pm} , \qquad g_{1,\pm}=c{f_0}* g_{0,\pm}
\end{align*} 
where  in view of (\ref{e:Psi-t-k-W}), the same way as showing (\ref{e:Psi-t-f}) we see that $\frac{\Psi_{j_0}(k)}{W(k)}=\hat{f_0}$
for some $f_0\in L^1$. This proves that for $j<j_0$
\[ \Psi_j(k) r_\pm(k)= \Psi_{j}(k)({\hat{g_{1,\pm}}(k)}-1) .\]   \hB 

\subsection{Proof of Lemma \ref{l:low-t-r} (b)}\label{ss:pf-tr-b} First we observe the following formula when $\nu=0$,
\begin{align}\label{e:t-inv-VB}
t(k)^{-1}-1 =&
-\int V(t)dt \int_0^\infty (\int_\xi^\infty B_+(t,y)dy)  e^{2ik\xi} d\xi\\
=&-\big(  \chi_{(0,\infty)}(\xi)\int V(t)dt \int_\xi^\infty B_+(t,y)dy\big)^\vee(2k) .\notag 
\end{align}
Indeed, 
since $\nu=0$, we have by (\ref{e:W-vb-1})
\begin{align*}
 t(k)^{-1}
=1-\int V(t)dt \int_0^\infty B_+(t,y) \frac{e^{2iky}-1}{2ik} dy .
\end{align*}
Then 
 (\ref{e:t-inv-VB}) 
follows by using $\frac{e^{2iky}-1}{2ik} =\int_0^y e^{2i k\xi}d\xi$ and 
Fubini theorem.

Since $V\in L^1_2$, Lemma  \ref{l:B-L1} (b) implies that the function given by\\
$\xi\mapsto  \chi_{(0,\infty)}(\xi)\int V(t)dt \int_\xi^\infty B_+(t,y)dy$
belongs to $L^1$. Hence $t(k)\inv-1=\hat{g}_0(k)$ for some $g_0\in L^1$.

Now by Lemma \ref{l:Wiener-Stein} there exists $h\in L^1$ (evidently $1+\hat{g}_0(k)=t(k)\inv\neq 0$, $\forall k$)
so that
\[ t(k)=\frac{1}{1+\hat{g}_0(k)}=1+\hat{h}(k).
\]

A similar argument shows that there exists some $\om_\pm\in L^1$ so that 
$\al_\pm(k)=(1+r_\pm(k))t(k)^{-1}= 1+\hat{\om_\pm}(k)$
by applying Lemma \ref{l:B-L1}.  
Therefore $r_\pm= \al_\pm t-1
= \hat{\om_\pm}+\hat{h}+\hat{\om_\pm}\hat{h} $ are in $(L^1)^\wedge$.  \hB 

\subsection{Marchenko equation}\label{e:B-march}  From Lemma \ref{l:t-r-m-B} we know that for each $x$, $B_\pm(x, y)$ 
are the Fourier transforms of $m_\pm(x,\pm k)-1$. 
They are 
real-valued, supported in $\R_\pm$ and belong to $L^2(\R_\pm)$  
\cite{DT79,W99}. 
Moreover, 
$B_\pm(x,y)$ satisfy the {\em Marchenko equations}
\begin{equation}\label{e:B+marchenko}
B_+(x,y)= \int_{x+y}^\infty V(t)dt+ \int_0^y dz \int_{t=x+y-z}^\infty V(t) B_+(t,z) dt 
\end{equation}
\begin{equation}\label{e:B-marchenko}
B_-(x,y)= \int^{x+y}_{-\infty} V(t)dt+ \int^0_y dz \int^{t=x+y-z}_{-\infty} V(t) B_-(t,z) dt .
\end{equation} 

The following lemma is mainly on certain weighted $L^1$ inequalities for $B_\pm$, which contributes to several 
kernel estimates as we have seen. 
\begin{lemma}\label{l:B-L1} 
(a) If $V\in L^1_1$,  then there exists $c=c(\Vert V \Vert_{L^1_1} )$ so that 
for all $x\in \R$ 
\begin{equation}
\int_{-\iy}^\iy | B_\pm(x,y) | dy\le c (1+\max(0, \mp x)) .
\end{equation}
(b) If $V\in L^1_2$,  then there exists $c=c(\Vert V \Vert_{L^1_2} )$ so that 
for all $x\in \R$ 
\begin{equation}
\int_{-\iy}^\iy |y|  | B_\pm(x,y) | dy\le c (1+\max(0, \mp x))^2 . 
\end{equation}
(c) Let $V\in L^1_1$, then for all $x,y\in\R$
\begin{align}\label{e:B+-rho}
&|B_\pm(x,y)| \le e^{\ga^\pm(x)} \rho^\pm(x+y) ,
\end{align}
where $\ga^+(x)=\int^{\infty}_x (t-x)|V(t)|dt$, 
$\ga^-(x)=\int_{-\infty}^x (x-t)|V(t)|dt$, $\rho^\pm$ are as in Lemma $\ref{l:m+m-rho}$. 
\end{lemma}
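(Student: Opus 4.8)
The plan is to treat only $B_+$; the statements for $B_-$ then follow by the reflection $x\mapsto -x$, $V(\cdot)\mapsto V(-\cdot)$, which interchanges $B_+\leftrightarrow B_-$, $\rho^+\leftrightarrow\rho^-$, $\ga^+\leftrightarrow\ga^-$ and $\max(0,-x)\leftrightarrow\max(0,x)$. I would solve the Marchenko equation (\ref{e:B+marchenko}) by Volterra iteration: set $B_+^{(0)}(x,y)=\int_{x+y}^\iy V(t)\,dt$ and $B_+^{(n+1)}(x,y)=\int_0^y\!\int_{x+y-z}^\iy V(t)B_+^{(n)}(t,z)\,dt\,dz$, so that $B_+=\sum_{n\ge 0}B_+^{(n)}$. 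The one geometric fact I would use repeatedly is that on the domain of integration $t\ge x+y-z\ge x$ and $t+z\ge x+y$, so that (since $\rho^+$ and $\ga^+$ are nonincreasing) $\rho^+(t+z)\le\rho^+(x+y)$ and $\ga^+(t)\le\ga^+(x)$.

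For part (c) I would prove by induction the bound $|B_+^{(n)}(x,y)|\le\frac{(\ga^+(x))^n}{n!}\rho^+(x+y)$, the base case being $|B_+^{(0)}(x,y)|\le\rho^+(x+y)$. For the inductive step I would insert the hypothesis, bound $\rho^+(t+z)\le\rho^+(x+y)$ and carry out the $z$-integration first; a short computation gives the inner integral $\le(t-x)\rho^+(x+y)$, leaving $\frac{\rho^+(x+y)}{n!}\int_x^\iy(t-x)|V(t)|(\ga^+(t))^n\,dt$. The factorial is then recovered from two elementary identities: $\int_x^\iy(t-x)|V(t)|(\ga^+(t))^n\,dt\le\int_x^\iy \rho^+(t)(\ga^+(t))^n\,dt$ (because $\ga^+$ is decreasing one has $(t-x)(\ga^+(t))^n\le\int_x^t(\ga^+(s))^n\,ds$, followed by Fubini), and $\int_x^\iy\rho^+(t)(\ga^+(t))^n\,dt=\frac{(\ga^+(x))^{n+1}}{n+1}$ (since $\frac{d}{dt}\ga^+(t)=-\rho^+(t)$). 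This closes the induction with the exact constant $\frac1{(n+1)!}$, and summing the series yields $|B_+(x,y)|\le e^{\ga^+(x)}\rho^+(x+y)$.

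For part (a), with $P(x):=\int_0^\iy|B_+(x,y)|\,dy$ (finite a priori by (c)), I would integrate (\ref{e:B+marchenko}) in $y$. The free term contributes $\int_0^\iy\rho^+(x+y)\,dy=\ga^+(x)$, and the change of variables $s=y-z$ turns the quadratic term into $\int_x^\iy(t-x)|V(t)|P(t)\,dt$, giving the backward Volterra inequality $P(x)\le\ga^+(x)+\int_x^\iy(t-x)|V(t)|P(t)\,dt$. The crucial point, and the main obstacle, is that one must \emph{not} simply integrate the pointwise bound from (c): that route, like a naive iteration of this inequality, only gives $P(x)\lesssim e^{\ga^+(x)}$, which grows \emph{exponentially} in $|x|$ as $x\to-\iy$, far weaker than the asserted linear bound. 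To get the sharp estimate I would first note that for $x\ge0$ one has $(t-x)\le t$ and $\ga^+\le\Vert V\Vert_{L^1_1}$, so Gronwall gives $P(x)\le C$ on $[0,\iy)$. For $x<0$ I would split the integral at $0$: the part over $[0,\iy)$ is an inhomogeneous term of size $O(1+|x|)$, while over $[x,0)$ one uses $(t-x)\le|x|$ and reads the inequality as a differential inequality in the variable $-x$. Here the decisive mechanism is that the resulting Gronwall exponent is $\exp\!\big(\int|t|\,|V(t)|\,dt\big)$, a \emph{finite constant} precisely because $V\in L^1_1$; this tempers the growth from exponential down to the claimed $c(1+\max(0,-x))$.

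Part (b) I would handle in the same spirit, with $P_1(x):=\int_0^\iy y\,|B_+(x,y)|\,dy$. Integrating (\ref{e:B+marchenko}) against the weight $y=z+s$ produces the free term $\tfrac12\int_x^\iy(t-x)^2|V(t)|\,dt=O((1+|x|)^2)$ (here the finite second moment of $V$, i.e. $V\in L^1_2$, is used), together with the same Volterra operator acting on $P_1$ and an inhomogeneous term $\int_x^\iy\tfrac12(t-x)^2|V(t)|P(t)\,dt$ that is already of order $O((1+|x|)^2)$ by the linear bound for $P$ from part (a). Running the same Gronwall argument, whose homogeneous factor is again the finite constant $\exp(\int|t|\,|V(t)|\,dt)$, then upgrades the quadratic inhomogeneous data to $P_1(x)\le c(1+\max(0,-x))^2$. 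Throughout, the only genuinely delicate step is the one already flagged: exploiting the finite first (respectively second) moment of $V$ to keep the Gronwall exponential bounded, which is exactly what separates the sharp polynomial growth from the exponential growth furnished by the pointwise estimate (c).
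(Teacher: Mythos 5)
Your proposal is correct and follows essentially the same route as the paper, which gives no proof of its own here but cites \cite{DT79,GS04} for the Volterra-iteration bound (c) and \cite[Lemma 3.2, Lemma 3.3]{DF06}, based on Gronwall's inequality, for (a) and (b) --- precisely your two mechanisms, with the finite $L^1_1$ (resp.\ $L^1_2$) moment of $V$ keeping the Gronwall exponent bounded. Your observation that (a) cannot be deduced by integrating (c) (which would give exponential growth in $|x|$) correctly identifies why the paper, like \cite[Lemma 4.5]{OZ08}, treats (a) and (b) by a separate Gronwall/iteration argument rather than as corollaries of (c).
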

 Estimate (c) is known, see e.g. \cite{DT79} or \cite{GS04}. 
The estimates in  (a), (b) were obtained in \cite[Lemma 3.2, Lemma 3.3]{DF06}
using Gronwall's inequality. 
See also \cite[Lemma 4.5]{OZ08} for a generalized version
of these inequalities for $n\in \N_0=\{0,1,2,\dots\}$
\begin{equation}
\int_{-\iy}^\iy |y|^n  | B_\pm(x,y) | dy\le c (1+\max(0, \mp x))^{n+1} ,
\end{equation}
$c=c(\Vert V\Vert_{L^1_{n+1}})$, 
which 
follows from direct iterations of (\ref{e:B+marchenko}) and (\ref{e:B-marchenko}).

\subsection{Modified Jost functions} 
Let $h(x,k)=\frac{e^{2ikx}-1}{2ik}$. 
It is well-known that $m_\pm(x,k)$ satisfy the equations
\begin{align}
 &m_+(x,k)=1+\int_x^\infty h(t-x,k)V(t)m_+(t,k)dt\label{e:m+hVm+}\\
 &m_-(x,k)=1+\int^x_{-\infty} h(x-t,k)V(t)m_-(t,k)dt .\label{e:m-hVm-}
\end{align}

\begin{lemma}\label{l:der-m+} Let $V\in L^1_1$. 
Then  \begin{align*}
&|m_\pm(x,k)| \le c(1+\max(0,\mp x))\\ 
&|\dot{m}_\pm(x,k)| \le
 c\frac{1+\max(0,\mp x)}{|k|} \qquad \forall k\neq 0 , 
\end{align*}
where $c=c(\Vert V\Vert_{L^1_1} )$. 
\end{lemma}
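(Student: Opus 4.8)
The plan is to handle the two inequalities separately: I would read off the $L^\infty$ bound directly from the $B_\pm$-representation, and obtain the $k$-derivative bound by differentiating the Volterra equations (\ref{e:m+hVm+}), (\ref{e:m-hVm-}) in $k$.

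For the first inequality I would invoke Lemma \ref{l:t-r-m-B}, which gives $m_+(x,k)=1+\int_0^\infty B_+(x,y)e^{2iky}dy$ and hence $|m_+(x,k)|\le 1+\int_0^\infty|B_+(x,y)|dy$. Since $B_+(x,\cdot)$ is supported in $\R_+$, Lemma \ref{l:B-L1}(a) bounds the last integral by $c(1+\max(0,-x))$, uniformly in $k$; the estimate for $m_-$ is identical, using $B_-$ and the $-$ case of Lemma \ref{l:B-L1}(a). I emphasize that the naive Volterra/Gronwall route applied to (\ref{e:m+hVm+}) would only produce an exponential weight $e^{\gamma^+(x)}$, so passing through $B_\pm$ is precisely what yields the \emph{linear} weight.

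For the derivative I would differentiate (\ref{e:m+hVm+}) in $k$, so that $\dot m_+=\partial_k m_+$ solves the Volterra equation
\begin{equation*}
\dot m_+(x,k)=\int_x^\infty \partial_k h(t-x,k)\,V(t)\,m_+(t,k)\,dt+\int_x^\infty h(t-x,k)\,V(t)\,\dot m_+(t,k)\,dt,
\end{equation*}
with the same kernel $h(t-x,k)V(t)$ as before and an inhomogeneous term driven by $\partial_k h$. Writing $h(s,k)=\int_0^s e^{2ik\xi}d\xi$ one finds $\partial_k h(s,k)=\frac1k\bigl(se^{2iks}-h(s,k)\bigr)$, whence the kernel bounds $|h(s,k)|\le\min(s,1/|k|)$ and $|\partial_k h(s,k)|\le\min(s^2,2s/|k|)$. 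For $|k|\ge k_0:=\Vert V\Vert_1$ the operator with kernel $h(t-x,k)V(t)$ has norm at most $\Vert V\Vert_1/|k|<1$, so $\dot m_+=(I-K)^{-1}F$ with $|\dot m_+|\lesssim|F|$, reducing everything to estimating $F(x,k)=\int_x^\infty\partial_k h(t-x,k)V(t)m_+(t,k)\,dt$ by means of the first inequality; the factor $\partial_k h$ supplies the $1/|k|$ decay.

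The hard part will be producing the linear weight $1+\max(0,\mp x)$ under only the hypothesis $V\in L^1_1$. Splitting the $F$-integral, the range $t\ge 0$ (where $|m_+(t,k)|\le c$) contributes only the first moment $\int_x^\infty(t-x)|V(t)|dt=\gamma^+(x)\le c(1+\max(0,-x))$, which is exactly linear; but on the range $x\le t<0$ the bound $|m_+(t,k)|\le c(1-t)$ and the weight $(t-x)$ in $\partial_k h$ \emph{each} contribute a factor $\max(0,-x)$, so a crude absolute-value estimate yields a quadratic weight $\sim(1+\max(0,-x))^2$, i.e.\ effectively forces $V\in L^1_2$. The delicate point is therefore to exploit the oscillation carried by $e^{2ik(t-x)}$ inside $\partial_k h$ — for instance by an additional integration by parts in $t$, equivalently by re-expressing the $y\partial_y B_+$ contribution in the differentiated $B_+$-representation, which folds back into a Volterra term with kernel $\sim V(t)/k$ rather than producing a second moment — so that only the first moment of $V$ survives and the linear weight is preserved. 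Controlling this cancellation uniformly, together with the $1/|k|$ singularity for small $|k|$, is the main obstacle; the estimate for $\dot m_-$ then follows by the symmetric argument on (\ref{e:m-hVm-}).
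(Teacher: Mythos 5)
A point of comparison first: the paper does not actually prove this lemma --- it is quoted as known, with a pointer to \cite[Lemma 1, p.130]{DT79} and \cite{W99} (see the sentence following Lemma \ref{l:trm-asym-low}), so your attempt must be measured against the standard Deift--Trubowitz argument. Your proof of the first bound is correct and complete: Lemma \ref{l:t-r-m-B} plus Lemma \ref{l:B-L1} (a) give $|m_\pm(x,k)|\le 1+\int |B_\pm(x,y)|\,dy\le c(1+\max(0,\mp x))$ uniformly in $k$.

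The genuine gap is in the second bound: you never prove it. You set up the differentiated Volterra equation correctly, but then declare the estimate of the inhomogeneous term to be "the main obstacle'' and propose, vaguely, to exploit oscillation. The obstacle is illusory --- no cancellation is needed. For $t>x$ one has the pointwise inequality
\begin{equation*}
(t-x)\big(1+\max(0,-t)\big)\le \big(1+\max(0,-x)\big)(1+|t|)
\end{equation*}
(check the cases $0\le x<t$, $x<0\le t$, $x<t<0$; in the last one $t-x\le -x$). Hence, with $|\partial_k h(s,k)|\le 2s/|k|$ and the already-proved bound on $m_+$,
\begin{equation*}
|F(x,k)|\le \frac{2c}{|k|}\int_x^\infty (t-x)\big(1+\max(0,-t)\big)|V(t)|\,dt
\le \frac{2c}{|k|}\big(1+\max(0,-x)\big)\Vert V\Vert_{L^1_1},
\end{equation*}
which is linear, not quadratic: your "crude estimate'' became quadratic only because you froze both weight factors at their suprema over $t\in[x,0]$ before integrating, whereas keeping $1+|t|$ inside the integral lets it be absorbed into $\Vert V\Vert_{L^1_1}$, so that only the factor bounded by $1+\max(0,-x)$ survives. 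The second flaw is your inversion of $I-K$: an operator-norm Neumann series needs $|k|\ge \Vert V\Vert_1$, while the lemma asserts the bound for all $k\neq 0$. Here one must use the Volterra structure rather than operator norms: with $|h(s,k)|\le s$, the same weight inequality, and $Q(x):=\int_x^\infty (1+|t|)|V(t)|\,dt$ (so $Q'=-(1+|t|)|V|$ a.e.), induction gives
\begin{equation*}
|K^nF(x,k)|\le \frac{2c}{|k|}\big(1+\max(0,-x)\big)\frac{Q(x)^{n+1}}{(n+1)!}\,,
\end{equation*}
so $\sum_n K^nF$ converges for every $k\neq 0$ and is at most $\frac{C}{|k|}(1+\max(0,-x))e^{\Vert V\Vert_{L^1_1}}$, the factor $1/|k|$ entering only through $F$. (The same iteration applied directly to (\ref{e:m+hVm+}) also yields the first bound, so your claim that the naive Volterra route produces only an exponential weight is likewise inaccurate; this weight-splitting is exactly how \cite{DT79} and \cite{DF06} argue.) As written, then, the derivative estimate --- precisely the part of the lemma the paper leans on in Lemma \ref{l:trm-asym-low} (b) and in Section \ref{s:wei-L2-lowhi} --- remains unproved in your proposal.
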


\begin{lemma}\label{l:trm-asym-low}  (a) Let $V\in L^1_1$. 
Then:  
(i) $|t(k)|\le 1, \,|r_\pm(k)|\le 1$. \\
(ii) $\dot{t}(k)=O(1/k),\, \dot{r}_\pm(k)=O(1/k)$ as $|k|\to\iy$.
(iii) If $\nu\neq 0$, then $t(k)=O(k)$ as $k\to 0$.  

(b) Let $V\in L^1_1$ and $\nu\neq 0$ or $V\in L^1_2$. 
Then \begin{align*} 
\dot{t}(k)=O(1/k),\, \dot{r}_\pm(k)=
O(1/k) \,\text{as $k\to 0$}.
\end{align*}\end{lemma}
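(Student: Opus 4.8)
The plan is to read every estimate off the integral identities in Lemma~\ref{l:t-r-m-B}, namely $t(k)\inv=1-\frac{1}{2ik}\int V(t)m_+(t,k)\,dt$ and $r_\pm(k)t(k)\inv=\frac{1}{2ik}\int e^{\mp 2ikt}V(t)m_\mp(t,k)\,dt$, and to control the resulting integrals by the pointwise bounds on $m_\pm$ and $\dot m_\pm$ from Lemma~\ref{l:der-m+} together with the weighted $L^1$ bounds on $B_\pm$ from Lemma~\ref{l:B-L1}. Part (a)(i) I would simply quote from the unitarity of the scattering matrix, $|t(k)|^2+|r_\pm(k)|^2=1$ for $k\neq 0$ (conservation of the Wronskian, see \cite{DT79}), which gives both $|t|\le 1$ and $|r_\pm|\le 1$ at once.

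For the high-energy derivatives in (a)(ii) I would differentiate the two identities in $k$. Differentiating $t(k)\inv$ produces $\frac{1}{2ik^2}\int V m_+ - \frac{1}{2ik}\int V\dot m_+$; since $|m_+(t,k)|\le c(1+\max(0,-t))$ and $|\dot m_+(t,k)|\le c(1+\max(0,-t))/|k|$ pair with $V\in L^1_1$, the two integrals are $O(1)$ and $O(1/|k|)$, so $\frac{d}{dk}t(k)\inv=O(1/k^2)$ and hence $\dot t=-t^2\frac{d}{dk}t\inv=O(1/k^2)$. For $r_\pm$ I write $r_\pm=(r_\pm t\inv)\,t$ and differentiate $r_\pm t\inv$; the only delicate term is the one in which $\partial_k$ hits the oscillatory factor $e^{\mp 2ikt}$, producing $\frac{\mp 1}{k}\int t\,e^{\mp 2ikt}V m_\mp\,dt$, which carries an extra weight $t$. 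To absorb it with only $V\in L^1_1$ I would first upgrade Lemma~\ref{l:der-m+} at high energy: rerunning the Volterra iteration for $m_\mp$ with $|h(u,k)|\le 1/|k|$ and Gronwall gives $\sup_x|m_\mp(x,k)|\le e^{\Vert V\Vert_1/|k|}\le C$ for $|k|\ge k_0$, so that $\int |t|\,|V|\,|m_\mp|\le C\Vert V\Vert_{L^1_1}$ and the term is $O(1/|k|)$. Combining, $\frac{d}{dk}(r_\pm t\inv)=O(1/|k|)$ and $\dot r_\pm=O(1/|k|)$.

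For the low-energy statements I use $t(k)=-2ik/W(k)$ with $W(k)=-2ik+\nu_0+\hat P(2k)$, $P\in L^1$. When $\nu=W(0)\neq 0$, continuity of $W$ gives $|W(k)|\ge |\nu|/2$ near $0$, whence $t(k)=O(k)$, proving (a)(iii). For (b) with $\nu\neq 0$ and $V\in L^1_1$ I exploit this damping: differentiating $t\inv$ again gives $\frac{d}{dk}t\inv=O(1/k^2)$ (the integrals stay bounded since $\int V m_+\to\nu$), and $\dot t=-t^2\frac{d}{dk}t\inv=O(k^2)O(1/k^2)=O(1)=O(1/k)$. For $\dot r_\pm$ I again factor $r_\pm=(r_\pm t\inv)t$; the borderline term $\frac1k\int t e^{\mp 2ikt}Vm_\mp$, now lacking a uniform bound on $m_\mp$, is controlled by the refined near-zero estimate $|m_\mp(t,k)|\lesssim \min(1+|t|,|k|\inv)$ (the growth of $m_\mp$ saturates at the size $1/|t(k)|\sim|k|\inv$), which yields $\int|t|\,|V|\,|m_\mp|\lesssim|k|\inv\Vert V\Vert_{L^1_1}$, and the factor $t=O(k)$ then brings $\dot r_\pm$ back to $O(1/k)$. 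In the remaining case $V\in L^1_2$ the weighted bounds of Lemma~\ref{l:B-L1}(b) make every integral $\int t^2|V|$ absolutely convergent, so one may differentiate directly (using Lemma~\ref{l:low-t-r}(b), under which $t=1+\hat f$, $r_\pm=\hat g_\pm$ and $t(0)\neq 0$ when $\nu=0$), and the bounds follow without any damping.

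The main obstacle is the low-energy derivative of $r_\pm$ in the resonance-free $L^1_1$ case: there the extra weight $t$ produced by differentiating $e^{\mp 2ikt}$ competes with the non-uniform growth of $m_\mp(\cdot,k)$ near $k=0$, and neither the crude bound of Lemma~\ref{l:der-m+} nor the high-energy Gronwall trick applies. The balance is restored only by combining the damping $t=O(k)$ with the $k$-dependent saturation bound on $m_\mp$; establishing that refinement, or alternatively rewriting $m_\mp$ through the relations of Lemma~\ref{l:t-m+m-}, is the technical heart of the argument, whereas the $L^1_2$ case is comparatively routine.
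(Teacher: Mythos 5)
Your proposal follows, in essence, the paper's own route for the substantive part (b). The paper likewise differentiates the identities of Lemma \ref{l:t-r-m-B}, writes $\dot t=-t^2\partial_k(t^{-1})$ so that the non-resonant damping $t(k)=O(k)$ absorbs the $O(1/k^2)$ bound on $\partial_k(t^{-1})$, and---exactly as you anticipated---invokes Lemma \ref{l:t-m+m-} to tame the term in which $\partial_k$ produces an extra weight $t$ against the growing factor $m_\mp$: your ``saturation bound'' $|m_\mp(x,k)|\lesssim\min(1+|x|,1/|k|)$ is precisely what that lemma yields, since on the bad half-line $|m_\mp(x,k)|\le\bigl(|r_\pm(k)|+1\bigr)\sup_x|m_\pm(x,\pm k)|/|t(k)|\lesssim 1/|k|$ when $\nu\neq 0$. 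So you correctly located and resolved the technical heart. The only cosmetic difference is that the paper phrases the computation through $\alpha_\pm=(1+r_\pm)t^{-1}$ and the subtracted kernel $h(\mp t,k)=(e^{\mp 2ikt}-1)/(2ik)$, which is regular at $k=0$, rather than through $r_\pm t^{-1}$ itself. Part (a) the paper does not prove at all (it cites Deift--Trubowitz and Weder); your unitarity and Gronwall arguments for it are standard and correct.

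The one soft spot is your $L^1_2$ case. The parenthetical route ``differentiate $t=1+\hat f$, $r_\pm=\hat g_\pm$ from Lemma \ref{l:low-t-r}(b)'' cannot work as stated: Wiener's lemma produces $f,g_\pm\in L^1$ with no control on first moments, so $\hat f,\hat g_\pm$ need not even be differentiable, let alone have $O(1/k)$ derivatives. What the paper actually does for $\dot t$ is integrate by parts in the explicit representation (\ref{e:t-inv-VB}) and then apply Lemma \ref{l:B-L1}(b), getting $|k\,\partial_k(t(k)^{-1})|\le c\int(1+|t|)^2|V(t)|\,dt$ and hence $\dot t=-t^2\partial_k(t^{-1})=O(1/k)$ from $|t(k)|\le 1$; for $\dot r_\pm$ one also needs the cancellation coming from $\nu=0$ and $t(0)\neq 0$ (e.g.\ $\int V m_\mp(\cdot,k)\,dt=2ik\bigl(1-t(k)^{-1}\bigr)=O(k)$) to kill the $1/k^2$ terms. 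Your main clause---that the $L^1_2$ weights make the differentiated integrals absolutely convergent so no damping is needed---is the right mechanism and is what the paper dismisses as ``routine,'' but it must be run on the identities of Lemma \ref{l:t-r-m-B} and (\ref{e:t-inv-VB}), not on the Wiener-lemma representations; with that substitution your argument closes.
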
  The asymptotics 
in Lemma \ref{l:der-m+} and
Lemma \ref{l:trm-asym-low} (a) are known, see \cite[\mbox{Lemma 1,} p.130]{DT79} or \cite{W99}. 
We will give the proof of Lemma \ref{l:trm-asym-low} (b) below. 

\subsection{Proof of Lemma \ref{l:trm-asym-low} (b)} 
{\bf A}. Let $V\in L^1_1$ and $\nu\neq 0$. By Lemma \ref{l:t-r-m-B} we have
 \begin{align*} 
t(k)^{-1}= 1-\frac{1}{2ik}\int V(t)m_+(t,k)dt .\end{align*}
 
Taking derivative in $k$ and applying Lemma \ref{l:der-m+}  give 
\begin{align*} 
|\partial_k (t(k)^{-1})| \le 
c/k^2\qquad \forall k\neq 0,
\end{align*}
thus, 
\begin{align}\label{e:der-t--no-reson} 
\dot{t}(k)=-t(k)^2\partial_k (t(k)^{-1}) =\begin{cases}O(1)& |k|<1\\ 
O(1/k^2)&|k|\ge 1 ,
\end{cases}
\end{align}
where we used $t(k)=O(k)$, $k\to 0$ if $\nu\neq 0$, by Lemma \ref{l:trm-asym-low} (a).
From  (\ref{e:al-rt}) and Lemma \ref{l:t-r-m-B} we see
\begin{align*}
{r}_\pm(k)= {t}(k)\big(1+ \int h(\mp t,k)V(t)m_\pm(t,k)dt\big)-1 .\end{align*}

Now the estimate \[ |\dot{r}_\pm(k)|\le c/|k|  \] can be established
by using (\ref{e:der-t--no-reson}), the estimates  
\begin{align*}
& |h(t,k)|\le \min (|t|, 1/ |k|) \quad   \\ 
& |\dot{h}(t,k)| 
\le 2\frac{|t| }{|k|} \,,  
\end{align*}   
 Lemma \ref{l:der-m+} and Lemma \ref{l:t-m+m-}. 

{\bf B}. Let $V\in L^1_2$ and $\nu=0$. By (\ref{e:t-inv-VB}), integrating by parts  
we have 
\begin{align*}
& k\pa_k(t(k)\inv)=-\int V(t)dt \int_0^\infty \big(\int_\xi^\infty B_+(t,\eta)d\eta\big)\xi d_\xi(e^{2ik\xi})\\  
=&\int V(t)dt 
\int_0^\iy e^{2ik\xi} \big(\int_\xi^\infty B_+(t,\eta)d\eta-\xi B_+(t,\xi)\big)d\xi , 
\end{align*} 
where note that $|\xi\int_\xi^\infty B_+(t,\eta)d\eta|\le\int_\xi^\infty |\eta B_+(t,\eta)|d\eta\to 0$ as $\xi\to \iy$
by virtue of Lemma \ref{l:B-L1} (b).
 Then Fubini theorem and again Lemma \ref{l:B-L1} (b) give \begin{align*}
& |k\pa_k(t(k)\inv)| 
\le\int |V(t)|dt \bigg(
\int_0^\iy\big(\int_\xi^\infty |B_+(t,\eta)|d\eta+\xi |B_+(t,\xi)|\big) d\xi\bigg)\\
\le& c\int (1+|t|)^2|V(t)|dt .   
\end{align*}
Thus \begin{align}\label{e:de-t-reson-low} 
\dot{t}(k)=-t(k)^2\partial_k (t(k)^{-1}) =O(1/k) . 
\end{align}
Finally, the estimate $\dot{r}_\pm(k)=O(1/k)$ follows from those routine asymptotics for $\dot{h}(t,k), \dot{m}_\pm(x,k)$ and 
(\ref{e:de-t-reson-low}). \hB

\begin{remark} The proof in Part A actually has shown the asymptotics for $t(k), r_\pm(k)$
for both $k\to 0$ and $|k|\to \iy$, for the latter we only require $V\in L^1_1$. \end{remark}

\section{Weighted $L^2$ estimates}\label{s:wei-L2-lowhi}  
We will prove (\ref{e:wei-L2-ineq})  for $H_{ac}$  with $s=1$ and $s=0$
(Lemma \ref{l:wei-L2-mu-lo} and Lemma \ref{l:j-ker-L2}). Then 
the case $1/2<s<1$ follows via interpolation. 
The case $s=1$ requires certain 
improved asymptotics for $k$-derivatives of $m_\pm(x,k)$ $t(k), r_\pm(k)$ than
 \cite[p.134]{DT79}. 
In the most difficult (and subtle) case (Case c) we use the Volterra type 
expansions for $\dot{m}_\pm(x,k)\;$\footnote{We will follow the convention that $\dot{f}(k)=\pa_kf(k)$.} in order to deal with the 
inconsistency of the weight and distorted phase.

In the following we use the abbreviations $H^s=W^s_2$, $\dot{H}^s=\dot{W}^s_2$,
and so, 
\begin{align}
 &\Vert f\Vert_{H^s(\R)}=\Vert (1+|\xi|^2)^{s/2}\hat{f}\Vert_2 \notag\\
 &\Vert f\Vert_{\dot{H}^s(\R)}=\Vert |\xi|^s\hat{f}\Vert_{2}\approx \Vert (-\De)^sf\Vert_2 \,. \label{e:Hs-homog}\end{align}
\begin{lemma}\label{l:wei-L2-mu-lo} Let $V\in L^1_1$, $\nu\neq 0$ or $V\in L^1_2$. 
Then for all $y$, $j\in\Z$ and $\phi\in H^1([\frac14,1])$, 
\begin{align}\label{e:w-L2} 
\Vert (x-y) \phi_j(H_{ac})(x,y)\Vert_{L^2_x} \le c2^{-j/4}\Vert \phi\Vert_{H^1([\frac14,1])} \,. 
\end{align} 
 \end{lemma}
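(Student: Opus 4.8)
The plan is to prove the weighted $L^2$ estimate \eqref{e:w-L2} for the kernel of $\phi_j(H_{ac})$ with the weight $(x-y)$, i.e.\ the case $s=1$ of \eqref{e:wei-L2-ineq}. Starting from the kernel formula \eqref{e:ker-phi-m},
\[
\phi_j(H_{ac})(x,y)=\frac{1}{2\pi}\int_{-\infty}^\infty \phi_j(\lambda^2)\,m_+(x,\lambda)m_-(y,\lambda)\,t(\lambda)\,e^{i\lambda(x-y)}\,d\lambda,
\]
I would view this as an inverse Fourier transform in the variable $\lambda$ evaluated at the point $x-y$. The factor $(x-y)$ should then be converted, via $(x-y)e^{i\lambda(x-y)}=-i\,\partial_\lambda e^{i\lambda(x-y)}$, into a $\lambda$-derivative falling on the amplitude. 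After integrating by parts, the Plancherel theorem in $\lambda$ will turn $\Vert(x-y)\phi_j(H_{ac})(x,y)\Vert_{L^2_x}$ into an $L^2_\lambda$ norm of the differentiated amplitude; the scaling factor $2^{-j/4}$ and the Sobolev norm $\Vert\phi\Vert_{H^1}$ should then emerge from the support $|\lambda|\approx 2^{j/2}$ of $\phi_j(\lambda^2)$ together with the chain-rule factor $2\lambda$ produced by $\dot\phi_j(\lambda^2)$.

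\textbf{The main estimate.}
First I would write $(x-y)\phi_j(H_{ac})(x,y)=\frac{-i}{2\pi}\int \partial_\lambda\big[\phi_j(\lambda^2)m_+(x,\lambda)m_-(y,\lambda)t(\lambda)\big]e^{i\lambda(x-y)}\,d\lambda$ after integration by parts (the boundary terms vanish since $\phi_j$ has compact support in $\lambda$). The $\lambda$-derivative distributes by the Leibniz rule into four terms: one hitting $\phi_j(\lambda^2)$, producing $2\lambda\,\dot\phi(\lambda^2)$ which carries the Sobolev derivative and is the principal term; and three hitting $m_+$, $m_-$, $t$ respectively, producing the factors $\dot m_+(x,\lambda)$, $\dot m_-(y,\lambda)$, $\dot t(\lambda)$. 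For each term I would apply Plancherel in $\lambda$ (for fixed $y$, integrating the square over $x$), using that $x-y$ ranges over $\R$ as $x$ does. The bounds needed are exactly those supplied earlier: the pointwise control $|m_\pm|\le c$, $|t|\le 1$ on the support $|\lambda|\approx 2^{j/2}$ from Lemma \ref{l:der-m+} and Lemma \ref{l:trm-asym-low}, together with the derivative asymptotics $|\dot m_\pm(x,\lambda)|\le c|\lambda|^{-1}$ (Lemma \ref{l:der-m+}) and $\dot t(\lambda)=O(1/\lambda)$ (Lemma \ref{l:trm-asym-low}(b)). On the support $|\lambda|\gtrsim 2^{j/2}$ these $1/\lambda$ factors gain $2^{-j/2}$, matching or beating the principal term's scaling.

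\textbf{Extracting the scaling and Sobolev norm.}
For the principal term the amplitude is $2\lambda\,\dot\phi(\lambda^2)\,m_+m_-t$, supported where $\lambda^2\in[\tfrac14,1]\cdot 2^j$, i.e.\ $|\lambda|\in[\tfrac12,1]\cdot 2^{j/2}$. Substituting $\lambda=2^{j/2}\eta$ in the Plancherel integral, the measure contributes $2^{j/2}$, the factor $\lambda$ contributes $2^{j/2}$, and the change of variables in $\dot\phi(\lambda^2)=\dot\phi(2^j\eta^2)$ should, after accounting for the $H^1$ norm of $\phi$, collect to give the claimed $2^{-j/4}$. Concretely the homogeneous derivative norm \eqref{e:Hs-homog} with $s=1$ is the right object: $\Vert\,|\lambda|\,\dot\phi(\lambda^2)\cdot(\cdots)\Vert_{L^2_\lambda}$ rescales to $2^{-j/4}\Vert\phi\Vert_{H^1}$ by a direct computation of the Jacobian. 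The three derivative-on-$m_\pm$-or-$t$ terms are controlled by the same scaling but with an extra $2^{-j/2}$ from the $1/\lambda$ bound, so they are lower order and absorbed into the constant.

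\textbf{The main obstacle.}
I expect the hard part to be the terms where the derivative lands on $m_\pm(x,\lambda)$, because $m_+(x,\lambda)$ depends on the spatial variable $x$ and its derivative bound $|\dot m_+(x,\lambda)|\le c(1+\max(0,-x))/|\lambda|$ grows linearly in $x$ on the half-line where the estimate is weakest. This is the ``inconsistency of the weight and distorted phase'' the author flags at the start of Section \ref{s:wei-L2-lowhi}, and it is why the refined Volterra-type expansions for $\dot m_\pm$ are invoked in the difficult Case c ($x<0,y<0$, or more precisely the mismatched-sign case). The resolution is presumably to recombine the growing factor with the oscillatory phase $e^{i\lambda(x-y)}$ so that an additional integration by parts or a cancellation in the Volterra series trades the spatial growth for further decay in $\lambda$, keeping the $L^2_x$ integral finite and the $2^{-j/4}$ scaling intact. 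Handling this case-by-case splitting (a) $x>0,y<0$, (b) $x>0,y>0$, (c) $x<0,y<0$, in parallel with the high/low energy structure already used for the $L^\infty$ estimates, is where the genuine technical work lies.
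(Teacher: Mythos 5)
There is a genuine gap, and it sits exactly at your ``main estimate'' step. The kernel \eqref{e:ker-phi-m} is \emph{not} of the form $F^\vee(x-y)$: the amplitude $m_+(x,\lambda)m_-(y,\lambda)t(\lambda)$ depends on the output variable $x$. Consequently ``apply Plancherel in $\lambda$, using that $x-y$ ranges over $\R$ as $x$ does'' is not a legitimate operation, neither for the principal term nor for the error terms: a pointwise bound such as $|m_+(x,\lambda)|\le c$ cannot be inserted under the integral and then combined with Plancherel, since Plancherel requires the exact oscillatory structure of an $x$-independent symbol. Moreover the pointwise bounds you quote are not what Lemma \ref{l:der-m+} gives: it gives $|m_\pm(x,k)|\le c(1+\max(0,\mp x))$ and $|\dot m_\pm(x,k)|\le c(1+\max(0,\mp x))/|k|$, so in your cases (b) and (c) one of the two factors $m_+(x,\cdot)$, $m_-(y,\cdot)$ necessarily sits on its ``bad'' half-line and grows linearly; the paper must first use the scattering relations of Lemma \ref{l:t-m+m-} (e.g.\ $t\,m_-=e^{2ikx}r_+m_++m_+(\cdot,-k)$) to rewrite the kernel entirely in terms of good-side functions, a step your proposal never invokes.

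The paper's actual mechanism, which your sketch is missing, is this: instead of differentiating $m_+(x,k)$ in $k$, one substitutes its Fourier representation $m_+(x,k)=1+\int_0^\infty B_+(x,u)e^{2iku}\,du$ (Lemma \ref{l:t-r-m-B}), so the entire $x$-dependence of that factor becomes a superposition of phase shifts $e^{2iku}$ with coefficients controlled uniformly by Lemma \ref{l:B-L1}(c), $|B_+(x,u)|\le e^{\gamma^+(x)}\rho^+(x+u)\le c\,\rho^+(u)$ for $x>0$. Minkowski's inequality then reduces matters to an $x$-independent amplitude $\psi_j\,t\,m_-(y,\cdot)$ with shifted phase $e^{i(x-y+2u)k}$, where Plancherel \emph{is} valid, and the weight is absorbed via the sign comparison $|x-y|\le|x-y+2u|$ (valid for $x>0$, $y<0$, $u>0$). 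Crucially, in Case c ($y<x<0$) this comparison fails for the term carrying the phase $e^{i(x-y)k}$ and factor $m_-(x,-k)$ (one would need $|x-y|\le|x+y-2u|$, which is false), and there the paper switches to a different device altogether: iterating the Volterra equation \eqref{e:m-hVm-} to expand $m_-(x,-k)=\sum_n M_n^-(x,k)$, integrating by parts termwise, and exploiting the explicit structure $h(x-t,-k)=\int_0^{x-t}e^{-2iku}du$ together with the $1/n!$ volume factors to sum the series as $e^{\Vert tV\Vert_1}$. Your closing paragraph correctly senses that this is where the difficulty lies, but ``recombine the growing factor with the oscillatory phase'' is not a proof step; without the $B_\pm$-representation, the Minkowski reduction, the weight--phase comparisons, and the Volterra expansion for the one resistant term, the argument does not close.
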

 \begin{proof} Let $\psi_j(k):=\phi_j(k^2)= \phi(2^{-j}k^2)$. 
 By symmetry we will only need to show (\ref{e:w-L2}) in the following three cases:
\begin{align*}
(\ref{e:w-L2}a)\;  \forall y<0,\;\; &\Vert \chi_{\{x>0\}}(x-y) \phi_j(H)(x,y)\Vert_{L^2_x}\le c2^{-j/4}\Vert \phi\Vert_{H^1([\frac14,1])} \\ 
(\ref{e:w-L2}b) \;\forall y>0,\;\; &\Vert \chi_{\{x>y\}}(x-y) \phi_j(H)(x,y)\Vert_{L^2_x}\le c2^{-j/4}\Vert \phi\Vert_{H^1([\frac14,1])} \\ 
(\ref{e:w-L2}c) \;\forall y<0,\;\; &\Vert \chi_{\{y<x<0\}}(x-y) \phi_j(H)(x,y)\Vert_{L^2_x}\le c2^{-j/4}\Vert \phi\Vert_{H^1([\frac14,1])} \,. 
\end{align*}

\nd{\bf Case a.}  $x>0$, $y<0$.   Using the formula in Lemma \ref{l:t-r-m-B}
\begin{align}\label{e:m+B+} 
m_+ (x,k)= 1+\int_0^\infty B_+(x,u)e^{2iku}du \,,\end{align} 
we write \begin{align*}
&2\pi \phi_j(H_{ac})(x,y)   
=  \int
\psi_j(k)t(k) m_-(y,k)e^{i(x-y)k} dk\\
+& \int
\psi_j(k)t(k)m_-(y,k)e^{i(x-y)k} dk\int_0^{\infty} B_+(x,u)e^{2iku}du\\
:=& I^a_1(x,y)+I^a_2(x,y) .
\end{align*}
In view of (\ref{e:Hs-homog}) 
we have \begin{align*}
&  \Vert (x-y)I^a_1(x,y)\Vert_{L^2_x}
= \Vert \psi_j(k) t(k) m_-(y,k) \Vert_{\dot{H}^1_k}
\le c2^{-j/4}\Vert \phi\Vert_{H^1([\frac14,1])} , 
\end{align*}
where we have used  the following estimates 
by Lemma \ref{l:der-m+} and \mbox{Lemma \ref{l:trm-asym-low}:} For $i=0,1$,
\[  \begin{cases} 
\Vert \pa^i_k\psi_j\Vert_{L^2}\le &c2^{-\frac{j}{2}(i-\frac12)}\Vert \phi\Vert_{H^1([\frac14,1])}\\
\pa_k^i{t}(k)=
& O(1/k^i) \\
\pa_k^i{m}_-(y,k)=&O(1/k^i) , \qquad \forall y<0.
\end{cases}
\]  
Applying Minkowski inequality and Lemma \ref{l:B-L1} (c) for $x>0$, 
we obtain by (\ref{e:Hs-homog}) that for each $y<0$,  
\begin{align*}
& \Vert \chi_{\{x>0\}}(x-y) I^a_2(x,y)\Vert_{L^2_x}\\
=& \Vert \int_0^{\infty} B_+(x,u)du\,  (x-y)\int \psi_j(k)t(k)m_-(y,k)e^{i(x-y+2u)k} dk \Vert_{L^2_{\{x>0\}}}\\
\le& \int_0^{\infty} \rho^+(u)du\,\Vert (x-y+2u)\int \psi_j(k)t(k)m_-(y,k)e^{i(x-y+2u)k} dk \Vert_{L^2_{\{x>0\}}}\\ 
\le& \int_0^{\infty} \rho^+(u)du\,\Vert \psi_j(k)t(k)m_-(y,k)\Vert_{\dot{H}^1_k}
\le c 2^{-j/4}\Vert \phi\Vert_{H^1([\frac14,1])}.
\end{align*} 
So combining the estimates for $I^a_1$ and $I^a_2$ gives (\ref{e:w-L2}a). 

\nd{\bf Case b.}   $x>y>0$. By Lemma \ref{l:t-m+m-} we write
\begin{align*}
&2\pi  \phi_j(H_{ac})(x,y)
= \int\psi_j(k)r_+(k) m_+(x,k)m_+(y,k)e^{i(x+y)k}dk \\
+&  \int  \psi_j(k) m_+(x,k) m_+(y,-k) e^{i(x-y)k}dk .
\end{align*}

By (\ref{e:m+B+}) 
we see that (\ref{e:w-L2}b) can be proved as in Case a by applying Lemma \ref{l:B-L1} (c), Lemma \ref{l:der-m+} and Lemma \ref{l:trm-asym-low}.

\nd{\bf Case c.}  $y<x<0$. Using Lemma \ref{l:t-m+m-} we write
\begin{align*}
&2\pi  \phi_j(H_{ac})(x,y)
=\int\psi_j(k)r_-(k) m_-(x,k)m_-(y,k)e^{-i(x+y)k}dk \\
+&  \int  \psi_j(k) m_-(x,-k) m_-(y,k) e^{i(x-y)k}dk
:= I^c_1(x,y)+I^c_2(x,y) .
\end{align*}

The term $I^c_1$ can be dealt with in a way similar to Case a or b. 
We can estimate $\Vert \chi_{\{y<x<0\}}(x-y)I^c_1(x,y)\Vert_2$ by writing
\[m_-(x,k)=1+\int_{-\infty}^0 B_-(x,u)e^{-2iku}du\]
(cf. Lemma \ref{l:t-r-m-B}, \cite[
p.137]{DT79} 
or \cite{W99})
and using Lemma \ref{l:B-L1} (c) and  the estimates for $m_-(y,k)$, $r_-(k)$ in Lemmas \ref{l:der-m+} and \ref{l:trm-asym-low},
where we have observed if $y<x<0$, then $|x-y|\le |x+y|$ and 
\[ |x-y|\le |x+y+2u|,   \qquad \forall u<0 . \]

For $I^c_2$ if following the same line one would have to require for all $y<x<0$ and $u<0$
 \[ |x-y|\le |x+y-2u| , \]   
which is unfortunately not valid. Here we proceed by exploiting the expansion of $m_-(x,-k)$ as follows.
Iterating (\ref{e:m-hVm-}) we write with $t_0=x$\begin{align*}
&m_-(x,-k)
=1+\sum_{n=1}^\iy \int_{-\infty}^{t_0} h(t_0-t_1,-k)V(t_1)dt_1\\
\times&\int^{t_1}_{-\iy} h(t_1-t_2,-k)V(t_2)dt_2\cdots
\int_{-\infty}^{t_{n-1}} h(t_{n-1}-t_n,-k)V(t_n)dt_n\\
:=&\sum_{n=0}^\iy M^-_{n}(x,k) .
\end{align*} 
Observe that 
\begin{align}
&h(x-t,-k)=\int_0^{x-t}e^{-2iku}du\label{e:h-green}\\
&\pa_k h(x-t,-k)
=\frac{1}{k} \left((x-t)e^{-2ik(x-t)}-\int_0^{x-t}e^{-2iku} du\right) . \label{e:der-h-green} 
\end{align} 

We have by integration by parts \begin{align*}
-i(x-y)I^c_2(x,y)
=&\sum_{n=0}^\iy
 \int\pa_k[\psi_j(k) m_-(y,k)M^-_{n}(x,k)] e^{i(x-y)k} dk\\ 
:=&\sum_{n=0}^\iy A^-_{n}(x,y) .  
\end{align*}

For $y<0$, $j\in\Z$ it is easy to see from Lemma \ref{l:der-m+} that 
\begin{align*}
\Vert A^-_0(x,y)\Vert_{L^2_x}=\Vert\psi_j(k)m_-(y,k)\Vert_{\dot{H}^1_k} \le c 2^{-j/4}\Vert \phi\Vert_{H^1([\frac14,1])} \,. 
\end{align*} 
For $n\ge 1$ using (\ref{e:h-green}) and exchanging order of integration give that
\begin{align*}
&A^-_{n}(x,y)
=\int_{-\iy}^{t_0}V(t_1)dt_1\int_{0}^{t_0-t_1}du_1\cdots \int_{-\infty}^{t_{n-1}}V(t_n)dt_n \int_0^{t_{n-1}-t_n}du_n\\
\times&\int \pa_k[\psi_jm_-(y,k)] e^{i(x-y-2u_1-\cdots-2u_n)k}dk\\
+&\int\psi_j(k) m_-(y,k) \pa_kM_n^-(x,k)e^{i(x-y)k} dk:=\Pi_1(x,y)+\Pi_2(x,y) , \end{align*}
where \begin{align*} &\pa_kM_n^-(x,k)\\
=&\int_{t_0>t_1>t_2>\cdots>t_n} \pa_k\bigg(h(t_0-t_1,-k)\cdots h(t_{n-1}-t_n,-k)\bigg)\\
\times& V(t_1)\cdots V(t_n)dt_1\cdots dt_n
:=J^-_{n,1}+J^-_{n,2}+\cdots+J^-_{n,n} \,,
\end{align*}
 $J^-_{n,i}$ denoting the integral involving $\pa_kh(t_{i-1}-t_i,-k)$, $i=1,\dots,n$.

We estimate by Minkowski inequality 
\begin{align*} &\Vert \chi_{\{y<x<0\}}\Pi_1(x,y)\Vert_{L^2_x}\le \int_{-\iy}^{0}|V(t_1)|dt_1\int_{0}^{-t_1}du_1\cdots \int_{-\infty}^{t_{n-1}}|V(t_n)|dt_n \int_0^{-t_n}du_n\\
\times&\Vert \int \pa_k[\psi_jm_-(y,k)] e^{i(x-y-2u_1-\cdots-2u_n)k}dk\Vert_{L_x^2}\\
\le&c2^{-j/4}\Vert \phi\Vert_{H^1([\frac14,1])}\int_{-\iy}^{0}(-t_1)|V(t_1)|dt_1\cdots \int_{-\infty}^{t_{n-1}}(-t_n)|V(t_n)|dt_n\\
\le&c2^{-j/4}\Vert \phi\Vert_{H^1([\frac14,1])}\frac{(\Vert tV\Vert_1)^n}{n!} \,. 
\end{align*}
For $\Pi_2$ we estimate the first term by using (\ref{e:der-h-green}), Minkowski inequality and Plancherel theorem 
to obtain
\begin{align*}
 &\Vert \chi_{\{y<x<0\}}\int\psi_j(k)m_-(y,k) J^-_{n,1}(x,k)e^{i(x-y)k}dk\Vert_{L^2_x}\\
\le&\int_{-\infty}^{0} 
(-t_1)|V(t_1)|dt_1\int^{t_1}_{-\infty}|V(t_2)|dt_2\int_0^{t_1-t_2}du_2\cdots
 \int_{-\iy}^{t_{n-1}}|V(t_n)|dt_n\int_0^{t_{n-1}-t_n}du_n\\
 &\Vert\int\frac{\psi_j(k)}{k}m_-(y,k) e^{-i(x+y-2t_1+2u_2\cdots+2u_n)k} dk\Vert_{L^2_x}\\
+&\int_{-\infty}^{0} |V(t_1)|dt_1\int_0^{-t_1}du_1\int^{t_1}_{-\infty}|V(t_2)|dt_2\int_0^{t_1-t_2}du_2\cdots
 \int_{-\iy}^{t_{n-1}}|V(t_n)|dt_n\int_0^{t_{n-1}-t_n}du_n\\
&\Vert\int\frac{\psi_j(k)}{k}m_-(y,k) e^{i(x-y-2u_1-2u_2\cdots-2u_n)k}dk \Vert_{L^2_x} \\
\le& c
 2^{-j/4} \Vert \phi\Vert_{L^2([\frac14,1])}
 \frac{(\Vert tV\Vert_{1})^n}{n!} \,.
\end{align*}
The same estimate holds for other terms involving $J^-_{n,i}$, $i=2,\dots,n$.
And so, \begin{align*} \Vert \chi_{\{y<x<0\}}A_n^-(x,y)\Vert_2
\le c 2^{-j/4} 
 (1+n)\frac{(\Vert tV\Vert_{1})^n}{n!}\Vert \phi\Vert_{H^1([\frac14,1])} \,.
\end{align*}
It follows that for all $j\in\Z$ \begin{align*} \Vert \chi_{\{y<x<0\}}(x-y)I^c_2(x,y)\Vert_2
\le c
2^{-j/4}e^{\Vert tV\Vert_{1}}\Vert\phi\Vert_{H^1([\frac14,1])} \,,
\end{align*}
which proves (\ref{e:w-L2}c). 
\end{proof}

\begin{lemma}\label{l:j-ker-L2} Let $V\in L^1_1$. Then for all $y$, $j\in\Z$ and $\phi\in L^2([\frac14,1])$
\begin{align*} 
&\Vert \phi_j(H_{ac})(x,y)\Vert_{L^2_x} \le 
 c 2^{j/4}\Vert \phi\Vert_{L^2([\frac14,1])} \,.\label{e:kerj-L2} 
\end{align*}
\end{lemma}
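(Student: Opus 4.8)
The plan is to run the same three-case decomposition as in Lemma~\ref{l:wei-L2-mu-lo}, but since there is now no weight $(x-y)$, the estimate collapses to a Plancherel identity in the variable $k$ together with a Minkowski-inequality bookkeeping of the Fourier tails $B_\pm$. Starting from the kernel formula (\ref{e:ker-phi-m}) with $\psi_j(k)=\phi_j(k^2)=\phi(2^{-j}k^2)$,
\[
\phi_j(H_{ac})(x,y)=\frac{1}{2\pi}\int \psi_j(k)\,m_+(x,k)m_-(y,k)t(k)\,e^{ik(x-y)}\,dk,
\]
the essential scaling input is the elementary computation, via the substitution $\xi=2^{-j}k^2$,
\[
\Vert\psi_j\Vert_{L^2_k}^2=\int|\phi(2^{-j}k^2)|^2\,dk=2^{j/2}\int_0^\iy|\phi(\xi)|^2\xi^{-1/2}\,d\xi\le c\,2^{j/2}\Vert\phi\Vert_{L^2([\frac14,1])}^2,
\]
where the last step uses $\supp\phi\subset[\frac14,1]$, on which $\xi^{-1/2}$ is bounded; this already produces the target power $2^{j/4}$.

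By symmetry I would reduce to the cases (a) $x>0,y<0$, (b) $x>y>0$, (c) $y<x<0$, exactly as in Lemma~\ref{l:wei-L2-mu-lo}. In Case~(a) I expand $m_+(x,k)=1+\int_0^\iy B_+(x,u)e^{2iku}du$ as in (\ref{e:m+B+}) and split $2\pi\phi_j(H_{ac})(x,y)=I_1^a+I_2^a$. The term $I_1^a$ is the Fourier transform in $k$ of $\psi_j(k)t(k)m_-(y,k)$ evaluated at $x-y$, so Plancherel gives $\Vert I_1^a\Vert_{L^2_x}=c\Vert\psi_j\,t\,m_-(y,\cdot)\Vert_{L^2_k}\le c\,2^{j/4}\Vert\phi\Vert_{L^2}$, using the uniform bounds $|t(k)|\le1$ (Lemma~\ref{l:trm-asym-low}(a)) and $|m_-(y,k)|\le c$ for $y<0$ (Lemma~\ref{l:der-m+}). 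For $I_2^a$ I would apply Minkowski's inequality in $u$, bound $|B_+(x,u)|\le c\,\rho^+(u)$ for $x>0$ via Lemma~\ref{l:B-L1}(c) (noting $\gamma^+(x)\le\Vert V\Vert_{L^1_1}$ and that $\rho^+$ is decreasing), and observe that the inner $x$-integral is again a translate of the same Fourier transform; this yields $\Vert I_2^a\Vert_{L^2_x}\le c\int_0^\iy\rho^+(u)\,du\;2^{j/4}\Vert\phi\Vert_{L^2}$ with $\int_0^\iy\rho^+(u)\,du=\int_0^\iy t|V(t)|\,dt\le\Vert V\Vert_{L^1_1}$.

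Cases~(b) and (c) follow the identical pattern after rewriting the kernel by Lemma~\ref{l:t-m+m-}: in (b) one uses $|r_+|\le1$ and the boundedness of $m_+(y,k)$ for $y>0$, expanding $m_+(x,k)$; in (c) one uses $|r_-|\le1$ and expands $m_-(x,k)=1+\int_{-\iy}^0 B_-(x,u)e^{-2iku}du$, with $|B_-(x,u)|\le c\,\rho^-(u)$ for $x<0$ and $\int_{-\iy}^0\rho^-(u)\,du\le\Vert V\Vert_{L^1_1}$. In each term the leading ``$1$'' contributes a pure Plancherel factor $2^{j/4}\Vert\phi\Vert_{L^2}$ (the phases $e^{\pm i(x\pm y)k}$ being harmless, as the corresponding change of variable in $x$ preserves the $L^2_x$-norm), while each $B_\pm$-tail is absorbed by Minkowski exactly as the term $I_2^a$ in Case~(a).

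I expect this lemma to be substantially easier than Lemma~\ref{l:wei-L2-mu-lo}, so I do not foresee a genuine obstacle. The delicate Volterra expansion of $m_-(x,-k)$ required there in Case~(c) is unnecessary here, precisely because the absence of the weight removes the mismatch between $|x-y|$ and the distorted phase $e^{i(x-y)k}$; a single Plancherel identity suffices. Let me also note that no resonance hypothesis is needed: only the $L^\iy$ bounds $|t|,|r_\pm|\le1$ and $|m_\pm|\le c$ enter the argument, never their $k$-derivatives, so $V\in L^1_1$ alone is enough.
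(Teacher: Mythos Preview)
Your proposal is correct and follows essentially the same route the paper sketches: the three-case decomposition of Lemma~\ref{l:wei-L2-mu-lo}, Plancherel in $k$ for the leading ``$1$'' terms, and Minkowski to absorb the $B_\pm$-tails, with the Volterra expansion in Case~(c) now unnecessary. The only cosmetic difference is that the paper cites Lemma~\ref{l:B-L1}~(a) (the $L^1$-bound $\int|B_\pm(x,u)|\,du\le c$ for $\pm x>0$) rather than the pointwise bound (c) you invoke; either suffices here since the uniform-in-$x$ control you extract from (c) integrates to the same $L^1$ estimate.
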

The proof is straightforward and follows the same line as in the weighted case $s=1$
but much simpler, where we only need Lemma \ref{l:B-L1} (a) and the following asymptotics 
 in Lemma \ref{l:der-m+} and Lemma \ref{l:trm-asym-low}: 
If $V\in L^1_1$,  then
\[  \begin{cases} 
|m_\pm(y,k)|\le&c(1+\max(0,\mp y))\\
t(k)=&O(1)  \\
r_+(k)=&O(1) .
\end{cases}\]  
We omit the details.  

\vs{.23in}
\nd{\bf Aknowledgment.} The  author would like to thank  the referee for careful reading and 
kind suggestions on the original manuscript. 

\end{document}